\flushbottom \setlength{\parindent}{16pt}
\title{Generalizing a family of scattered quadrinomials in $\mathbb{F}_{q^{2t}}[X]$}
\author{Alessandro Giannoni, Giovanni Giuseppe Grimaldi,\\
Giovanni Longobardi, Marco Timpanella}
\date{}
\DeclareMathOperator{\Tr}{Tr}
\DeclareMathOperator{\N}{N}
\DeclareMathOperator{\PG}{{PG}}
\DeclareMathOperator{\GL}{{GL}}
\DeclareMathOperator{\GaL}{\Gamma L}
\DeclareMathOperator{\im}{im}
\begin{document}
\maketitle

\newtheorem{theorem}{Theorem}[section]
\newtheorem{lemma}[theorem]{Lemma}
\newtheorem{thm}[theorem]{Theorem}
\newtheorem{conj}[theorem]{Conjecture}
\newtheorem{remark}[theorem]{Remark}
\newtheorem{cor}[theorem]{Corollary}
\newtheorem{prop}[theorem]{Proposition}
\newtheorem{definition}[theorem]{Definition}
\newtheorem{result}[theorem]{Result}
\newtheorem{property}[theorem]{Property}
\newtheorem{conjecture}[theorem]{Conjecture}
\newtheorem{question}[theorem]{Question}

\makeatother
\newcommand{\Prf}{\noindent{\bf Proof}.\quad }
\renewcommand{\labelenumi}{(\alph{enumi})}
\newcommand{\la}{\langle}
\newcommand{\ra}{\rangle}
\newcommand{\G}{\mathrm{\Gamma L}}


\def\B{\mathbf B}
\def\C{\mathbf C}
\def\Z{\mathbf Z}
\def\Q{\mathbf Q}
\def\W{\mathbf W}
\def\a{\mathbf a}
\def\b{\mathbf b}
\def\c{\mathbf c}
\def\d{\mathbf d}
\def\e{\mathbf e}
\def\l{\mathbf l}
\def\v{\mathbf v}
\def\w{\mathbf w}
\def\x{\mathbf x}
\def\y{\mathbf y}
\def\z{\mathbf z}
\def\t{\mathbf t}
\def\cD{\mathcal D}
\def\cC{\mathcal C}
\def\cH{\mathcal H}
\def\cM{{\mathcal M}}
\def\cK{\mathcal K}
\def\cQ{\mathcal Q}
\def\cU{\mathcal U}
\def\cS{\mathcal S}
\def\cT{\mathcal T}
\def\cR{\mathcal R}
\def\cN{\mathcal N}
\def\cA{\mathcal A}
\def\cF{\mathcal F}
\def\cL{\mathcal L}
\def\cP{\mathcal P}
\def\cG{\mathcal G}
\def\cGD{\mathcal GD}
\def\qs{{q^s}}
\def\Fq{{\mathbb{F}_q}}
\def\F{{\mathbb{F}}}
\newcommand{\Fqn}{\F_{q^n}}
\newcommand{\Fqdd}{\F_{q^d}}
\newcommand{\Fqt}{{\F_{q^t}}}

\def\GF{{\rm GF}}
\def\rk{{\rm rk}}

\def\Pg{PG(5,q)}
\def\pg{PG(3,q^2)}
\def\ppg{PG(3,q)}
\def\HH{{\cal H}(2,q^2)}
\def\F{\mathbb F}
\def\Ft{\mathbb F_{q^t}}
\def\P{\mathbb P}
\def\V{\mathbb V}
\def\bS{\mathbb S}
\def\E{\mathbb E}
\def\N{\mathbb N}
\def\K{\mathbb K}
\def\D{\mathbb D}
\def\ps@headings{
 \def\@oddhead{\footnotesize\rm\hfill\runningheadodd\hfill\thepage}
 \def\@evenhead{\footnotesize\rm\thepage\hfill\runningheadeven\hfill}
 \def\@oddfoot{}
 \def\@evenfoot{\@oddfoot}
}
\def\cub{\mathscr C}
\def\cO{\mathcal O}
\def\cur{\mathscr L}
\def\Fqm{{\mathbb F}_{q^m}}
\def\Fq3{{\mathbb F}_{q^3}}
\def\fq{{\mathbb F}_{q}}
\def\Fm{{\mathbb F}_{q^m}}

\begin{abstract}
\noindent In recent years, several efforts have focused on identifying new families of scattered polynomials. Currently, only three families in $\F_{q^n}[X]$ are known to exist for infinitely many values of $n$ and $q$:  (i) pseudoregulus-type monomials, (ii) Lunardon–Polverino-type binomials, and (iii) a family of quadrinomials studied in a series of papers.
In this work, we provide sufficient conditions under which these quadrinomials, denoted by $\psi_{m,h,s}$, are scattered. Our results both include and generalize those obtained in previous studies. We also investigate the equivalences between the previously known families of scattered polynomials and those in this new class.

\end{abstract}

\noindent \thanks{\textbf{2020 MSC:} 11T06, 11T71, 94B05}

\vspace{0.4cm}

\noindent \thanks{{\textbf{Keywords:}   Linearized polynomials; MRD codes; Scattered polynomials;}

\section{Introduction}

Let $\mathbb{F}_{q^n}$ be the finite field with $q^n$ elements, where $n \geq 2$ and $q$ a prime power and consider $V=\mathbb{F}_{q^n}^r$ an $r$-dimensional vector space over $\mathbb{F}_{q^n}$. If $U$ is an $\F_q$-vector subspace of $V$ of dimension $u$, then  the set of points
$$
L=L_U=\{ \langle \textbf{u} \rangle_{\mathbb{F}_{q^n}} : \textbf{u} \in U \setminus \{ \textbf{0}\} \}
$$
in the projective space $\mathrm{PG}(r-1,q^n)=\mathrm{PG}(V, \mathbb{F}_{q^n})$ is called an \textit{$\mathbb{F}_q$-linear set} of \textit{rank $u$}. If $u=r$ and $\langle L_U \rangle=\PG(r-1,q^n)$, we will call $L_U$ a \textit{canonical subgeometry}. The size of $L$ can be at most $(q^u-1)/(q-1)$. If this number is achieved, the linear set is called \textit{scattered} and this is equivalent to say that
\begin{equation}\label{scattsub}
\dim_{\mathbb{F}_q} (U \cap \langle \textbf{v} \rangle_{\mathbb{F}_{q^n}}) \leq 1 \,\,\, \textrm{for any} \,\,\, \textbf{v} \in V.
\end{equation}
If condition \eqref{scattsub} holds, then $U$ is called \textit{$\F_q$-scattered subspace}. By \cite{BlockLaw}, any scattered linear set of $\mathrm{PG}(r-1,q^n)$ has rank at most $rn/2$, and when the rank attains the largest possible value both the linear set and the underlying $\F_q$-subspace are called \textit{maximum}. Two linear sets $L_U$ and $L_W$ of $\mathrm{PG}(r-1,q^n)$ are \textit{$\mathrm{P \Gamma L}$-equivalent} if there exists $\Phi \in \mathrm{P \Gamma L}(r,q^n)$ such that $L_U^{\Phi}=L_W$, and we will write $L_U \cong L_W$. It is clear that if $U$ and $W$ are two $\F_q$-subspaces in the same orbit under the action of the group $\mathrm{\Gamma L}(r,q^n)$, then the linear sets $L_U$ and $L_W$ are equivalent. On  the other hand, it is possible that $L_U=L_W$, but  $U$ and $W$ are not in the same $\GaL(r,q^n)$-orbit. For instance, if $1<s < n-1$ and $\gcd(s,n)=1$, then the $\F_q$-subspaces 
\[U=\{ (x,x^q):x \in \F_{q^n} \} \quad \text{ and } \quad  W=\{ (x,x^{q^s}): x \in \F_{q^n}\}\] are not $\mathrm{\Gamma L}$-equivalent, but they define the same linear set $L=\{ \langle (1,x^{q-1}) \rangle_{\F_{q^n}}: x \in \F_{q^n}^*\}$, see \cite{CsajbokZanella2016}.

In this article, we focus on the linear sets of the projective line $\PG(1,q^n)$. 
In the following, we introduce some notions in order to make the article as self-contained as possible and to show the connection of these objects with coding theory. 
For more details on the relation between MRD codes, scattered subspaces and linear sets, see \cite{survey, standard, PolZu, SheMRD} and the reference therein.\\ 

Throughout this paper, we will denote the \textit{trace} and the \textit{norm} functions of $ \F_{q^n}$ over $\F_{q^d}$, with $d \mid n$, by
$$\Tr_{q^n/q^d}(x) = \sum^{n/d-1}_{i=0} x^{q^{id}} \quad \textnormal{ and } \quad  \mathrm{N}_{q^n/q^d}(x) =x^\frac{q^n-1}{q^d-1},$$
respectively. For more details on their properties, see \cite[Chapter~2]{Lidl}.\\
Let $s$ be a non-negative integer such that $\gcd(s,n)=1$. A \textit{$q^s$-linearized polynomial over $\F_{q^n}$} is a polynomial in $\F_{q^n}[X]$ having the form
$$
f=\sum_{i=0}^{\ell} a_i X^{q^{si}}.
$$
 If $a_{\ell} \ne 0$, then the integer $\ell$ is called the \textit{$q^s$-degree} of $f$. The \textit{adjoint polynomial} of $f$ is defined as 
$$
\hat{f}= \sum_{i=0}^{\ell} a_i^{q^{s(n-i)}}X^{q^{s(n-i)}}.
$$ 
Let
 $$
 \cL_{n,q,s}=\left\{ \sum_{i=0}^{k} a_i X^{q^{si}} : a_i \in \F_{q^n}, \,\, k \in \mathbb{N} \right\}
 $$
be the set of $q^s$-linearized polynomials defined over $\F_{q^n}$. It is well known that 
 $$
 \tilde{\cL}_{n,q,s}:= \left\{ \sum_{i=0}^{n-1} a_i X^{q^{si}} : a_i \in \F_{q^n} \right\}
 $$
 equipped with the sum, the map composition  modulo $X^{q^{sn}}-X$ and the scalar multiplication by an element in $\F_{q}$ is an $\F_q$-algebra isomorphic to the set   $\mathrm{End}_{\F_q}(\F_{q^n})$ of all endomorphisms of $\F_{q^n}$ seen as an $\F_q$-vector space, see \cite[Chapter 3]{Lidl}. Given $f=\sum_{i=0}^{n-1}a_iX^{q^{si}} \in \tilde{\cL}_{n,q,s}$,
 we will denote by $f(x)$ the  map $x \in \F_{q^n} \longmapsto \sum_{i=0}^{n-1}a_i x^{q^{si}} \in \F_{q^n}$ of $\mathrm{End}_{\F_q}(\F_{q^n})$.

A $q^s$-linearized polynomial $f$ is \textit{scattered} if the polynomial $f+mX$ has at most $q$ roots in $\F_{q^n}$ for any $m \in \F_{q^n}$, and this is equivalent to saying that for any $y,z \in \F_{q^n}^*$ satisfying
\begin{equation}\label{scattpol}
\frac{f(y)}{y}=\frac{f(z)}{z}
\end{equation}
then $y,z$ must be $\F_q$-linearly dependent. Note that if $f \in \tilde{\cL}_{n,q,s}$ is scattered then also its adjoint $\hat{f}$ is scattered.

Up to a collineation of $\PG(1,q^n)$, we can always assume that a linear set $L_U \subset \PG(1,q^n)$ does not contain the point $\langle (0,1) \rangle_{\F_{q^n}}$ and so the subspace $U$ has the shape
$$
U=U_{f}=\{ (x,f(x)) : x \in \F_{q^n} \}
$$
for some $q^s$-linearized polynomial $f$. For this reason, the linear set $L_U$ will be also denoted by $L_{f}$. Note that $L_{f}$ is a scattered linear set if and only if $f$ is a scattered polynomial. 

Two $q^s$-linearized polynomials $f$ and $g$ are \textit{$\mathrm{\Gamma L}$-equivalent} (resp. \textit{$\mathrm{GL}$-equivalent}) if the subspaces $U_f$ and $U_g$ are in the same $\mathrm{\Gamma L}(2,q^n)$-orbit (resp. $\mathrm{GL}(2,q^n)$-orbit). In this case, we will write $f\sim_{\mathrm{\Gamma L}}g$ (resp. $f \sim_{\mathrm{GL}} g$).

\medskip
A \textit{rank-metric code} is a subset $\cC$ of the $\F_q$-vector space $\F_q^{m \times n}$ of the matrices with entries over $\F_q$ containing at least two elements and equipped with the \textit{rank distance}, defined as
$$
d(A,B)=\mathrm{rk}(A-B) \,\,\, \mathrm{for \,\,  any} \,\,\, A,B \in \F_{q}^{m \times n}.
$$
The \textit{minimum distance} $d(\cC)$ of $\cC$ is defined as
$$
d=d(\cC)=\min_{\underset{A \neq B}{A,B \in \cC}} d(A,B).
$$
The code $\cC$ will be called a rank-metric code, or for short \textit{RM code}, with parameters $(m,n,q;d)$. An RM code is \textit{linear} if it is an $\F_q$-subspace of $\F_{q}^{m \times n}$. It is well known that the size of a rank-metric code satisfies the \textit{Singleton-like bound} \cite[Theorem 5.4]{Singleton}, i.e.,
$$
|\cC| \leq q^{\max\{m,n\}(\min\{m,n\}-d+1)}.
$$
When this bound is attained, the code $\cC$ is called a \textit{maximum rank distance} code, or shortly \textit{MRD} code. Two linear rank-metric codes $\cC, \cC' \subseteq \F_{q}^{m \times n}$ are \textit{equivalent} if there exist $A \in \mathrm{GL}(m,q)$, $B \in \mathrm{GL}(n,q)$ and $\rho \in \mathrm{Aut}(\F_q)$ such that
$$
\cC'=A \cC^{\rho} B:= \{ AC^{\rho}B: C \in \cC \}.
$$
Since there is an isomorphism between $\tilde{\cL}_{n,q,s}$ and $\mathrm{End}_{\F_q}(\F_{q^n})$, a rank-metric code made up of $n \times n$ matrices can be seen as a subset of $\tilde{\cL}_{n,q,s}$. In this setting, the minimum distance of $\cC \subseteq \tilde{\cL}_{n,q,s}$ is defined as
$$
\min_{\underset{f,g \in \cC}{f \neq g}} \dim_{\F_q} \mathrm{im} ((f - g)(x)),
$$
and the equivalence between $\cC, \cC' \subseteq \tilde{\cL}_{n,q,s}$ can be stated as follows: there are $g,h \in \tilde{\cL}_{n,q,s}$ permutation polynomials and $\rho \in \mathrm{Aut}(\F_q)$ such that
$$
\cC'=g \circ \cC^{\rho} \circ h:=\{ g \circ f^{\rho} \circ h : f \in \cC \},
$$
where the automorphism $\rho$ acts only on the coefficients of the polynomial $f$. In general, it is rather difficult to determine whether two rank-metric codes are equivalent or not. For this reason, it is useful to introduce the  \textit{left} and \textit{right idealizers} of a rank-metric code $\cC \subseteq \tilde{\cL}_{n,q,s}$, that is
$$
I_{L}(\cC)=\{ g \in \tilde{\cL}_{n,q,s} \colon \, g \circ f \in \cC \quad \forall f \in \cC \}
$$
and 
$$I_{R}(\cC)=\{ h \in \tilde{\cL}_{n,q,s} \colon \, f \circ h \in \cC \quad \forall f \in \cC \},
$$
respectively. As shown in \cite[Proposition 4.1]{LTZ}, left and right idealizers are invariant under equivalence. Moreover, if $\cC$ is a linear MRD code, they are isomorphic to subfields of $\F_{q^n}$; see \cite{survey}.

Given $f \in \tilde{\cL}_{n,q,s}$, we can consider the subspace of $q^s$-linearized polynomials
$$
\cC_{f}=\{ aX+bf : a,b \in \F_{q^n} \}= \langle X, f \rangle_{\F_{q^n}}.
$$
It is easy to show that $\cC_{f}$ is a linear MRD code with minimum distance $n-1$ if and only if $f$ is a scattered polynomial. Moreover,  a code $\cC_f$ is an $\F_{q^n}$-subspace of $\tilde{\mathcal{L}}_{n,q,s}$ and the left idealizer  is 
$$I_L(\cC_{f})= \{ \alpha X \colon \alpha \in \F_{q^n} \}.$$
In \cite{JShe}, the author proved that given $f$ and $g$ two scattered linearized polynomials, the MRD codes $\cC_{f}$ and $\cC_{g}$ are equivalent if and only if $f \sim_{\GaL} g$.\\
Let  $f$ be a scattered polynomial and consider $G_f$ the stabilizer in $\mathrm{GL}(2,q^n)$ of the scattered subspace $U_f \subseteq \F_{q^n} \times \F_{q^n}$. Then, $G^{\circ}_f = G_f  \cup \{O\}$, where $O$ is the null matrix of order $2$, is a subfield of matrices isomorphic to $I_R(\cC_f)$ as field, \cite[Proposition 2.2]{survey}.\\

In literature, up to now, there are only three families of maximum scattered linear sets of the projective line $\PG(1,q^n)$ for infinitely many $n$ and $q$:
\begin{itemize}
    \item [$(i)$]  $f_{1,s}:=X^{q^s}$, known as \textit{pseudoregulus type}, where $1 \leq  s \leq  n-1$ and $\gcd(s,n) = 1$, see \cite{BlockLaw}. In this case we have:
    $$
    G_{f_{1,s}}^{\circ}=\left\{ \begin{pmatrix} \alpha & 0 \\ 0 & \alpha^{q^s}  \end{pmatrix} : \alpha \in \F_{q^n}\right\} \,\,\, \mathrm{and} \,\,\, I_R(\mathcal{C}_{f_{1,s}})=\{ \alpha X : \alpha \in \F_{q^n} \},
    $$
    see \cite{Cjabok}.
    \item [$(ii)$] $f_{2,s}:=X^{q^s} + \delta X^{q^{s(n-1)}}$, known as \textit{Lunardon-Polverino (LP) type}, where $n \geq 4$, $\mathrm{N}_{q^n/q}(\delta)\notin \{0,1\} $, $1 \leq s \leq n-1$  and $\gcd(s,n) = 1$, see \cite{LunPol,SheMRD}. Then,
     $$
    G_{f_{2,s}}^{\circ}=\left\{ \begin{pmatrix} \alpha & 0 \\ 0 & \alpha^{q^s}  \end{pmatrix} : \alpha \in \F_{q^{\gcd(2,n)}}\right\} \,\,\, \mathrm{and} \,\,\, I_R(\mathcal{C}_{f_{2,s}})=\{ \alpha X : \alpha \in \F_{q^{\gcd(2,n)}} \},
    $$
    see \cite{translation, Cjabok}.
    \item [$(iii)$]  
  \begin{equation}\label{quad}
      \psi_{m,h,s}:=  m \left (X^{q^s}-h^{1-q^{s(t+1)}}X^{q^{s(t+1)}} \right )+X^{q^{s(t-1)}}+h^{1-q^{s(2t-1)}}X^{q^{s(2t-1)}} \in \tilde{\mathcal{L}}_{n,q,s}, 
      \end{equation}
      where $q$ is an odd prime power, $t \geq 3$, $n=2t$, $\gcd(s,n)=1$, and $(m,h) \in \F_{q^t} \times \F_{q^{2t}}$ are such that 
      \begin{itemize}
    \item $m=1$ and $h \in \F_{q^t}$ with $h^2=-1$, see \cite{Bartoli_Zanella_Zullo,longobardi_zanella, neri_santonastaso_zullo,Zanella-Zullo}. Note that $h^2=-1$ implies that $h \in  \F_{q^{\gcd(t,2)}}$. So, if $t$ is odd, $q \equiv 1 \pmod 4$.

    \item $m=1$ and $h \in \F_{q^{2t}} \setminus \F_{q^t}$ with $\mathrm{N}_{q^{2t}/q^t}(h)=-1$, see \cite{Bartoli_Zanella_Zullo, longobardi_marino_trombetti_zhou,neri_santonastaso_zullo}.

    \item $h \in \F_q$ and $m \in \F_{q^t}$ such that it is neither a $(q+1)$-th nor $(q-1)$-th power of an element belonging to $\ker\, \mathrm{Tr}_{q^{2t}/q^t}$, see \cite{SmaZaZu}.
\end{itemize}
\end{itemize}

In this paper we will study some sufficient conditions for the polynomials $\psi_{m,h,s}$ in \eqref{quad} to be scattered. Our results will include and generalize those obtained in previous works, see \cite{Bartoli_Zanella_Zullo,longobardi_marino_trombetti_zhou,longobardi_zanella,  neri_santonastaso_zullo,SmaZaZu,Zanella-Zullo}. 
More precisely, define 
\begin{equation}\label{p+p-}
  \mathscr{P}^+_{s}= \left \{w^{q^s+1} \in \F_{q^t}  \colon w \in \ker \mathrm{Tr}_{q^{2t}/q^t} \right \} \quad \textnormal{ and } \quad \mathscr{P}^{-}_{s}=\left \{w^{q^s-1} \in \F_{q^t}  \colon w \in \ker \mathrm{Tr}_{q^{2t}/q^t} \right \}.
\end{equation}

Our main theorem is the following.

 \begin{thm}\label{main}
     Let $t \geq 3$, $q$ an odd prime power, $1 \leq s \leq 2t-1$ an integer such that $\gcd(s,2t)=1$, and $(m,h) \in \F_{q^t}^* \times \F_{q^{2t}}$. The polynomial $\psi_{m,h,s}$
 is scattered under one of the following assumptions:
 \begin{itemize}
     \item [$i)$] $t$ is even, or $t$ is odd and $q \equiv 1 \pmod 4$, and \begin{equation*}
            m \in \mathbb{F}_{q^t} \setminus (\mathscr{P}^+_{s} \cup \mathscr{P}^-_{s}) \quad \text{ with } \quad \mathrm{N}_{q^{2t}/q^t}(h)=\pm 1;
    \end{equation*}
     \item [$ii)$] $t$ is odd, $q \equiv 3 \pmod 4$ and either
     \begin{itemize}
         \item [$(a)$]  \begin{equation*}
         m \in \mathscr{P}^+_{s} \quad \text{ with } \quad  \mathrm{N}_{q^{2t}/q^t}(h)=-1, \,\, \text{ or } 
            \end{equation*}
         \item [$(b)$] \begin{equation*}
       m \in \mathbb{F}_{q^t} \setminus (\mathscr{P}^+_{s} \cup \mathscr{P}^-_{s})  \quad \text{ with } \quad \mathrm{N}_{q^{2t}/q^t}(h)=1 \text{ and } h^2 \neq -1.  
       \end{equation*}
     \end{itemize}
 \end{itemize}
 \end{thm}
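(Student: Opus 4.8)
\medskip
\noindent\textbf{Proof strategy.} I will use the reformulation recorded after \eqref{scattpol}: $\psi_{m,h,s}$ is scattered if and only if, for every $\lambda\in\mathbb{F}_{q^{2t}}$, the polynomial $\psi_{m,h,s}-\lambda X$ has at most $q$ roots in $\mathbb{F}_{q^{2t}}$, i.e.\ the $\mathbb{F}_q$-linear map $x\mapsto\psi_{m,h,s}(x)-\lambda x$ has kernel of $\mathbb{F}_q$-dimension at most $1$. Arguing by contradiction, I assume that for some $\lambda$ there are $\mathbb{F}_q$-linearly independent $y,z\in\mathbb{F}_{q^{2t}}^*$ with $\psi_{m,h,s}(y)=\lambda y$ and $\psi_{m,h,s}(z)=\lambda z$, and I aim to conclude that $m\in\mathscr{P}^{+}_{s}\cup\mathscr{P}^{-}_{s}$, or that $\mathrm{N}_{q^{2t}/q^t}(h)\notin\{1,-1\}$, or --- only in the regime of part $ii)$ --- that $h^{2}=-1$; in each case this contradicts the hypotheses.

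\emph{Rewriting $\psi_{m,h,s}$ over $\mathbb{F}_{q^t}$.} Since $\gcd(s,2t)=1$, the integer $s$ is odd, $\sigma\colon x\mapsto x^{q^s}$ generates $\mathrm{Gal}(\mathbb{F}_{q^{2t}}/\mathbb{F}_q)$, and $\sigma^{t}$ coincides with the conjugation $x\mapsto x^{q^t}$, which I denote $x\mapsto\overline{x}$. Reducing the exponents $1,t-1,t+1,2t-1$ modulo $2t$ and using $\sigma^{t}=\overline{(\cdot)}$ one rewrites
\[
\psi_{m,h,s}(x)=m\bigl(\sigma(x)-\delta_1\,\overline{\sigma(x)}\bigr)+\bigl(\overline{\sigma^{-1}(x)}+\delta_2\,\sigma^{-1}(x)\bigr)=P(x)+\overline{Q(x)},
\]
where $\delta_1=h/\overline{\sigma(h)}$, $\delta_2=h/\sigma^{-1}(h)$, $P=m\sigma+\delta_2\sigma^{-1}$ and $Q=\sigma^{-1}-\varepsilon\sigma$ with $\varepsilon=\overline{m\delta_1}$; thus each of the two brackets is (up to a scalar and a conjugation) of Lunardon--Polverino shape. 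A direct computation with norms gives $\mathrm{N}_{q^{2t}/q^t}(\delta_1)=\mathrm{N}_{q^{2t}/q^t}(\delta_2)=\mathrm{N}_{q^{2t}/q^t}(h)^{\,1-q^{s}}$, and since $q$ is odd and $\mathrm{N}_{q^{2t}/q^t}(h)=\pm1$ under every hypothesis of the theorem, we get $\mathrm{N}_{q^{2t}/q^t}(\delta_1)=\mathrm{N}_{q^{2t}/q^t}(\delta_2)=1$. Consequently each of the additive maps $v\mapsto v-\delta_1\overline{v}$ and $w\mapsto\overline{w}+\delta_2 w$ has a $1$-dimensional kernel over $\mathbb{F}_{q^t}$, which by Hilbert~$90$ is a multiplicative coset of $\ker\mathrm{Tr}_{q^{2t}/q^t}$. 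This is what makes $\ker\mathrm{Tr}_{q^{2t}/q^t}$, hence the sets $\mathscr{P}^{\pm}_{s}$, the natural habitat of the obstruction to scatteredness.

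\emph{From a two-dimensional kernel to a forbidden element.} The relabeling $b=\sigma^{-1}(x)$ turns $\psi_{m,h,s}(x)=\lambda x$ into a $q^{s}$-linearized equation in $b$ of $q^{s}$-degree $t+2$ involving only the twists $\sigma^{0},\sigma^{\pm1},\sigma^{t},\sigma^{t+2}$ and the coefficients $m,\delta_1,\delta_2$; one studies its solution $\mathbb{F}_q$-subspace after normalizing $\delta_1,\delta_2,\varepsilon$ to $\pm1$ by a substitution $x\mapsto\gamma x$ built from the Hilbert~$90$ representation of $h$ afforded by $\mathrm{N}_{q^{2t}/q^t}(h)=\pm1$. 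The linear algebra of the pair $(P,Q)$ yields a scalar $\kappa=m+\delta_2\varepsilon=m(1+\delta_2\overline{\delta_1})$ with $P-\delta_2 Q=\kappa\,\sigma$ and $\varepsilon P+mQ=\kappa\,\sigma^{-1}$, so that when $\kappa\neq0$ one may solve the equation together with its conjugate for $\sigma(x)$ and $\sigma^{-1}(x)$ and then impose $\sigma\circ\sigma^{-1}=\mathrm{id}$; feeding the two $\mathbb{F}_q$-independent solutions $y,z$ into the resulting relation and using the kernel structure above forces a nonzero $w\in\ker\mathrm{Tr}_{q^{2t}/q^t}$ (after the coset correction) with $w^{q^{s}+1}=\pm m$ or $w^{q^{s}-1}=\pm m$. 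Which of the two powers appears, and with which sign, is governed by whether $\mathrm{N}_{q^{2t}/q^t}(h)$ is $+1$ or $-1$ and by whether $-1$ is a square in $\mathbb{F}_{q^t}$: indeed, for $0\neq w\in\ker\mathrm{Tr}_{q^{2t}/q^t}$ one has $w^{2}=-\mathrm{N}_{q^{2t}/q^t}(w)\in\mathbb{F}_{q^t}$, a non-square, so $\mathrm{N}_{q^{2t}/q^t}(w)$ is a square in $\mathbb{F}_{q^t}$ exactly when $-1$ is not, and this squareness is what decides whether a $w$ with a prescribed norm and trace can be produced.

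\emph{The case analysis.} If $t$ is even, or $t$ is odd with $q\equiv1\pmod4$, then $-1$ is a square in $\mathbb{F}_{q^t}$, the sign ambiguity above is harmless, and both potential obstructions $w^{q^{s}\pm1}$ land in $\mathscr{P}^{+}_{s}\cup\mathscr{P}^{-}_{s}$; hence $m\notin\mathscr{P}^{+}_{s}\cup\mathscr{P}^{-}_{s}$ contradicts the existence of such a $w$, which proves case $i)$ for either value of $\mathrm{N}_{q^{2t}/q^t}(h)$. If $t$ is odd and $q\equiv3\pmod4$, then $-1$ is a non-square in $\mathbb{F}_{q^t}$; when $\mathrm{N}_{q^{2t}/q^t}(h)=-1$ the branch that would force $m\in\mathscr{P}^{+}_{s}$ requires $\mathrm{N}_{q^{2t}/q^t}(w)$ to be a non-square for some $w\in\ker\mathrm{Tr}_{q^{2t}/q^t}$, impossible by the remark above, so only $\mathscr{P}^{-}_{s}$ has to be avoided --- case $ii)(a)$; when $\mathrm{N}_{q^{2t}/q^t}(h)=1$ both $\mathscr{P}^{+}_{s}$ and $\mathscr{P}^{-}_{s}$ must be avoided and, in addition, $h^{2}=-1$ must be excluded, because in that case (necessarily $h\notin\mathbb{F}_{q^t}$) one computes $\delta_1=1$, $\delta_2=-1$, hence $\kappa=0$ and $\psi_{m,h,s}(x)=P(x)-\overline{P(x)}$ with $P=m\sigma-\sigma^{-1}$, so $\mathrm{im}(\psi_{m,h,s})\subseteq\ker\mathrm{Tr}_{q^{2t}/q^t}$ and $\psi_{m,h,s}$ is far from scattered --- case $ii)(b)$. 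Finally, the possibilities $y=0$ or $z=0$ are trivial, and one checks directly that under the stated hypotheses the degree-$(t+2)$ equation is not vacuously satisfied. I expect the genuine difficulty to lie in the middle part: performing the normalization and the passage to the degree-$(t+2)$ equation explicitly, extracting the relation $w^{q^{s}\pm1}=\pm m$ from a hypothetical two-dimensional kernel, and --- most delicately --- handling each degenerate branch ($\kappa=0$, vanishing discriminant, vanishing leading coefficient, $h^{2}=-1$) so as to confirm that the conditions on $m$ and on $\mathrm{N}_{q^{2t}/q^t}(h)$ really do forbid a kernel of dimension $\geq2$; within each branch the algebra is routine, and the substance is the bookkeeping that matches the branches to the four cases of the statement.
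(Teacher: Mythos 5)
Your proposal is a strategy outline rather than a proof: the step on which everything hinges --- that a two\-/dimensional kernel of $\psi_{m,h,s}-\lambda X$ forces a relation $m=\pm w^{q^s+1}$ or $m=\pm w^{q^s-1}$ with $0\neq w\in\ker \mathrm{Tr}_{q^{2t}/q^t}$ --- is asserted but never derived, and you say yourself that the normalization, the passage to the degree-$(t+2)$ equation, and the treatment of the degenerate branches ($\kappa=0$, vanishing leading coefficient, etc.) remain to be done. That is exactly where the paper's work lies: it decomposes $\psi_{m,h,s}=L_m+M$, proves that $\F_{q^{2t}}$ splits both as $\ker L_m\oplus\ker M$ and as $\im L_m\oplus \im M$, expands $\gamma$ in the two bases coming from $\ker R$ and $\ker T$, reduces the scatteredness condition to a $2\times 2$ linear system in $(a,a^{q^s})$, and then settles the cases $x_1=0$, $x_2=0$, $x_1x_2\neq0$ (with $\lambda_1\in\F_q$ or not) one by one, each time reaching a contradiction of the form $\mu=-1$ or $m\in\mathscr{P}^+_s$. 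Your proposed reduction (writing $\psi$ as $P+\overline{Q}$ and ``normalizing $\delta_1,\delta_2,\varepsilon$ to $\pm1$ by a substitution $x\mapsto\gamma x$'') is not justified as stated --- a single scalar substitution cannot in general normalize both twisted coefficients simultaneously --- so the proposal cannot be checked even at the structural level. You also leave the subcase $h\in\F_{q^t}$ (allowed in case $i)$, handled in the paper by Remark 3.1 via the known quadrinomials and adjoints) unaddressed.

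More seriously, the one place where you commit to the outcome of the sign bookkeeping is wrong. In case $ii)(a)$ ($t$ odd, $q\equiv3\pmod 4$, $\mathrm{N}_{q^{2t}/q^t}(h)=-1$) you conclude that the $\mathscr{P}^+_s$-type obstruction cannot occur and hence that ``only $\mathscr{P}^-_s$ has to be avoided''. In fact the obstruction arising in this regime (the analogue of the paper's Case 3.1) is $m=-w^{q^s+1}$ with $w\in\ker \mathrm{Tr}_{q^{2t}/q^t}$, i.e.\ $m\in-\mathscr{P}^+_s$; since $-1$ is a nonsquare in $\F_{q^t}$ here, $-\mathscr{P}^+_s$ is a nonzero coset disjoint from $\mathscr{P}^+_s$ and is not excluded by $m\notin\mathscr{P}^-_s$. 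This is precisely why the theorem demands $m\in\mathscr{P}^+_s$ (disjoint from both $\mathscr{P}^-_s$ and $-\mathscr{P}^+_s$) rather than the weaker condition you claim suffices; your stronger claim would contradict the (conjecturally necessary) hypotheses of the theorem and the computational evidence behind them. So the branch-matching, which you yourself identify as the substance of the argument, has not been carried out correctly, and the proof as proposed has a genuine gap.
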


The conditions in Theorem \ref{main} strictly include those introduced in the previous papers on $\psi_{m,h,s}$.\\

\noindent \textbf{Outline.} The paper is organized as follows. In Section \ref{ScattCond}, we will prove some preliminary results that will be used to prove Theorem \ref{main}. Section \ref{sec main} is devoted to the proof of Theorem \ref{main}, which will be divided in two separate cases, according to $\mathrm{N}_{q^{2t}/q^t}(h)=-1$ or $\mathrm{N}_{q^{2t}/q^t}(h)=1$. In Section \ref{sec:equiv} we will deal with the equivalence issue, and we show that the family of quadrinomials in Theorem \ref{main} is not equivalent to the three known families of scattered polynomials of $\F_{q^n}$ existing for infinitely many $n$ and $q$. Finally, in Sections \ref{sec:stab} and \ref{sec:necessarie} we compute the stabilizer of the subspace $U_{\psi_{m,h,s}}$ and provide partial results on the necessary conditions for the quadrinomial $\psi_{m,h,s}$ to be scattered.

\section{Preliminary Results}\label{ScattCond}
In this section we will collect some preliminary results that will be used to prove Theorem \ref{main}.
From now on, let $q$ be  an odd prime power, $t \geq 3$ and $1 \leq s \leq  2t-1$ with $\gcd(s,2t)=1$. We will consider the $q^s$-linearized polynomial of $\mathcal{\tilde{L}}_{2t,q,s}$
 \begin{equation}\label{quadrinomial}
 \psi_{m,h,s}=m(X^{q^s}-h^{1-q^{s(t+1)}}X^{q^{s(t+1)}})+X^{q^{s(t-1)}}+h^{1-q^{s(2t-1)}}X^{q^{s(2t-1)}}
 \end{equation}
where $(m,h) \in \F^*_{q^t} \times \F^*_{q^{2t}}$. The following result can be found in \cite[Lemma 2.1]{SmaZaZu}.

\begin{lemma}\label{power}
Let $\F_{q^{2t}}$ be the finite field of $q^{2t}$ elements, $t \geq 3$ and $q$ an odd prime power. Then,
\begin{itemize}
\item [$(i)$] the additive group of $\F_{q^{2t}}$ is the the direct sum of $\F_{q^t}$ and $\ker \mathrm{Tr}_{q^{2t}/q^t}$;
\item [$(ii)$] the product of two elements in $\ker \mathrm{Tr}_{q^{2t}/q^t}$ belongs to $\F_{q^t}$;
\item [$(iii)$] if $w\in \ker \mathrm{Tr}_{q^{2t}/q^t}$  and $\ell$ is a non-negative integer 
    \begin{equation*}
        w^\ell\in\begin{cases}
            \ker \mathrm{Tr}_{q^{2t}/q^t}  & \text{if}\,\,\ell\,\,\text{is}\,\,\text{odd},\\
            \mathbb{F}_{q^t} & otherwise.
        \end{cases}
    \end{equation*}
\end{itemize}
\end{lemma}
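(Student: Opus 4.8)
The plan is to reduce everything to the single elementary observation that, because $n/d=2$ here, the relative trace takes the form $\mathrm{Tr}_{q^{2t}/q^t}(x)=x+x^{q^t}$, and consequently
\[
K:=\ker\mathrm{Tr}_{q^{2t}/q^t}=\{\,w\in\F_{q^{2t}}:w^{q^t}=-w\,\}.
\]
All three items then follow from short, direct computations with the Frobenius automorphism $x\mapsto x^{q^t}$, which generates $\mathrm{Gal}(\F_{q^{2t}}/\F_{q^t})$ and fixes exactly $\F_{q^t}$.

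For $(i)$ I would argue by dimension and counting. The relative trace is $\F_{q^t}$-linear and surjective, so $K$ is an $\F_{q^t}$-subspace of $\F_{q^{2t}}$ with $|K|=q^t$. Next, $\F_{q^t}\cap K=\{0\}$: for $a\in\F_{q^t}$ one has $\mathrm{Tr}_{q^{2t}/q^t}(a)=a+a^{q^t}=2a$, and since $q$ is odd $2\neq 0$ in $\F_{q^{2t}}$, so $a\in K$ forces $a=0$. Because $|\F_{q^t}|\,|K|=q^t\cdot q^t=q^{2t}=|\F_{q^{2t}}|$ and the intersection is trivial, the sum $\F_{q^t}+K$ is direct and exhausts $\F_{q^{2t}}$; in fact this is a decomposition of $\F_{q^{2t}}$ even as an $\F_{q^t}$-vector space. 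For $(ii)$, given $w,w'\in K$ I would compute $(ww')^{q^t}=w^{q^t}(w')^{q^t}=(-w)(-w')=ww'$, so $ww'$ is fixed by $x\mapsto x^{q^t}$ and hence lies in $\F_{q^t}$. For $(iii)$, given $w\in K$ and a non-negative integer $\ell$ I would compute $(w^\ell)^{q^t}=(w^{q^t})^\ell=(-w)^\ell=(-1)^\ell w^\ell$: if $\ell$ is even this says $w^\ell$ is fixed by the Frobenius, hence $w^\ell\in\F_{q^t}$; if $\ell$ is odd it says $(w^\ell)^{q^t}=-w^\ell$, i.e.\ $\mathrm{Tr}_{q^{2t}/q^t}(w^\ell)=w^\ell-w^\ell=0$, hence $w^\ell\in K$.

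There is no real obstacle here: the argument is entirely formal once the description of $K$ via $w^{q^t}=-w$ is in place. The only genuine use of a hypothesis is the appeal to $q$ odd in $(i)$, needed to guarantee $\F_{q^t}\cap K=\{0\}$ (equivalently, that $2$ is invertible in the quadratic extension $\F_{q^{2t}}/\F_{q^t}$), together with the standing assumption $n/d=2$, which is what makes the Frobenius computations collapse so cleanly. One could alternatively deduce $(ii)$ from $(iii)$ with $\ell=2$ after polarizing, but treating the three statements independently from $w^{q^t}=-w$ is the most transparent route.
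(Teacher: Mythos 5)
Your proof is correct, and it is the standard argument: characterize $\ker\mathrm{Tr}_{q^{2t}/q^t}$ as $\{w : w^{q^t}=-w\}$, get $(i)$ from surjectivity of the trace plus the fact that $2$ is invertible when $q$ is odd, and get $(ii)$ and $(iii)$ by applying the Frobenius $x\mapsto x^{q^t}$ to products and powers. The paper does not prove this lemma at all — it simply quotes it from \cite{SmaZaZu} — so your self-contained computation is exactly the expected justification and there is nothing to add.
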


An important role in our proofs will be played by the sets $\mathscr{P}^+_{s}$ and $\mathscr{P}^-_{s}$ as in \eqref{p+p-}.
It is straightforward to see that both these sets are contained in $\mathbb{F}_{q^t}$. Moreover, the following result holds.

\begin{lemma}  \label{lem:P}
Let $q$ be an odd prime power, $t\geq 3$, and $\gcd(s,2t)=1$. 
Then
\begin{itemize}
\item [$(i)$]  $\mathscr{P}^{-}_{s}=\mathscr{P}^{-}_1$ and $\mathscr{P}^{+}_s=\mathscr{P}^{+}_1$;
\item [$(ii)$] $\mathscr{P}^{+}_s \cap \mathscr{P}^{-}_s= \{0\}$.
\end{itemize}
\end{lemma}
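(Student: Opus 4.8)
The plan is to reduce both parts to elementary group theory in $\F_{q^t}^{\ast}$, using that $N:=\ker\mathrm{Tr}_{q^{2t}/q^t}$ is a line. First I would record the setup: since $\mathrm{Tr}_{q^{2t}/q^t}$ is $\F_{q^t}$-linear and surjective onto $\F_{q^t}$, the set $N$ is a one-dimensional $\F_{q^t}$-subspace of $\F_{q^{2t}}$, so $N=\varepsilon\,\F_{q^t}$ for a fixed $\varepsilon\in N\setminus\{0\}$, and $\varepsilon+\varepsilon^{q^t}=0$ gives $\varepsilon^{q^t-1}=-1$ (the same holds for every nonzero element of $N$, as $N$ is Frobenius-stable). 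By Lemma~\ref{power} one has $\varepsilon^{q^s+1}=\varepsilon^{q^s}\varepsilon\in\F_{q^t}^{\ast}$ and $\varepsilon^{q^s-1}=\varepsilon^{q^s}/\varepsilon\in\F_{q^t}^{\ast}$. Hence, for each choice of sign,
\[
\mathscr{P}^{\pm}_{s}\setminus\{0\}=\varepsilon^{q^s\pm1}\cdot\bigl\{\lambda^{q^s\pm1}:\lambda\in\F_{q^t}^{\ast}\bigr\},
\]
a coset in $\F_{q^t}^{\ast}$ of the subgroup $\{\lambda^{q^s\pm1}:\lambda\in\F_{q^t}^{\ast}\}$, which is precisely the group of $\gcd(q^s\pm1,\,q^t-1)$-th powers of $\F_{q^t}^{\ast}$.

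For part $(i)$ I would first check that these subgroups do not depend on $s$: using $\gcd(q^a-1,q^b-1)=q^{\gcd(a,b)}-1$ and $\gcd(s,t)=1$ gives $\gcd(q^s-1,q^t-1)=q-1$, while $q^s+1\mid q^{2s}-1$, $\gcd(2s,t)=\gcd(2,t)$ and the fact that $s$ is odd yield $\gcd(q^s+1,q^t-1)=\gcd(q+1,q^t-1)$ (treating $t$ even and $t$ odd separately). Writing $d_{\pm}:=\gcd(q\pm1,q^t-1)$, it follows that $\mathscr{P}^{\pm}_s$ and $\mathscr{P}^{\pm}_1$ are cosets of the \emph{same} subgroup $H_{\pm}$ of $\F_{q^t}^{\ast}$ (the $d_{\pm}$-th powers), so it only remains to verify that they are the same coset, i.e. that $\varepsilon^{q^s-q}=\varepsilon^{q^s\pm1}/\varepsilon^{q\pm1}\in H_{\pm}$. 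Since $s$ is odd one has $d_{\pm}\mid q\pm1\mid q^{s}-q$; writing $q^{s}-q=d_{\pm}M_{\pm}$ we get
\[
\bigl(\varepsilon^{q^s-q}\bigr)^{(q^t-1)/d_{\pm}}=\bigl(\varepsilon^{q^t-1}\bigr)^{M_{\pm}}=(-1)^{M_{\pm}},
\]
and a short computation using that $s-1$ is even shows $M_{\pm}$ is even. Hence $\varepsilon^{q^s-q}\in H_{\pm}$ and $\mathscr{P}^{\pm}_s=\mathscr{P}^{\pm}_1$.

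For part $(ii)$, by $(i)$ it is enough to show $\mathscr{P}^{+}_1\cap\mathscr{P}^{-}_1=\{0\}$, and I would distinguish the two sets by the value of the norm map $\mathrm{N}_{q^t/q}(x)=x^{(q^t-1)/(q-1)}$ on $\F_{q^t}$. If $0\neq x=v^{q-1}\in\mathscr{P}^{-}_1$ with $v\in N\setminus\{0\}$, then $\mathrm{N}_{q^t/q}(x)=v^{q^t-1}=-1$. If instead $0\neq x=w^{q+1}\in\mathscr{P}^{+}_1$ with $w\in N\setminus\{0\}$, put $u:=w^{(q^t-1)/(q-1)}$; then $u^{q-1}=w^{q^t-1}=-1$, so $u^{q^2-1}=(u^{q-1})^{q+1}=(-1)^{q+1}=1$, i.e. $u\in\F_{q^2}$, while $u\notin\F_q$ (otherwise $u^{q-1}=1\neq-1$). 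Therefore $\mathrm{N}_{q^t/q}(x)=u^{q+1}=u^{q-1}\cdot u^{2}=-u^{2}$ with $u^{2}\in\F_q^{\ast}$, and $u^{2}\neq1$ because $u\notin\F_q$; hence $\mathrm{N}_{q^t/q}(x)\neq-1$. Since no nonzero element can lie in both sets, $\mathscr{P}^{+}_1\cap\mathscr{P}^{-}_1=\{0\}$.

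The routine part is the number-theoretic bookkeeping in $(i)$ — the two $\gcd$ identities and the parity of $M_{\pm}$ — which only needs a little care according to the parity of $t$. The one genuinely non-mechanical step is the choice of invariant in $(ii)$: the map $x\mapsto x^{(q^t-1)/(q-1)}$ is constantly $-1$ on $\mathscr{P}^{-}_1\setminus\{0\}$, whereas on $\mathscr{P}^{+}_1\setminus\{0\}$ it takes values in $-\{u^{2}:u\in\F_{q^2}^{\ast}\setminus\F_q^{\ast}\}$, a set that avoids $-1$; recognizing this dichotomy is the heart of the argument.
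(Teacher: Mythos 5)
Your argument is correct, but it takes a genuinely different route from the paper's. For part $(i)$ the paper factors $q^s\mp 1=v\,(q\mp 1)$ with $v$ odd and argues that $w\mapsto w^v$ maps $\ker\mathrm{Tr}_{q^{2t}/q^t}$ onto itself, the reverse inclusion resting on the claim $\gcd(v,q^{2t}-1)=1$; you instead use $\ker\mathrm{Tr}_{q^{2t}/q^t}=\varepsilon\,\F_{q^t}$ to write $\mathscr{P}^{\pm}_s\setminus\{0\}=\varepsilon^{q^s\pm1}H_\pm$ as a coset of the subgroup $H_\pm$ of $d_\pm$-th powers of $\F_{q^t}^{*}$, check via the gcd identities that $d_\pm=\gcd(q\pm1,q^t-1)$ is independent of $s$, and then verify $\varepsilon^{q^s-q}\in H_\pm$ from $\varepsilon^{q^t-1}=-1$ together with the parity of $M_\pm$ (your bookkeeping is fine in all cases, including $t$ odd, where $d_+=2$ and the evenness of $M_+$ uses $8\mid q^{s-1}-1$). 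This coset formulation is more structural and in fact more robust: it never needs the coprimality of $(q^s-1)/(q-1)$ with $q^{2t}-1$ on which the paper's converse inclusion relies, and that condition can actually fail (e.g. $q=7$, $s=3$, $t=4$, where $3$ divides both), so your proof of $(i)$ quietly repairs that step rather than merely reproving it. For part $(ii)$ the paper takes square roots in $x^{(q+1)/2}=\pm y^{(q-1)/2}$ and applies the $q^t$-power Frobenius, exploiting that exactly one of $(q\pm1)/2$ is odd; you instead separate the two sets by the invariant $\mathrm{N}_{q^t/q}$, which is identically $-1$ on $\mathscr{P}^{-}_1\setminus\{0\}$ and equals $-u^2$ with $u\in\F_{q^2}\setminus\F_q$ (hence never $-1$) on $\mathscr{P}^{+}_1\setminus\{0\}$. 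Both arguments are short; the paper's avoids any norm computation, while yours has the mild bonus of recording the actual norm values taken on the two sets.
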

\begin{proof}
  $(i)$ Let $v:=(q^s-1)/(q-1)$ and $y \in \mathscr{P}^{-}_s$. Then, $y=w^{q^s-1}$ with $w \in \ker \mathrm{Tr}_{q^{2t}/q^t}$. Since $y=w^{q^s-1}=\left(w^v\right)^{q-1}$ and  $v$ is odd, by Lemma \ref{power}, $w^v$ belongs to $\ker \mathrm{Tr}_{q^{2t}/q^t}$ and hence $y \in \mathscr{P}^{-}_1$. Therefore, $\mathscr{P}^{-}_{s}\subseteq\mathscr{P}^{-}_1$. Conversely, let $w^{q-1}\in \mathscr{P}^-_1$ with $w\in \ker \mathrm{Tr}_{q^{2t}/q^t}$. Since $\gcd\left(v, \frac{q^{2t}-1}{q-1}\right)=1$, there exists $a \in \mathbb{Z}$ such that $av \equiv 1 \pmod{\frac{q^{2t}-1}{q-1}}$. Note that $a$ is odd and by Lemma \ref{power} we have $\overline{w}=w^a \in \ker \mathrm{Tr}_{q^{2t}/q^t}$. Then,
   \begin{eqnarray*}
   \overline{w}^{q^s-1}= w^{a(q^s-1)}=w^{av(q-1)}= w^{q-1},
   \end{eqnarray*} which implies $w^{q-1} \in \mathscr{P}^{-}_s$.\\ Putting $v:=(q^s+1)/(q+1)$ and noting that $\gcd\left(v, \frac{q^{2t}-1}{q+1}\right)=1$,  a similar argument can be applied in order to get $\mathscr{P}^+_{1}=\mathscr{P}^+_{s}$.  \\
$(ii)$ By $(i)$, we may prove the statement for $s=1$. Let $x,y\in \ker \mathrm{Tr}_{q^{2t}/q^t}$ such that $x^{q+1}=y^{q-1}$. As $q$ is odd, we have 
$$\left(x^{\frac{q+1}{2}}\right)^2=\left(y^{\frac{q-1}{2}}\right)^2,$$
and hence $x^{\frac{q+1}{2}}= \pm y^{\frac{q-1}{2}}$.\\
If $x^{\frac{q+1}{2}}=y^{\frac{q-1}{2}}$, by raising to the $q^t$-th power both sides of this equation, we get
        $$(-x)^{\frac{q+1}{2}}=(-y)^{\frac{q-1}{2}}.$$
Since only one between $(q+1)/2$ and $(q-1)/2$ is odd, we obtain $x^{\frac{q+1}{2}}=-y^{\frac{q-1}{2}}=-x^{\frac{q+1}{2}}$, which implies $x=y=0$.
The same holds if $x^{\frac{q+1}{2}}=-y^{\frac{q-1}{2}}$.
\end{proof}

The next results can be deduced from \cite[Proposition 3.2-Proposition 3.6]{longobardi_marino_trombetti_zhou}. 
To make this article as self-contained as possible, we will recall them below.

\begin{prop} \label{p:hcondition}
Let $t\geq 3$ and $h \in \F_{q^{2t}}$.
\begin{itemize}
    \item [$(i)$] If $\mathrm{N}_{q^{2t}/q^t}(h)=-1$, then $h^{q^{2s}+1} \neq 1$.
    \item [$(ii)$] If $\mathrm{N}_{q^{2t}/q^t}(h)=1$ with $h^2 \neq -1$, then $h^{q^{2s}+1} \neq -1$.
\end{itemize}
Moreover, in both cases $h^{q^{s(t-2)}} \neq -h$. 
\end{prop}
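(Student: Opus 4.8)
The plan is to argue by contradiction in each item, exploiting a single descent observation. If $h\in\F_{q^{2t}}^*$ satisfies a relation of the form $h^{q^{b}}=\varepsilon h^{\delta}$ with $\varepsilon\in\{1,-1\}$ and $\delta\in\{1,-1\}$, then applying the $q^{b}$-power Frobenius once more gives $h^{q^{2b}}=h$, so $h\in\F_{q^{\gcd(2b,2t)}}$. In every relation that arises $\gcd(2b,2t)$ will equal $2$ when $t$ is odd and $4$ when $t$ is even (this uses that $\gcd(s,2t)=1$ forces $s$ odd and $\gcd(s,t)=1$, and that $\gcd(t-2,t)=\gcd(t,2)$). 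Hence $h$ is confined to $\F_{q^2}$ or $\F_{q^4}$, and from there every power $h^{q^{j}}$ collapses to $h$, $h^{q}$ or $h^{q^{2}}$ according to $j$ modulo $2$ or $4$; at that point the hypothesis on $\mathrm{N}_{q^{2t}/q^t}(h)=h^{q^t+1}$ can be compared directly with the assumed relation.

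For (i) and (ii) I would suppose $h^{q^{2s}+1}=\varepsilon$, with $\varepsilon=1$ in case (i) and $\varepsilon=-1$ in case (ii), so that $h^{q^{2s}}=\varepsilon h^{-1}$; the descent puts $h\in\F_{q^2}$ if $t$ is odd and $h\in\F_{q^4}$ if $t$ is even. If $t$ is odd then $h^{q^{2s}}=h$ (as $2s$ is even), so $h^2=\varepsilon$: for $\varepsilon=1$ this gives $h=\pm1$ and $\mathrm{N}_{q^{2t}/q^t}(h)=h^2=1\neq-1$, against (i); for $\varepsilon=-1$ it gives $h^2=-1$, excluded in (ii). If $t$ is even then $2s\equiv2\pmod4$ since $s$ is odd, so $h^{q^{2s}}=h^{q^2}$ and $h^{q^2+1}=\varepsilon$; according as $t\equiv0$ or $2\pmod4$ one finds $\mathrm{N}_{q^{2t}/q^t}(h)=h^{q^t+1}$ equals $h^2$ or $h^{q^2+1}=\varepsilon$. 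The latter contradicts the norm hypothesis outright. In the former, case (ii) forces $h=\pm1$ and then $h^{q^2+1}=1\neq-1=\varepsilon$, a contradiction, while case (i) forces $h^2=-1$, so $h$ has multiplicative order $4$; combined with $h^{q^2+1}=1$ this would give $4\mid q^2+1$, impossible for $q$ odd since then $q^2+1\equiv2\pmod4$. This elementary divisibility fact is the one genuinely non-formal input.

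The final assertion of the proposition is handled the same way. Assuming $h^{q^{s(t-2)}}=-h$ and applying $q^{s(t-2)}$-Frobenius once more gives $h^{q^{2s(t-2)}}=h$, and since $\gcd(s(t-2),t)=\gcd(t,2)$ we again get $h\in\F_{q^2}$ ($t$ odd) or $h\in\F_{q^4}$ ($t$ even). If $t$ is odd then $s(t-2)$ is odd, so $h^{q}=-h$ and $\mathrm{N}_{q^{2t}/q^t}(h)=h^{q+1}=-h^2$, which (exactly as above) forces either $h^2=1$, so $h=\pm1$ contradicting $h^{q}=-h$, or $h^2=-1$, excluded in (ii). If $t$ is even and $t\equiv2\pmod4$ then $s(t-2)\equiv0\pmod4$, so $h=-h$ and $h=0$, absurd; if $t\equiv0\pmod4$ then $s(t-2)\equiv2\pmod4$, so $h^{q^2}=-h$ and $\mathrm{N}_{q^{2t}/q^t}(h)=h^2$, leading to the same dichotomy (the order-$4$ contradiction via $4\nmid q^2+1$ in case (i), or $h=\pm1$ contradicting $h^{q^2}=-h$ in case (ii)).

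The only real obstacle is keeping the bookkeeping of Frobenius exponents modulo $2$ and $4$ straight across the several subcases; there is no conceptual difficulty beyond the descent step and the fact that $4\nmid q^2+1$ for odd $q$.
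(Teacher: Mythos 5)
Your proof is correct, and it takes a genuinely different (and more self-contained) route than the paper. The descent step is sound: from $h^{q^{2s}}=\pm h^{-1}$ or $h^{q^{s(t-2)}}=-h$ you get $h^{q^{4s}}=h$, resp.\ $h^{q^{2s(t-2)}}=h$, hence $h\in\F_{q^{\gcd(4s,2t)}}$, resp.\ $h\in\F_{q^{\gcd(2s(t-2),2t)}}$, and since $\gcd(s,2t)=1$ forces $s$ odd and coprime to $t$, and $\gcd(t-2,t)=\gcd(2,t)$, both gcds equal $2$ for $t$ odd and $4$ for $t$ even; the subsequent reduction of Frobenius exponents modulo $2$ or $4$, the comparison with $\mathrm{N}_{q^{2t}/q^t}(h)=h^{q^t+1}$ in each residue class of $t$ modulo $4$, and the final arithmetic input $q^2+1\equiv 2\pmod 4$ for odd $q$ all check out. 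The paper proceeds differently: part $(i)$ and the claim $h^{q^{s(t-2)}}\neq -h$ are not proved there at all but cited from \cite[Proposition 3.2]{longobardi_marino_trombetti_zhou}, and part $(ii)$ is proved by a case split on $q\bmod 4$ — for $q\equiv 1\pmod 4$ it is reduced to $(i)$ by replacing $h$ with $\lambda h$, $\lambda\in\F_q$, $\lambda^2=-1$; for $q\equiv 3\pmod 4$ and $t$ odd it uses the computation $\gcd(q^{2s}+1,q^{2t}-1)=2$ (so $h^{q^{2s}+1}=-1$ forces $h^2=-1$); for $q\equiv 3\pmod 4$ and $t$ even it uses the same descent to $\F_{q^4}$ and the split $t\equiv 0,2\pmod 4$ that you use. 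What your approach buys is uniformity and independence from the cited reference: one descent observation handles $(i)$, $(ii)$ and the ``moreover'' assertion simultaneously, with no distinction on $q\bmod 4$, at the cost only of the modular bookkeeping you acknowledge and the observation $4\nmid q^2+1$, which plays the role of the paper's gcd computation.
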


\begin{proof}

$(i)$ See  \cite[Proposition 3.2]{longobardi_marino_trombetti_zhou}.\\
Regarding $(ii)$, suppose that $\mathrm{N}_{q^{2t}/q^t}(h)=1$. We will split the proof in two cases.\\
\textit{Case 1:} $q \equiv 1 \pmod 4$. Let $k= \lambda h$ with $\lambda^2=-1$. By $(i)$, then $k^{q^{2s}+1} \neq 1$. This implies that $h^{q^{2s}+1} \neq -1$.\\
\textit{Case 2:} $q \equiv 3 \pmod 4$. If $t$ is odd, since $\gcd(q^{2s}+1,q^{2t}-1)=2$,  $h^2=-1$ if and only if $h^{q^{2s}+1}=-1$. Now, assume $t$ even and $h^{q^{2s}+1}=-1$. Then, $h \in \F_{q^4}$. If $t \equiv 0 \pmod 4$, then $h \in \F_{q^t}$ and $\mathrm{N}_{q^{2t}/q^t}(h)=h^2=1=-h^{q^{2s}+1}$, a contradiction. If $t \equiv 2 \pmod 4$, then $\mathrm{N}_{q^4/q^2}(h)=1=-h^{q^{2s}+1}$, a contradiction again. \\
Finally, $h^{q^{s(t-2)}} \neq -h$ as shown in \cite[Proposition 3.2]{longobardi_marino_trombetti_zhou}.
\end{proof}

Note that if $q \equiv 1 \pmod 4$ or  $t$ even and $q \equiv 3 \pmod 4$, an element $h \in \F_{q^{2t}}$ such that $\mathrm{N}_{q^{2t}/q^t}(h)=1$ and $h^2 = -1$ does not exist.

Throughout the paper it will be sometimes useful to write the polynomial $\psi_{m,h,s}$ as follows:
 \begin{equation*}
\psi_{m,h,s} = L_m + M,
\end{equation*}
where 
$$L_m = m(X^{q^s}-h^{1-q^{s(t+1)}}X^{q^{s(t+1)}})\quad \textnormal{ and } \quad M = X^{q^{s(t-1)}}+h^{1-q^{s(2t-1)}}X^{q^{s(2t-1)}}.$$
Putting $L:=L_1$, we have that $\ker L_m= \ker L$ and $\im L_m= \im L$ for any $m \in \F^*_{q^t}$. Moreover, if $\mathrm{N}_{q^{2t}/q^t}(h)= \pm 1$, 
\begin{equation}\label{im-1}
    \im L = \left \{z  \in \F_{q^{2t}} : z^{q^{st}} +h^{q^{st}-q^s}z = 0 \right \} \quad \textnormal{ and } \quad \im M=\left \{z \in \F_{q^{2t}} : z^{q^{st}}-h^{q^{st}-q^{s(t-1)}}z = 0 \right \}. 
\end{equation}

Although the next results are proven in \cite{longobardi_marino_trombetti_zhou} for $\mathrm{N}_{q^{2t}/q^t}(h)=-1$, these hold for $\mathrm{N}_{q^{2t}/q^t}(h)=1$ and $h^2\neq-1$, as well. Since the techniques are similar, we omit the proof. 

\begin{prop}\label{directsum}
The finite field $\F_{q^{2t}}$, $t \geq 3$, seen as $\Fqt$-vector space, is both the direct sum of $\ker L_m$ and  $\ker M$, and of $\im L_m$ and  $\im M$.
\end{prop}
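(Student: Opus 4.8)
The plan is to pass to the polynomials $L=L_1$ and $M$ (legitimate since $\ker L_m=\ker L$ and $\im L_m=\im L$ for every $m\in\F_{q^t}^*$) and then to show that each of the four $\Fqt$-subspaces $\ker L$, $\ker M$, $\im L$, $\im M$ of the two-dimensional $\Fqt$-space $\F_{q^{2t}}$ is an $\Fqt$-line, while $\ker L\cap\ker M=\{0\}$ and $\im L\cap\im M=\{0\}$; both direct sum decompositions then follow at once by a dimension count. That the four sets are $\Fqt$-subspaces is immediate: for $\lambda\in\Fqt$ one has $\lambda^{q^{st}}=\lambda$ (since $t\mid st$), so scaling by $\lambda$ preserves the equations defining $\im L$ and $\im M$ in \eqref{im-1}, while $L(\lambda x)=\lambda^{q^s}L(x)$ and $M(\lambda x)=\lambda^{q^{s(t-1)}}M(x)$ show that $\ker L$ and $\ker M$ are $\Fqt$-closed.

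For the dimension count I would invoke \eqref{im-1} (available because $\mathrm{N}_{q^{2t}/q^t}(h)=\pm1$): there $\im L=\{z\in\F_{q^{2t}}:z^{q^{st}}=-h^{q^{st}-q^s}z\}$ and $\im M=\{z\in\F_{q^{2t}}:z^{q^{st}}=h^{q^{st}-q^{s(t-1)}}z\}$, each of the shape $z^{q^{st}}=\gamma z$ with $\gamma\neq0$. Nonzero solutions satisfy $z^{q^{st}-1}=\gamma$, and the group homomorphism $z\mapsto z^{q^{st}-1}$ of $\F_{q^{2t}}^*$ has kernel $\F_{q^t}^*$, because $z^{q^{st}}=z$ holds exactly for $z\in\Fqt$ (as $\gcd(st,2t)=t$, $s$ being odd); hence every nonempty fibre has $q^t-1$ elements. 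Since $L$ and $M$ are nonzero maps, $\im L$ and $\im M$ are nontrivial, so each consists of $0$ together with one such fibre and therefore has exactly $q^t$ elements. By rank--nullity over $\F_q$ (and the $\Fqt$-structure just noted) the same holds for $\ker L$ and $\ker M$, so all four subspaces are $\Fqt$-lines.

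It then remains to exclude nonzero common elements, and the only input needed is the clause $h^{q^{s(t-2)}}\neq-h$ of Proposition \ref{p:hcondition}. If $0\neq z\in\im L\cap\im M$, then $-h^{q^{st}-q^s}=z^{q^{st}-1}=h^{q^{st}-q^{s(t-1)}}$; cancelling $h^{q^{st}}$ gives $h^{q^{s(t-1)}}=-h^{q^s}$, and applying the inverse Frobenius $x\mapsto x^{q^{s(2t-1)}}$ (which sends $x$ to $x^{q^{-s}}$ and fixes $-1$) yields $h^{q^{s(t-2)}}=-h$, a contradiction. If $0\neq x\in\ker L\cap\ker M$, I would take the relation $x^{q^s}=h^{1-q^{s(t+1)}}x^{q^{s(t+1)}}$ coming from $L(x)=0$, raise it to the power $q^{s(t-2)}$ to get $x^{q^{s(t-1)}}=h^{q^{s(t-2)}-q^{s(2t-1)}}x^{q^{s(2t-1)}}$ (using $s(t+1)+s(t-2)=s(2t-1)$), and compare with $x^{q^{s(t-1)}}=-h^{1-q^{s(2t-1)}}x^{q^{s(2t-1)}}$ coming from $M(x)=0$; cancelling the nonzero factor $h^{-q^{s(2t-1)}}x^{q^{s(2t-1)}}$ gives once more $h^{q^{s(t-2)}}=-h$, again contradicting Proposition \ref{p:hcondition}. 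Hence both intersections are trivial and $\F_{q^{2t}}=\ker L_m\oplus\ker M=\im L_m\oplus\im M$.

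The argument is essentially mechanical; the only points requiring care are the bookkeeping of Frobenius exponents modulo $2t$ (for instance that $x\mapsto x^{q^{s(2t-1)}}$ inverts $x\mapsto x^{q^s}$, and that $(t+1)+(t-2)=2t-1$) and the observation that $L$ and $M$ are $q^s$- and $q^{s(t-1)}$-semilinear over $\Fqt$ rather than $\Fqt$-linear, which is why the rank--nullity step is taken over $\F_q$. Once \eqref{im-1} and Proposition \ref{p:hcondition} are in hand, nothing else is really at stake.
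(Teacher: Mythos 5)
Your proof is correct: the dimension count via the fibres of $z\mapsto z^{q^{st}-1}$ (using \eqref{im-1} and rank--nullity) and the reduction of both trivial-intersection claims to the clause $h^{q^{s(t-2)}}\neq -h$ of Proposition \ref{p:hcondition} are exactly the ingredients the paper intends, and your Frobenius-exponent bookkeeping checks out. The paper itself omits the argument (deferring to the techniques of \cite{longobardi_marino_trombetti_zhou}), and your proof follows essentially that same route, so there is nothing to flag.
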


Define the following $\Fqt$-linear maps of $\F_{q^{2t}}$
\begin{equation}\label{R-T-maps}
    R=X^{q^{st}}+h^{q^{s(t-1)}-q^s}X\quad\textnormal{and}\quad T=X^{q^{st}}+h^{q^s-q^{s(t-1)}}X.
\end{equation}
Clearly, $\dim_{\Fqt} \ker R = \dim_{\Fqt}\ker T= 1$ and it is straightfoward to see that  $\ker T = h^{q^{s(t-1)}-q^s} \ker R$.

\begin{lemma}\label{bases+split}
Let $\rho, \tau \in \F^*_{q^{2t}}$ and $t \geq 3$ such that $\rho \in \ker  R$ and $\tau \in \ker T$.
Then
\begin{enumerate}
    \item[$(i)$] $\{1,\rho\}$ and $\{1,\tau\}$ are $\F_{q^t}$-bases of $\F_{q^n}$.
    \item[$(ii)$] If $\tau=h^{q^{s(t-1)}-q^s} \rho$ and an element $\gamma \in \F_{q^{2t}}$ has components ($\lambda$,$\mu$) in the $\F_{q^t}$-basis $\{1,\rho\}$, then the components of  $\gamma$ in $\{1,\tau\}$ are
    \begin{equation}\label{components}
        \left(\lambda+ \mu\rho\left(1-h^{q^{s(t-1)}-q^s}\right), \mu\right).
    \end{equation}
\end{enumerate}
\end{lemma}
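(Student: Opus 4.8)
The plan is to treat the two parts in sequence, the first being essentially a rank computation and the second a linear-algebra bookkeeping exercise once the right change-of-basis matrix is identified.

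For part $(i)$, I would argue that $\{1,\rho\}$ is an $\F_{q^t}$-basis of $\F_{q^{2t}}$ exactly when $1$ and $\rho$ are $\F_{q^t}$-linearly independent, i.e.\ when $\rho \notin \F_{q^t}$. Suppose for contradiction that $\rho \in \F_{q^t}^*$ (it is nonzero by hypothesis). Then $\rho^{q^{st}} = \rho$, since $\gcd(s,2t)=1$ forces $\rho^{q^{st}}=\rho^{q^t}$ for elements of $\F_{q^t}$—more precisely $x \mapsto x^{q^{st}}$ restricted to $\F_{q^t}$ coincides with the identity because $st \equiv t \pmod{2t}$ is false in general, so instead I would note $\F_{q^t} = \mathrm{Fix}(x \mapsto x^{q^t})$ and $q^{st}=(q^t)^s$, hence $x^{q^{st}}=x$ for all $x\in\F_{q^t}$. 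Plugging $\rho\in\F_{q^t}$ into the defining relation $\rho^{q^{st}}+h^{q^{s(t-1)}-q^s}\rho=0$ gives $\rho\big(1+h^{q^{s(t-1)}-q^s}\big)=0$, so $h^{q^{s(t-1)}-q^s}=-1$, equivalently $h^{q^{s(t-2)}}=-h$ (raising to a suitable power of $q^s$). This contradicts the last assertion of Proposition \ref{p:hcondition}. Hence $\rho\notin\F_{q^t}$ and $\{1,\rho\}$ is a basis. Since $\tau = h^{q^{s(t-1)}-q^s}\rho$ is a nonzero $\F_{q^{2t}}$-multiple of $\rho$, the same contradiction (or a direct repeat of the argument using the relation $\tau^{q^{st}}+h^{q^s-q^{s(t-1)}}\tau=0$) shows $\tau\notin\F_{q^t}$, so $\{1,\tau\}$ is also a basis.

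For part $(ii)$, write $\gamma = \lambda\cdot 1 + \mu\cdot\rho$ with $\lambda,\mu\in\F_{q^t}$ and substitute $\rho = h^{q^s-q^{s(t-1)}}\tau$, i.e.\ $\rho = \tau / h^{q^{s(t-1)}-q^s}$; but the cleaner route is to use $\tau = h^{q^{s(t-1)}-q^s}\rho$ to write $\rho = \rho$ and express everything in terms of $1$ and $\tau$. Set $\eta := h^{q^{s(t-1)}-q^s}$ for brevity (so $\tau = \eta\rho$). Then $\rho = \rho\cdot 1$ and I want scalars so that $\rho = a\cdot 1 + b\cdot\tau = a + b\eta\rho$ — but $b\eta$ need not lie in $\F_{q^t}$, so this naive substitution is wrong; instead one writes $\rho = (\rho - \rho\eta)\cdot 1 \cdot(\text{something})$... the correct move is: since $\tau=\eta\rho$ we have $\rho\cdot 1 = \rho\big(1 - \eta\big)\cdot 1 + \rho\eta\cdot 1 = \rho(1-\eta) + \tau$, and $\rho(1-\eta)$ lies in $\F_{q^t}$ precisely because $\rho(1-\eta) = \rho - \tau$ and $\rho-\tau$ is fixed by $x\mapsto x^{q^t}$ — this is the key computation: using $\rho^{q^t}=$ (something) and $\tau^{q^t}=$ (something) derived from their defining relations, one checks $(\rho-\tau)^{q^t}=\rho-\tau$, hence $\rho-\tau\in\F_{q^t}$. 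Therefore $\rho = (\rho-\tau)\cdot 1 + 1\cdot\tau$ with $\rho-\tau\in\F_{q^t}$, and consequently
\begin{equation*}
\gamma = \lambda + \mu\rho = \lambda + \mu\big(\rho-\tau\big) + \mu\tau = \big(\lambda + \mu(\rho-\tau)\big)\cdot 1 + \mu\cdot\tau,
\end{equation*}
so the components of $\gamma$ in $\{1,\tau\}$ are $\big(\lambda+\mu(\rho-\tau),\ \mu\big) = \big(\lambda + \mu\rho(1-\eta),\ \mu\big)$, which is \eqref{components}.

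The main obstacle is the bookkeeping in part $(ii)$: one must resist the temptation to substitute $\rho = \eta^{-1}\tau$ naively (which produces a non-$\F_{q^t}$ coefficient), and instead verify the slightly subtle fact that $\rho-\tau = \rho(1-\eta)\in\F_{q^t}$ by computing $q^t$-th powers via the kernel relations \eqref{R-T-maps}. Concretely, from $\rho^{q^{st}}=-\eta\rho$ one extracts $\rho^{q^t}$ (using that $s$ is invertible mod $2t$, so applying the Frobenius $x\mapsto x^{q^s}$ an appropriate number of times converts $q^{st}$-powers to $q^t$-powers), and similarly for $\tau$; the identity $\tau=\eta\rho$ together with $\eta^{q^t}\eta = \N_{q^{2t}/q^t}(h)^{q^{s(t-1)}-q^s}=1$ (since $\N_{q^{2t}/q^t}(h)=\pm 1$ and the exponent $q^{s(t-1)}-q^s$ is even) then yields $(\rho-\tau)^{q^t}=\rho-\tau$. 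Once this is in hand, part $(ii)$ is immediate. Part $(i)$ is straightforward given Proposition \ref{p:hcondition}.
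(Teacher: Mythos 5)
Your proof is correct, and it is worth noting that the paper itself gives no proof of this lemma (it is imported from \cite{longobardi_marino_trombetti_zhou}, Propositions 3.2--3.6, with the details omitted); your argument supplies exactly the expected ones: part $(i)$ reduces to $\rho,\tau\notin\F_{q^t}$, which is ruled out via $h^{q^{s(t-2)}}\neq -h$ from Proposition \ref{p:hcondition}, and part $(ii)$ reduces to $\rho-\tau=\rho\bigl(1-h^{q^{s(t-1)}-q^s}\bigr)\in\F_{q^t}$, which follows from $\rho^{q^t}=-h^{q^{s(t-1)}-q^s}\rho$ together with $\bigl(h^{q^{s(t-1)}-q^s}\bigr)^{q^t+1}=\mathrm{N}_{q^{2t}/q^t}(h)^{q^{s(t-1)}-q^s}=1$, precisely as you indicate. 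Two points of hygiene. First, your aside that ``$st\equiv t\pmod{2t}$ is false in general'' is itself false: $\gcd(s,2t)=1$ forces $s$ odd, hence $st\equiv t\pmod{2t}$, so on $\F_{q^{2t}}$ the map $x\mapsto x^{q^{st}}$ \emph{is} the map $x\mapsto x^{q^t}$; this is the one-line justification for ``converting $q^{st}$-powers into $q^t$-powers'' in part $(ii)$, which you otherwise leave vague. Second, in part $(i)$ the phrase ``$\tau$ is a nonzero $\F_{q^{2t}}$-multiple of $\rho$'' proves nothing on its own (multiplying by an element of $\F_{q^{2t}}$ can well land in $\F_{q^t}$); it is your parenthetical rerun of the argument with the relation $\tau^{q^{st}}+h^{q^s-q^{s(t-1)}}\tau=0$ that does the work, leading to the same identity $h^{q^{s(t-2)}}=-h$ and hence the same contradiction (and it also covers the fact that in $(i)$ the element $\tau$ is an arbitrary nonzero element of $\ker T$, not necessarily $h^{q^{s(t-1)}-q^s}\rho$). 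Finally, you correctly read the lemma with the standing assumption $\mathrm{N}_{q^{2t}/q^t}(h)=\pm1$ (and $h^2\neq-1$ when the norm is $1$), which is what makes Proposition \ref{p:hcondition} and the identity $\bigl(h^{q^{s(t-1)}-q^s}\bigr)^{q^t+1}=1$ available; without some such hypothesis the statement would fail, e.g.\ when $h^{q^{s(t-1)}-q^s}=-1$, in which case $\ker R=\F_{q^t}$.
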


We conclude this section with the following results.

\begin{prop}\label{product-a}
For any nonzero vectors $u \in \ker L_m$, $v \in \ker M$ and any $a \in \F_{q^{2t}}$, the following statements are equivalent:
\begin{enumerate}
    \item[$(i)$]  $a \in \ker R$;
    \item[$(ii)$] $av \in \ker L_m$;
    \item[$(iii)$] $a M(u) \in \im\,L_m$.
\end{enumerate}
\end{prop}

\begin{prop}\label{product-b} 
For any nonzero vectors $u \in \ker L_m$, $v \in \ker M$ and any $b \in \F_{q^{2t}}$, the following statements are equivalent:
\begin{enumerate}
    \item[$(i)$] $b \in \ker T$;
    \item[$(ii)$] $b\,u \in \ker M$;
    \item[$(iii)$] $ b L(v) \in \im\, M$.
\end{enumerate}
\end{prop}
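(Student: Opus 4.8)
The plan is to recast each of the subspaces occurring in the statement as the kernel of a map $X\mapsto X^{q^{st}}+cX$, and then to deduce every equivalence from a one-line multiplicative identity together with a dimension count. The preliminary step is to establish the two \emph{companion descriptions}
\[
x\in\ker L \iff x^{q^{st}}=h^{q^{s(t-1)}-1}x, \qquad x\in\ker M \iff x^{q^{st}}=-h^{q^{s}-1}x,
\]
valid for every $x\in\F_{q^{2t}}$: these follow by rearranging $L(x)=0$ and $M(x)=0$, raising to a suitable power of $q^{s}$, and using that $x\mapsto x^{q^{2st}}$ is the identity on $\F_{q^{2t}}$. Combined with \eqref{im-1} — available because $\mathrm{N}_{q^{2t}/q^t}(h)=\pm1$ — which reads $z\in\im L\iff z^{q^{st}}=-h^{q^{st}-q^{s}}z$ and $z\in\im M\iff z^{q^{st}}=h^{q^{st}-q^{s(t-1)}}z$, and with the definition of $T$ in \eqref{R-T-maps}, i.e.\ $b\in\ker T\iff b^{q^{st}}=-h^{q^{s}-q^{s(t-1)}}b$, this turns every membership assertion in the proposition into a single linear condition on the Frobenius $x\mapsto x^{q^{st}}$.

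Next I would prove the two implications emanating from $(i)$. If $b\in\ker T$ and $u\in\ker L_m=\ker L$, then
\[
(bu)^{q^{st}}=b^{q^{st}}u^{q^{st}}=\bigl(-h^{q^{s}-q^{s(t-1)}}b\bigr)\bigl(h^{q^{s(t-1)}-1}u\bigr)=-h^{q^{s}-1}\,bu,
\]
whence $bu\in\ker M$; this gives $(i)\Rightarrow(ii)$. Next, for $0\neq v\in\ker M$ put $w:=L(v)$; since $\ker L\cap\ker M=\{0\}$ by Proposition \ref{directsum} and $v\neq0$, we have $w\neq0$, and $w\in\im L$, so for $b\in\ker T$
\[
(bw)^{q^{st}}=b^{q^{st}}w^{q^{st}}=\bigl(-h^{q^{s}-q^{s(t-1)}}b\bigr)\bigl(-h^{q^{st}-q^{s}}w\bigr)=h^{q^{st}-q^{s(t-1)}}\,bw,
\]
whence $bw\in\im M$; this gives $(i)\Rightarrow(iii)$.

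For the converse implications I would count dimensions. Fix nonzero $u\in\ker L_m$ and nonzero $v\in\ker M$ and put $w=L(v)\neq0$. Multiplication by $u$, respectively by $w$, is an $\F_{q^{2t}}$-linear automorphism of $\F_{q^{2t}}$, so $\{b:bu\in\ker M\}=u^{-1}\ker M$ and $\{b:bw\in\im M\}=w^{-1}\im M$ are $\F_{q^{t}}$-subspaces of dimension $1$, since $\ker M$ and $\im M$ are $1$-dimensional over $\F_{q^{t}}$ by Proposition \ref{directsum}; likewise $\dim_{\F_{q^{t}}}\ker T=1$ by the remark after \eqref{R-T-maps}. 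By the previous paragraph each of the two subspaces contains $\ker T$, hence
\[
\{b:bu\in\ker M\}=\ker T=\{b:bw\in\im M\}.
\]
Since $u$ and $v$ were arbitrary nonzero elements of $\ker L_m$ and $\ker M$, this is exactly the asserted equivalence of $(i)$, $(ii)$ and $(iii)$.

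The only point requiring care is the preliminary step: deriving the companion descriptions of $\ker L$ and $\ker M$ demands a careful tracking of the exponents $q^{st}$, $q^{s(t-1)}$, $q^{s}$ modulo $q^{2t}-1$, where one uses that $\gcd(s,2t)=1$ forces $s$ odd, so that $x\mapsto x^{q^{st}}$ is the involutory $\F_{q^{t}}$-automorphism of $\F_{q^{2t}}$. Once these are in place, the hypothesis $\mathrm{N}_{q^{2t}/q^t}(h)=\pm1$ is used only implicitly, through \eqref{im-1} and through the $1$-dimensionality statements recalled from Proposition \ref{directsum} and Lemma \ref{bases+split}, and the remaining manipulations are purely formal. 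Incidentally, the same three-step scheme — companion kernel description, one-line multiplicative identity, dimension count — also proves Proposition \ref{product-a}, with the roles of $L_m$ and $M$, of $R$ and $T$, and of $u$ and $v$ interchanged, so the two propositions could be handled together.
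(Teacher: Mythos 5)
Your proof is correct, and the paper itself gives no argument for this proposition (it is quoted from \cite{longobardi_marino_trombetti_zhou}, where the case $\mathrm{N}_{q^{2t}/q^t}(h)=-1$ is treated by exactly this kind of computation); your route — rewriting $\ker L$, $\ker M$, $\ker T$, $\im L$, $\im M$ as eigenspaces of $x\mapsto x^{q^{st}}$, multiplying eigenvalues for $(i)\Rightarrow(ii)$ and $(i)\Rightarrow(iii)$, and closing the converses by comparing one-dimensional $\F_{q^t}$-subspaces — is essentially the intended one. The only detail worth making explicit is that the one-dimensionality of $\ker M$ and $\im M$ follows from Proposition \ref{directsum} together with the observation that both summands in each decomposition are nonzero.
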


\begin{prop} \label{prop-W}
Let $x_1 \in \ker L_m$, $ 0 \neq x_2 \in \ker M$ and $h \in \F_{q^{2t}} \setminus \F_{q^t}$ such that $\mathrm{N}_{q^{2t}/q^t}(h)=-1$ or $\mathrm{N}_{q^{2t}/q^t}(h)=1$ and $h^2 \neq -1$. Then, the element 
$$\frac{M(x_1)}{x_2 \left (h^{q^s}+h^{q^{s(t-1)}} \right )}$$
belongs to $\ker \mathrm{Tr}_{q^{2t}/q^t}$.
\begin{proof}
Let $T_s :x \in \F_{q^{2t}} \longmapsto x+x^{q^{st}} \in \F_{q^t}$ and note that $\ker T_s = \ker \mathrm{Tr}_{q^{2t}/q^t}$. Then, in order to prove the statement, we have to show that 
$$T_s  \left ( \frac{M(x_1)}{x_2 \left (h^{q^s}+h^{q^{s(t-1)}} \right )} \right ) $$
 is equal to zero.\\
Suppose that $\mathrm{N}_{q^{2t}/q^t}(h)=-1$ . Since  $x_2\in\ker M$ and by \eqref{im-1}, we have
\begin{equation*}
\begin{split}
    &\left(\frac{M(x_1)}{x_2(h^{q^s}+h^{q^{s(t-1)}})} \right )^{q^{st}}+\frac{M(x_1)}{x_2(h^{q^s}+h^{q^{s(t-1)}})}\\
    &=-\frac{h^{q^{st}-q^{s(t-1)}}M(x_1)}{x_2h^{q^s-1}(h^{q^{s(t+1)}}+h^{q^{s(2t-1)}})} +\frac{M(x_1)}{x_2(h^{q^s}+h^{q^{s(t-1)}})}\\
    &=\frac{M(x_1)}{x_2h^{q^s+q^{s(t-1)}}(h^{q^{s(t+1)}}+h^{q^{s(2t-1)}})} +\frac{M(x_1)}{x_2(h^{q^s}+h^{q^{s(t-1)}})}=0\\
    \end{split}
\end{equation*}
A similar argument applies for $\mathrm{N}_{q^{2t}/q^t}(h)=1$ and $h^2 \neq -1$.
\end{proof}
\end{prop}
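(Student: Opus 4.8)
\quad Set $w:=M(x_1)/\left(x_2(h^{q^s}+h^{q^{s(t-1)}})\right)$. The plan is to prove the single identity $w+w^{q^{st}}=0$: this gives $w\in\ker\mathrm{Tr}_{q^{2t}/q^t}$ because $\gcd(s,2t)=1$ forces $s$ odd, so $st\equiv t\pmod{2t}$, hence $x\mapsto x^{q^{st}}$ is the nontrivial element of $\mathrm{Gal}(\F_{q^{2t}}/\F_{q^t})$ and $x+x^{q^{st}}=\mathrm{Tr}_{q^{2t}/q^t}(x)$ on $\F_{q^{2t}}$. First I would check that $w$ is well defined and nonzero: $x_2\neq 0$ by hypothesis; $h^{q^s}+h^{q^{s(t-1)}}\neq 0$, since otherwise applying the inverse of the $q^s$-power map to $h^{q^{s(t-1)}}=-h^{q^s}$ would give $h^{q^{s(t-2)}}=-h$, excluded by Proposition~\ref{p:hcondition}; and if $x_1=0$ the statement is trivial, while if $x_1\neq0$ then $M(x_1)\neq0$ because $\ker L_m\cap\ker M=\{0\}$ by Proposition~\ref{directsum}.

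The computational core is to evaluate $w^{q^{st}}$ factor by factor. For the numerator, $M(x_1)\in\im M$ together with \eqref{im-1} gives $M(x_1)^{q^{st}}=h^{q^{st}-q^{s(t-1)}}M(x_1)$. For the factor $x_2$, raising the defining relation $x_2^{q^{s(t-1)}}+h^{1-q^{s(2t-1)}}x_2^{q^{s(2t-1)}}=0$ of $\ker M$ to the $q^s$-th power, and using that $x\mapsto x^{q^{2st}}$ is the identity on $\F_{q^{2t}}$, yields $x_2^{q^{st}}=-h^{q^s-1}x_2$. Combining these with $(h^{q^s}+h^{q^{s(t-1)}})^{q^{st}}=h^{q^{s(t+1)}}+h^{q^{s(2t-1)}}$ produces
\[
w^{q^{st}}=-\,\frac{h^{q^{st}-q^{s(t-1)}}\,M(x_1)}{h^{q^s-1}\,x_2\,(h^{q^{s(t+1)}}+h^{q^{s(2t-1)}})}.
\]

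Finally, adding $w$ and dividing through by the nonzero factor $M(x_1)/x_2$, the identity $w+w^{q^{st}}=0$ reduces to the purely multiplicative statement
\[
h^{q^{st}-q^{s(t-1)}}(h^{q^s}+h^{q^{s(t-1)}})=h^{q^s-1}(h^{q^{s(t+1)}}+h^{q^{s(2t-1)}}),
\]
which I would verify by the substitution $h^{q^{s(i+t)}}=\mathrm{N}_{q^{2t}/q^t}(h)\,h^{-q^{si}}$, valid for every $i$ since $x\mapsto x^{q^{st}}$ is the nontrivial $\F_{q^t}$-automorphism of $\F_{q^{2t}}$ and $\mathrm{N}_{q^{2t}/q^t}(h)=\pm1$; both sides then collapse to $\mathrm{N}_{q^{2t}/q^t}(h)\cdot(h^{q^s}+h^{q^{s(t-1)}})/(h\cdot h^{q^{s(t-1)}})$, regardless of the sign of the norm, so the hypotheses $\mathrm{N}_{q^{2t}/q^t}(h)=-1$ and $\mathrm{N}_{q^{2t}/q^t}(h)=1$ with $h^2\neq-1$ are handled uniformly. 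The main obstacle is clerical rather than conceptual: pinning down the relation $x_2^{q^{st}}=-h^{q^s-1}x_2$ and checking the final $h$-identity with all exponents read modulo $q^{2t}-1$. The norm hypothesis and $h^2\neq-1$ enter the argument only through \eqref{im-1} and through the non-vanishing of $h^{q^s}+h^{q^{s(t-1)}}$ guaranteed by Proposition~\ref{p:hcondition}.
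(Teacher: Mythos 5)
Your proof is correct and follows essentially the same route as the paper's: compute the image of the element under $x\mapsto x+x^{q^{st}}$ using $M(x_1)^{q^{st}}=h^{q^{st}-q^{s(t-1)}}M(x_1)$ from \eqref{im-1}, the relation $x_2^{q^{st}}=-h^{q^s-1}x_2$ from $x_2\in\ker M$, and the norm condition $h^{q^{st}}=\mathrm{N}_{q^{2t}/q^t}(h)\,h^{-1}$. The only (welcome) cosmetic difference is that you verify the final $h$-identity uniformly for $\mathrm{N}_{q^{2t}/q^t}(h)=\pm1$ and explicitly invoke Proposition~\ref{p:hcondition} for $h^{q^s}+h^{q^{s(t-1)}}\neq 0$, whereas the paper works out the case $\mathrm{N}_{q^{2t}/q^t}(h)=-1$ and leaves the other case as analogous.
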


\section{Proof of Theorem \ref{main}}\label{sec main}

The aim of this section is to prove Theorem \ref{main}. To this end, we consider two separate cases, depending on whether $\mathrm{N}_{q^{2t}/q^t}(h) = -1$ or $\mathrm{N}_{q^{2t}/q^t}(h) = 1$.
First, we address the case where $h$ is in $\F_{q^t}$.

\begin{remark}\label{hfqt}
 \textnormal{Note that if $h \in \F_{q^t}$ with $\mathrm{N}_{q^{2t}/q^t}(h)=\pm 1$ and  $m \in \F_{q^t} \setminus (\mathscr{P}_s^+ \cup \mathscr{P}_s^-)$,  the quadrinomial $\psi_{m,h,s}$ is scattered, indeed:
    \begin{itemize}
        \item [$(i)$] If $h^2=-1$, then $h \in \F_{q^{\gcd(t,2)}}$. For $q \equiv 1 \pmod{4}$, $h \in \F_q$ and 
        $$
        \psi_{m,h,s}=m\Big( X^{q^s}-X^{q^{s(t+1)}}\Big)+X^{q^{s(t-1)}}+X^{q^{s(2t-1)}}.
        $$
         is scattered, see \cite{SmaZaZu}. For $q \equiv 3 \pmod{4}$, then $t$ is even, $h \in \F_{q^2} \setminus \F_q$ and
        $$
        \psi_{m,h,s}=m\Big( X^{q^s}+X^{q^{s(t+1)}}\Big)+X^{q^{s(t-1)}}-X^{q^{s(2t-1)}}.
        $$
        This is scattered if  and only if its adjoint polynomial  $\hat{\psi}_{m,h,s}$ is scattered. Since 
        $$
       \hat{\psi}_{m,h,s}= \mu \psi_{-\mu,h,s}=-\mu \Big( X^{q^s}-X^{q^{s(t+1)}}  \Big)+X^{q^{s(t-1)}}+X^{q^{s(2t-1)}},
        $$ where $\mu=1/m^{q^{s(t-1)}}$, this is scattered because $\mu \in \F_{q^t} \setminus (\mathscr{P}_s^+ \cup \mathscr{P}_s^-)$.
        \item [$(ii)$] if $h^2=1$, then $h = \pm{1}$. In this case, 
        $$
\psi_{m,h,s}=m\Big( X^{q^s}-X^{q^{s(t+1)}}\Big)+X^{q^{s(t-1)}}+X^{q^{s(2t-1)}}
        $$
    and it is scattered again.
    \end{itemize}}
\end{remark}

\subsection{Case \texorpdfstring{$\mathrm{N}_{q^{2t}/q^t}(h)=-1$}{N(h) = -1}}

In this section, we prove Theorem \ref{main} under the assumption $h \in \F_{q^{2t}} \setminus \F_{q^t}$ and $\mathrm{N}_{q^{2t}/q^t}(h)=-1$.

\begin{theorem}\label{t:main}
Let $t\geq 3$, $q$ be an odd prime power. For each $h \in \F_{q^{2t}} \setminus \F_{q^t}$ with $\mathrm{N}_{q^{2t}/q^t}(h)=-1$ and  $$m  \in 
\begin{cases}
\mathbb{F}_{q^t} \setminus \bigl(\mathscr{P}_s^+ \cup \mathscr{P}_s^- \bigr),
& \text{if $t$ is even, or $t$ is odd and $q \equiv 1 \pmod{4}$}, \\[2mm]
\mathscr{P}_s^+,
& \text{if $t$ is odd and $q \equiv 3 \pmod{4}$,}
\end{cases}
$$
the $q^s$-linearized polynomial
 \begin{equation*}
 \psi_{m,h,s}=m(X^{q^s}-h^{1-q^{s(t+1)}}X^{q^{s(t+1)}})+X^{q^{s(t-1)}}+h^{1-q^{s(2t-1)}}X^{q^{s(2t-1)}}
 \end{equation*}
 is scattered.
\end{theorem}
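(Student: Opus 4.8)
The plan is to prove that for every $c\in\F_{q^{2t}}$ the $\F_q$-subspace $\ker(\psi_{m,h,s}-cX)$ has dimension at most $1$; equivalently, that any two nonzero $x_1,x_2$ with $\psi_{m,h,s}(x_i)=cx_i$ are $\F_q$-proportional. Since $h\notin\F_{q^t}$, the descriptions in \eqref{im-1} and all of Propositions \ref{directsum}, \ref{product-a}, \ref{product-b}, \ref{prop-W} apply. By Proposition \ref{directsum} write each solution uniquely as $x_i=u_i+v_i$ with $u_i\in\ker L_m$, $v_i\in\ker M$; since $\psi_{m,h,s}=L_m+M$ and $L_m(u_i)=M(v_i)=0$,
\[
M(u_i)+L_m(v_i)=c\,u_i+c\,v_i,\qquad i=1,2.
\]
As $\ker L_m$ and $\ker M$ are $1$-dimensional over $\F_{q^t}$, fix nonzero $u_0\in\ker L_m$, $v_0\in\ker M$ and put $x_i=\lambda_i u_0+\mu_i v_0$ with $\lambda_i,\mu_i\in\F_{q^t}$. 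A direct computation using $s(t+1)\equiv s$ and $s(t-1)\equiv-s\pmod t$ gives $L_m(\mu v_0)=\mu^{q^s}L_m(v_0)$ and $M(\lambda u_0)=\lambda^{q^{s(t-1)}}M(u_0)$, so with $\alpha:=L_m(v_0)$, $\beta:=M(u_0)$ the identity above reads $\mu_i^{q^s}\alpha+\lambda_i^{q^{s(t-1)}}\beta=c(\lambda_iu_0+\mu_iv_0)$. Expressing $\alpha,\beta$ and the multiplication-by-$c$ map in the $\F_{q^t}$-basis $\{u_0,v_0\}$ turns this into a system of two $q^s$-semilinear equations in $(\lambda_i,\mu_i)$ over $\F_{q^t}$ with coefficients explicit in $c$ and $h$; scatteredness becomes the assertion that this system always has at most $q$ solutions $(\lambda,\mu)\in\F_{q^t}^2$.

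Next I would eliminate one unknown: solving the second equation for $\mu_i$ in terms of $\lambda_i$ and $\lambda_i^{q^s}$ and substituting into the first (after applying a suitable Frobenius power) leaves a single $q^s$-linearized equation over $\F_{q^t}$ of $q^s$-degree at most $2$, whose root set is an $\F_q$-subspace of $\F_{q^t}$ of dimension $0$, $1$, or $2$; what must be excluded is dimension $2$. The degenerate configurations, where $u_i=0$ or $v_i=0$ for some solution, should be disposed of first: then $\psi_{m,h,s}(x_i)=cx_i$ forces $cu_i\in\im M$ or $cv_i\in\im L_m$, and Propositions \ref{product-a}, \ref{product-b} together with the change-of-basis formula of Lemma \ref{bases+split} pin $c$ down so tightly that either no second independent solution survives, or one obtains a forbidden identity such as $h^{q^{2s}+1}=1$ or $h^{q^{s(t-2)}}=-h$, contradicting Proposition \ref{p:hcondition}.

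The core is the non-degenerate case. Here the goal is to show that if $x_1,x_2$ are $\F_q$-independent then the reduced $q^s$-linearized equation has a $2$-dimensional $\F_q$-kernel, and that this is equivalent — after normalising and feeding the element $M(x_1)/\bigl(x_2(h^{q^s}+h^{q^{s(t-1)}})\bigr)\in\ker\Tr_{q^{2t}/q^t}$ (Proposition \ref{prop-W}) into the criterion for that polynomial to split completely over $\F_{q^t}$ — to the existence of $w\in\ker\Tr_{q^{2t}/q^t}$ with $m=w^{q^s+1}$ or $m=w^{q^s-1}$, i.e. $m\in\mathscr{P}^{+}_s$ or $m\in\mathscr{P}^{-}_s$. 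When $t$ is even, or $t$ is odd and $q\equiv1\pmod 4$, both alternatives are unobstructed (with $\mathrm{N}_{q^{2t}/q^t}(h)=-1$ the parities of $(q^s\pm1)/2$, which coincide with those of $(q\pm1)/2$ since $s$ is odd, impose nothing extra), so $m\notin\mathscr{P}^{+}_s\cup\mathscr{P}^{-}_s$ closes the argument. When $t$ is odd and $q\equiv3\pmod 4$, a sign-chase in the spirit of the proof of Lemma \ref{lem:P}(ii) — raise the pertinent square-root identity to the $q^t$-th power and use $w^{q^t}=-w$ on $\ker\Tr_{q^{2t}/q^t}$ — rules out $m=w^{q^s+1}$ and leaves only $m=w^{q^s-1}$; since $m\in\mathscr{P}^{+}_s$, $\mathscr{P}^{+}_s\cap\mathscr{P}^{-}_s=\{0\}$ by Lemma \ref{lem:P} and $m\neq0$, this cannot happen, and $\psi_{m,h,s}$ is scattered.

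The main obstacle I anticipate is the bookkeeping in the elimination step: carrying the basis $\{\alpha,\beta\}$, the multiplication-by-$c$ matrix and the change of $\F_{q^t}$-basis through so that the condition ``the reduced $q^s$-linearized polynomial has a $2$-dimensional kernel'' collapses precisely to ``$m$ is a $(q^s\pm1)$-st power of an element of $\ker\Tr_{q^{2t}/q^t}$'', with the correct sign. Pinning down that sign is exactly what separates the three sub-cases and accounts for why $\mathscr{P}^{-}_s$, but not $\mathscr{P}^{+}_s$, must be excluded when $t$ is odd and $q\equiv3\pmod 4$.
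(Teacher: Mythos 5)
Your framework---decomposing a solution as $u+v$ with $u\in\ker L_m$, $v\in\ker M$, exploiting that $L_m$ is $q^s$-semilinear and $M$ is $q^{s(t-1)}$-semilinear over $\F_{q^t}$, reducing to a semilinear system over $\F_{q^t}$, and aiming to show that a two-dimensional solution space forces $m$ to be a $(q^s\pm1)$-th power of an element of $\ker\Tr_{q^{2t}/q^t}$---is the same circle of ideas as the paper's proof, but the proposal stops exactly where the real work begins. The decisive claim, that after elimination ``the reduced equation has a $2$-dimensional kernel'' collapses precisely to ``$m=w^{q^s+1}$ or $m=w^{q^s-1}$ with $w\in\ker\Tr_{q^{2t}/q^t}$, with the correct sign'', is asserted, not derived; carrying the coefficients through and extracting exactly these conditions is the entire content of the paper's Cases 1, 2, 3.1 and 3.2 (the systems \eqref{system1} and \eqref{complete-linear-system}, the analysis of when the coefficient of $a^{q^s}$ vanishes leading to \eqref{eq:m(q+1)}, and the repeated argument that $m=z^{q^s-1}$ forces $z^{q^{st}}=-z$, i.e.\ $\mu=-1$). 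What actually emerges from the computation is the \emph{signed} identity $m=-w^{q^s+1}$ for a specific $w\in\ker\Tr_{q^{2t}/q^t}$ (Proposition \ref{prop-W}), and whether this yields $m\in\mathscr{P}^+_{s}$ (so a contradiction when $t$ is even or $q\equiv1\pmod 4$) or $m\notin\mathscr{P}^+_{s}$ (so a contradiction when $t$ is odd and $q\equiv3\pmod 4$, where the hypothesis is $m\in\mathscr{P}^+_{s}$) is precisely the sign analysis you defer as ``the main obstacle I anticipate''. Moreover, phrased as a biconditional your non-degenerate claim is essentially Conjecture \ref{conj}, which the authors leave open; even the one implication you need cannot simply be announced. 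The elimination step also needs more care than ``exclude a $2$-dimensional kernel of a $q^s$-degree-$2$ linearized polynomial'': one must rule out that the reduced polynomial is identically zero and that distinct solutions $(\lambda,\mu)$ project to the same $\lambda$, which in the paper is the vanishing-minor discussion of Cases 3.1/3.2.

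There is also a concrete misdirection: you propose to settle the degenerate configurations ($u_i=0$ or $v_i=0$) by deriving forbidden identities on $h$ alone, such as $h^{q^{2s}+1}=1$ or $h^{q^{s(t-2)}}=-h$, and invoking Proposition \ref{p:hcondition}. That is not how these cases close. In the paper they are Cases 1 and 2, and the contradiction there comes from the hypothesis on $m$: the existence of such a solution forces $m=z^{q^s-1}$ with $z^{q^{st}}=-z$, i.e.\ $m\in\mathscr{P}^-_{s}$, excluded either directly or, when $m\in\mathscr{P}^+_{s}$, via Lemma \ref{lem:P}$(ii)$. An argument that uses only properties of $h$ in these cases would in effect claim that the restriction $m\notin\mathscr{P}^-_{s}$ is irrelevant there, contrary to how the obstruction actually appears (and to the expected necessity of the conditions, cf.\ Theorem \ref{q-1} and Conjecture \ref{conj}). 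So, as it stands, the proposal is a plausible outline aligned with the paper's strategy, but the core computations establishing the dichotomy on $m$ are missing and the treatment of the degenerate cases would fail.
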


\begin{proof}
Let $\psi(x):=\psi_{m,h,s}(x)$ and note that the property of being a scattered polynomial can be rephrased in the following way:  for each $x\in\F_{q^n}^*$ and for each $\gamma\in\F_{q^n}$ such that 
\begin{equation}\label{e:psirx}
\psi(\gamma x)=\gamma \psi(x)
\end{equation}
then $\gamma \in\F_q$ holds.
Recall that 
\begin{equation*}
\psi(x)=L_m(x)+M(x),
\end{equation*}
and by Proposition \ref{directsum}
any $x \in\F_{q^n}$ can be uniquely written as $x=x_1+x_2$, where $x_1 \in \ker L_m$ and $x_2 \in \ker M$. Similarly, by Lemma \ref{bases+split} , if $\gamma \in \F_{q^n}$ there are exactly two elements $\lambda_1,\mu_1 \in \F_{q^t}$ and two elements $\lambda_2, \mu_2 \in \F_{q^t}$  such that $$\lambda_1+\mu_1\rho=\gamma= \lambda_2+\mu_2\tau$$ where
$\rho \in \ker R$ and $\tau= h^{q^{s(t-1)}-q^s}\rho\in\ker T$. If $a=\mu_1\rho$ and $b=\mu_2\tau$, then $a\in \ker R$, $b\in \ker T$, and Condition \eqref{e:psirx} may be re-written as follows
\begin{equation}\label{notsplit}
\begin{split}
L_m((\lambda_1+a)(x_1+x_2))+&M((\lambda_2+b)(x_1+x_2))=\\
& = (\lambda_2+b)L_m(x_1+x_2)+(\lambda_1+a)M(x_1+x_2).
\end{split}
\end{equation}
Since $x_1 \in \ker L_m$ and $x_2 \in \ker M$, it follows from  Propositions \ref{product-a}$(ii)$ and \ref{product-b}$(ii)$ that
\begin{equation*}
\begin{split}
 L_m(\lambda_1 x_2)+L_m(a x_1)&+M(\lambda_2x_1) +M(b x_2)=\lambda_2L_m(x_2)+b L_m(x_2)+\lambda_1M(x_1)+a M(x_1).
\end{split}
\end{equation*}
and hence
\begin{equation}\label{splitted}
\begin{split}
 \lambda_1 ^{q^s}L_m(x_2)+L_m(a x_1)&-\lambda_2L_m(x_2)-a M(x_1)= b L_m(x_2)+\lambda_1M(x_1)-\lambda_2^{q^{s(t-1)}}M(x_1)-M(b x_2).
\end{split}
\end{equation}
By Point $(iii)$ of Propositions 
\ref{product-a} and \ref{product-b}, the expressions on the left and right hand sides of \eqref{splitted} belong to $\im L_m$ and $\im  M$, respectively. So, by Proposition \ref{directsum}, we have 
\begin{equation*}
\begin{cases}
L_m(ax_1)-aM(x_1)=(\lambda_2-\lambda^{q^s}_1)L_m(x_2)\\
bL_m(x_2)-M(bx_2)=(\lambda_2^{q^{s(t-1)}}-\lambda_1)M(x_1).
\end{cases}
\end{equation*}
Raising to the $q^s$-th power the second equation, we get
\begin{equation}\label{system1}
\begin{cases}
L_m(ax_1)-aM(x_1)=(\lambda_2-\lambda^{q^s}_1)L_m(x_2)\\
b^{q^s}L_m(x_2)^{q^s}-M(bx_2)^{q^s}=(\lambda_2-\lambda_1^{q^s})M(x_1)^{q^s}.
\end{cases}
\end{equation}
Since $a=\mu_1 \rho$, $b=\mu_2\tau$ and $\tau=h^{q^{s(t-1)}-q^s}\rho$, by Lemma \ref{bases+split}, it follows
\begin{equation*}
\lambda_1+\mu_1\rho=\lambda_1+\mu_1\rho(1-h^{q^{s(t-1)}-q^s})+\mu_1\tau=\lambda_2+\mu_2\tau.\end{equation*}
Since $\{1,\tau\}$ is an $\F_{q^t}$-basis of $\F_{q^{2t}}$ and $\rho(1-h^{q^{s(t-1)}-q^{s}})$ belongs to $\F_{q^t}$, we get $\mu_1=\mu_2$ and $b=h^{q^{s(t-1)}-q^s}a$. 
First, we deal with the case $a=0$. In this case, $\mu_1=0$ and hence $\gamma=\lambda_1=\lambda_2\in\F_{q^t}$. Now, if $\lambda_1\neq \lambda_1^{q^s}$ then \eqref{system1} yields $L_m(x_2)=M(x_1)=0$. By Proposition \ref{directsum}, $x=x_1=x_2=0$, a contradiction. Then $\lambda_1=\lambda_2=\lambda_1^{q^s}$, which gives $\lambda_1\in\F_q$, i.e. $\gamma \in\F_q$ holds. 

Therefore, in the remainder of the proof  we assume $a\neq 0$. This implies $\gamma\in\F_{q^{2t}}\setminus\F_{q^t}$.
In order to deal with the case $a\neq 0$, we shall split our discussion into three cases.
\medskip

\noindent\textbf{Case 1:} $x_1=0$. In this case System \eqref{system1} reads
    \begin{equation*}
	\begin{cases}\label{x_1=0}
	(\lambda_2-\lambda^{q^s}_1)L_m(x_2)=0\\
	b^{q^s}L_m(x_2)^{q^s}-M(bx_2)^{q^s}=0.
	\end{cases}
	    \end{equation*}
{
From the second equation we have 
\begin{equation}\label{case1.1}
	   b^{q^s} =\frac{M(bx_2)^{q^s}}{L_m(x_2)^{q^s}}=\frac{b^{q^{st}}x_2^{q^{st}}+h^{q^s-1}bx_2}{m^{q^s}\left(x_2^{q^{s}}-h^{1-q^{s(t+1)}}x_2^{q^{s(t+1)}}\right)^{q^s}}.
	\end{equation}
Now, by the assumptions $b\in \ker T$, $x_2\in \ker M$ and $h^{q^t+1}=-1$, i.e.
$$b^{q^{st}}=-h^{q^s-q^{s(t-1)}}b, \quad x_2^{q^{st}}=-h^{q^{s}-1}x_2, \quad h^{q^{st}}=-1/h.
$$
Then, \eqref{case1.1} reads as
\begin{equation}\label{case1.2}
	   b^{q^s} =\frac{h^{q^s-1}bx_2(1+h^{q^s-q^{s(t-1)}})}{m^{q^s}\left(x_2^{q^{s}}-h^{q^{2s}+1}x_2^{q^{s}}\right)^{q^s}}=\frac{h^{q^s-1}bx_2(1+h^{q^s-q^{s(t-1)}})}{m^{q^s}x_2^{q^{2s}}\left(1+h^{q^{s}-q^{s(t-1)}}\right)^{q^{2s}}},
	\end{equation}
    and so
\begin{equation}\label{case1}
    b^{q^s-1}=\frac{1}{m^{q^s}}\left (\frac{h}{ \left (x_2(1+h^{q^s-q^{s(t-1)}} )\right )^{q^s+1}} \right ) ^{q^s-1}.
\end{equation}
}
Therefore, there exists $z\in\mathbb{F}_{q^{2t}}^*$ such that $z^{q^s-1}=m$. Then $z^{q^{st}-1}=m^{\frac{q^{st}-1}{q^s-1}}$ and hence $z^{q^{st}}=\mu z$ for some $\mu\in\mathbb{F}_{q}^*$, with  $\mu \ne -1$ (otherwise $m \in \mathscr{P}_s^-$).
From (\ref{case1}) we obtain
that there exists $\lambda \in \mathbb{F}_{q}^*$ such that 
	\begin{equation*}
	    b=\lambda \cdot \frac{h}{z^{q^s}\left(x_2(1+h^{q^s-q^{s(t-1)}})\right)^{q^s+1}}.
	\end{equation*}
	Since $b \in \ker T$, then $b^{q^{st}}+h^{q^s-q^{s(t-1)}}b=0$. By Formula above and since $z^{q^{st}}=\mu z$, we have that
	\begin{equation*}
	\frac{h^{q^{st}}}{\mu z^{q^s}\left(x_2(1+h^{q^s-q^{s(t-1)}})\right)^{q^{st}(q^s+1)}} +   \frac{h^{1+q^s-q^{s(t-1)}}}{z^{q^s}\left(x_2(1+h^{q^s-q^{s(t-1)}})\right)^{q^s+1}}=0.
	\end{equation*}
Since $x_2 \in \ker M$ and $h \ne 0$,
	\begin{equation*}
	\Biggl( \frac{1+h^{q^s-q^{s(t-1)}}}{
	h^{q^s-1}(1+h^{q^{s(t-1)}-q^s})} \Biggr )^{q^s+1}=-\mu h^{1+q^s-q^{s(t-1)}-q^{st}}.
	\end{equation*}
	This is equivalent to
	\begin{equation*}
	\Biggl( \frac{h^{q^{s(t-1)}-1}(1+h^{q^s-q^{s(t-1)}})}{
	h^{q^s-1}(1+h^{q^{s(t-1)}-q^s})} \Biggr )^{q^s+1}=-\mu.
	\end{equation*} 
 On the other hand, we have $$\frac{h^{q^{s(t-1)}-1}(1+h^{q^s-q^{s(t-1)}})}{
	h^{q^s-1}(1+h^{q^{s(t-1)}-q^s})}  =1.$$
Then $\mu = -1$, getting a contradiction.
\medskip

\noindent\textbf{Case 2:} $x_2=0$. 
In this case \eqref{system1} reads
	   \begin{equation*}
	\begin{cases}
	L_m(ax_1)-aM(x_1)=0\\
	(\lambda_2-\lambda_1^{q^s})M(x_1)^{q^s}=0.
	\end{cases}
	    \end{equation*}
        From the first equation we have 
\begin{equation}\label{case2.1}
	   m(a^\qs x_1^\qs-h^{1-q^{s(t+1)}}a^{q^{s(t+1)}}x_1^{q^{s(t+1)}})-a(x_1^{q^{s(t-1)}}+h^{1-q^{s(2t-1)}}x_1^{q^{s(2t-1)}})=0.
	\end{equation}
        Now, by the assumptions $a\in \ker R$, $\mathrm{N}_{q^{2t}/q^t}(h)=-1$ and $x_1\in \ker L_m$, we have
\begin{equation*}
	    m(a^\qs x_1^\qs+h^{1+q^{s}}(h^{-1-q^{2s}}a^\qs)(-h^{-1-\qs}x_1^\qs))-a(x_1^{q^{s(t-1)}}-h^{1+q^{s(t-1)}}(-h^{-q^{s(t-2)}-q^{s(t-1)}}x_1^{q^{s(t-1)}})=0,
	\end{equation*}
and so 
\begin{equation}\label{case2}
	    a^{\qs-1}=x_1^{q^{s(t-1)}-\qs}\frac{(1+h^{1-q^{s(t-2)}})}{m(1-h^{-1-q^{2s}})}=\frac{\left(x^{\qs}_1(1-h^{-q^{2s}-1})\right)^{q^{s(t-2)}-1}}{m},
	\end{equation}

 If $m$ is not a $(q^s-1)$-power, we have a contradiction. So let $z\in\mathbb{F}_{q^{2t}}^*$ such that $z^{q^s-1}=m$. By the hypothesis on $m$,  $z^{q^{st}}=\mu z$ with $\mu\in\mathbb{F}_{q}^*$ for some $\mu \in \F_q^*$ with $ \mu\neq -1$.
By  \eqref{case2}, we obtain
that there exists $\lambda \in \mathbb{F}_{q}^*$ such that 
	\begin{equation*}
	a=\lambda \frac{\left(x^\qs_1(1-h^{-q^{2s}-1})\right)^\nu}{z},
	\end{equation*}
	where $\nu=(q^{s(t-2)}-1)/(q^s-1)$.
	
	Since $a \in \ker R$ and $h^{-q^{s(t+2)}}=-h^{q^{2s}}$, then
	\begin{equation*}
	    \frac{\left(x_1^{q^{s(t+1)}}(1+h^{q^{2s}-q^{st}})\right)^\nu}{\mu z}+h^{q^{s(t-1)}-q^s}\frac{\left(x_1^\qs(1-h^{-q^{2s}-1})\right)^\nu}{z}=0.
	\end{equation*}
	Moreover, since $x_1 \in \ker L_m$, $x_1^{q^{s(t+1)}}=h^{q^{s(t+1)}-1}x_1^{q^s}$, and so
	\begin{equation*}
	     \Biggl (\ \frac{h^{q^{s(t+1)}-1} (1+h^{q^{2s}-q^{st}})}{1+h^{q^{st}-q^{2s}}}  \Biggr)^\nu=-\mu h^{q^{s(t-1)}-q^s},
	\end{equation*}
    or equivalently
	  \[ \Biggl (\ \frac{h^{q^{st}-\qs} (1+h^{q^{2s}-q^{st}})}{h^{q^{2s}-\qs}(1+h^{q^{st}-q^{2s}})}  \Biggr)^\nu=-\mu. \]
   On the other hand, 
   \[ \Biggl (\ \frac{h^{q^{st}-\qs} (1+h^{q^{2s}-q^{st}})}{h^{q^{2s}-\qs}(1+h^{q^{st}-q^{2s}})}  \Biggr)^\nu=1. \]
	Since $\mu\neq -1$, we have a contradiction. 

\noindent\textbf{Case 3:} $x_1,x_2 \neq 0$. Recall that $a \in \ker R$, $b=h^{q^{s(t-1)}-\qs}a$, $\lambda_2 = \lambda_1+(1-h^{q^{s(t-1)}-\qs})a$, $x_1 \in \ker L_m$ and $x_2 \in \ker M$. Then, by \eqref{system1}, $a$ turns out to be a nonzero solution of the following linear system in the unknowns $a,a^{q^s}$:

\begin{equation}\label{complete-linear-system}
    \begin{cases}
mx_1^\qs(1+h^{q^{st}-q^{2s}})a^\qs-\left(M(x_1)+(1-h^{q^{s(t-1)}-\qs})L_m(x_2)\right)a=(\lambda_1-\lambda^\qs_1)L_m(x_2)\\
h^{q^{st}-q^{2s}}L_m(x_2)^\qs a^\qs+\left(x_2^{q^{st}}(1+h^{q^{s(t-1)}-\qs})-(1-h^{q^{s(t-1)}-\qs})M(x_1)^\qs\right)a=(\lambda_1-\lambda_1^\qs)M(x_1)^\qs.
\end{cases}
\end{equation}

\noindent\textit{- Case 3.1}. First of all, suppose that $\lambda_1 \in \mathbb{F}_q$, then System \eqref{complete-linear-system} becomes

\begin{equation}\label{eq:case3.1_star}
    \begin{cases}
mx_1^\qs(1+h^{q^{st}-q^{2s}})a^\qs-\left(M(x_1)+(1-h^{q^{s(t-1)}-\qs})L_m(x_2)\right)a=0\\
h^{q^{st}-q^{2s}}L_m(x_2)^\qs a^\qs+\left(x_2^{q^{st}}(1+h^{q^{s(t-1)}-\qs})-(1-h^{q^{s(t-1)}-\qs})M(x_1)^\qs\right)a=0.
\end{cases}
\end{equation}
Since $a$ is a nonzero solution then
\begin{eqnarray}\label{determinant-incomplete}
 mx_1^\qs(1+h^{q^{st}-q^{2s}})\left(x_2^{q^{st}}(1+h^{q^{s(t-1)}-\qs})-(1-h^{q^{s(t-1)}-\qs})M(x_1)^\qs\right)=\nonumber\\=
 -h^{q^{st}-q^{2s}}L_m(x_2)^\qs\left(M(x_1)+(1-h^{q^{s(t-1)}-\qs})L_m(x_2)\right).
\end{eqnarray}

Since $L_m(x_2) \neq 0 \neq M(x_1)$, from \eqref{eq:case3.1_star} we get
$$M(x_1)^\qs\left(mx_1^\qs(1+h^{q^{st}-q^{2s}})a^\qs-M(x_1)a\right)=L_m(x_2)\left(h^{q^{st}-q^{2s}}L_m(x_2)^\qs a^\qs+x_2^{q^{st}}(1+h^{q^{s(t-1)}-\qs})a\right),$$
and equivalently
\begin{align}
\nonumber \left(mx_1^\qs M(x_1)^\qs(1+h^{q^{st}-q^{2s}}) - h^{q^{st}-q^{2s}}L_m(x_2)L_m(x_2)^\qs\right)a^\qs=\\
=\left(M(x_1)M(x_1)^\qs+x_2^{q^{st}}(1+h^{q^{s(t-1)}-\qs})L_m(x_2)\right)a. \label{Fq-case}
\end{align}

Next we want to show that the coefficient of $a^\qs$ in \eqref{Fq-case} cannot be $0$. By way of contradiction, suppose that
\begin{equation}\label{D2}
	mx_1^\qs(1+h^{q^{st}-q^{2s}})M(x_1)^\qs=h^{q^{st}-q^{2s}}L_m(x_2)L_m(x_2)^{q^s}.
\end{equation}

We have that also the coefficient of $a$ is $0$, from it we obtain
\begin{equation}\label{eq:m(q+1)}
    \begin{split}
         m&=-\frac{M(x_1)^{q^s+1}}{x_2^{q^{st}+q^s}(1+h^{1-q^{s(t+2)}})(1+h^{q^{s(t-1)}-q^s})}\\
         &=\frac{M(x_1)^{q^s+1}}{x_2^{q^s+1}h^{q^s-1}(1+h^{q^{s(t-1)}-q^s})(1+h^{1-q^{s(t+2)}})}\\
         &=\frac{M(x_1)^{q^s+1}}{x_2^{q^s+1}h^{q^s-1}(1+h^{q^{s(t-1)}-q^s})(1+h^{q^{2s}-q^{st}})}=\\
         &=\frac{M(x_1)^{q^s+1}}{x_2^{q^s+1}h^{q^s-1}(1+h^{q^{s(t-1)}-q^s})(1+h^{q^{s}-q^{s(t-1)}})^{q^s}}\\
         &=-\frac{M(x_1)^{q^s+1}}{x_2^{q^s+1}(h^{q^{s(t-1)}}+h^{q^{s}})^{q^s+1}}.    
    \end{split}
\end{equation}

If  $t$ is even, $-1$ is the $(q^s+1)$-th power of an element in $\F_{q^t}$ and by Proposition \ref{prop-W}, $m \in \mathscr{P}_s^+$ against the hypotheses. Similarly, if $t$ is odd and $q\equiv  1 \pmod 4$, then $-1$ is a square in $\F_q$ and $m \in \mathscr{P}_s^+$. 
In the case $t$ odd and $q\equiv 3 \pmod 4$, for every $z\in\F_{q^{2t}}^*$ such that $m=z^{1+q^s}$, we have $\Tr_{q^{2t}/q^t}(z)\neq 0$.  Indeed, let $$\alpha=\frac{M(x_1)}{x_2(h^{q^{s(t-1)}}+h^{q^{s}})}$$
and  $\mu^{q^s+1}=-1$ for some $\mu  \in \F_{q^{2}}\setminus \F_{q^t}$. Then $m=(\mu \alpha)^{q^{s}+1}$, and by Proposition \ref{prop-W} we have $\alpha \in \ker \Tr_{q^{2t}/q^t}$. Now, let $z \in \F_{q^{2t}}^*$ be such that $m=z^{q^s+1}$. Then there exists $\theta \in \F_{q^{2t}}^*$ such that $\theta^{q^s+1}=1$ and $z=\theta \mu \alpha$. Since $\gcd(q^s+1,q^{2t}-1)=q+1$, we get $\theta^{q+1}=1$. In particular $\theta\in\mathbb F_{q^2}$, and, since $t$ is odd,
$\theta^{q^t}=\theta^q$.
Similarly, $\mu^{q^t}=\mu^q$. Therefore,
$$\Tr_{q^{2t}/q^t}(z)= (\theta \mu - \theta^{q}\mu^{q}) \alpha.$$ Now suppose that $\Tr_{q^{2t}/q^t}(z)=0$, then $\theta \mu \in \F_{q}$. However, $(\theta\mu)^{q+1}=-1$ and if  $\theta\mu\in\mathbb F_q$, then this would imply $(\theta\mu)^2=-1$, getting that $-1$ is a square in $\F_q$. Since $q \equiv 3 \pmod 4$, this is not the case. Then,  $\Tr_{q^{2t}/q^t}(z)\neq 0$ and, hence, $m\notin \mathscr{P}_s^+$, a contradiction. 


Then, by \eqref{Fq-case}, we get 
\begin{equation}\label{eq:a^q-1}
\begin{split}
    a^{\qs-1}&=\frac{M(x_1)M(x_1)^\qs-h^{q^{st}-q^{s(t+1)}}x_2L_m(x_2)(1+h^{q^{s(t-1)}-\qs})}{mx_1^\qs(1+h^{q^{st}-q^{2s}})M(x_1)^\qs-h^{q^{st}-q^{2s}}L_m(x_2)L_m(x_2)^\qs}=\\
    &=\frac{h^{\qs-1}}{m} \cdot \frac{1+h^{q^{s(t-1)}-\qs}}{1+h^{q^{st}-q^{2s}}} \cdot \frac{x_1M(x_1)-x_2L_m(x_2)}{x_1^\qs M(x_1)^\qs-x_2^\qs L_m(x_2)^\qs}=\\
    &=\frac{1}{m}\cdot\Biggl( \frac{h}{(1+h^{q^{s(t-1)}-\qs})(x_1M(x_1)-x_2L_m(x_2))} \Biggr)^{\qs-1}.
\end{split}
\end{equation}
If $m$ is not a $(q^s-1)$-power we have a contradiction. So, let $z\in\mathbb{F}_{q^{2t}}^*$ such that $z^{q^s-1}=m$. From the hypothesis on $m$ we have that $z^{q^{st}}=\mu z$ with $\mu\in\mathbb{F}_{q}^*$ and $ \mu\neq -1$.
From \eqref{eq:a^q-1} we obtain
that there exists $\lambda \in \mathbb{F}_{q}^*$ such that 
\begin{equation*}
    a=\lambda \cdot \frac{h}{z(1+h^{q^{s(t-1)}-\qs})(x_1M(x_1)-x_2L_m(x_2))}.
\end{equation*}
Since $a \in \ker R$,
\begin{equation}\label{ainKerR}
\begin{aligned}
&\frac{h^{q^{st}}}{\mu z\left((1+h^{q^{s(t-1)}-\qs})(x_1M(x_1-x_2L_m(x_2))\right)^{q^{st}}}\\
&  + h^{q^{s(t-1)}-\qs}\cdot \frac{h}{z(1+h^{q^{s(t-1)}-\qs})(x_1M(x_1)-x_2L_m(x_2))}=0.
\end{aligned}
\end{equation}
Recalling that $x_1 \in \ker L_m$ and $x_2 \in \ker M$    we have that 
\begin{equation*}
(x_1M(x_1)-x_2L_m(x_2))^{q^{st}}=h^{q^{st}-1}(x_1M(x_1)-x_2L_m(x_2)) 
\end{equation*}
and hence 
\begin{equation}\label{x1M-x2L}
\begin{aligned}
&\frac{h^{q^{st}}}{\mu zh^{q^{st}-1}\left(1+h^{q^{s(t-1)}-\qs}\right )^{q^{st}} (x_1M(x_1)-x_2L_m(x_2)}\\
&+ h^{q^{s(t-1)}-\qs}\cdot \frac{h}{z(1+h^{q^{s(t-1)}-\qs})(x_1M(x_1)-x_2L_m(x_2))}=0.
\end{aligned}
\end{equation}

which means $\mu=-1$, a contradiction.
\medskip

\noindent- $\textit{Case 3.2}$. Let $\lambda_1\notin\F_q$ and let $a$ be a nonzero solution of System \eqref{complete-linear-system}. 
Since $L(x_2) \neq 0 \neq M(x_1)$, multiplying the first and the second equation of \eqref{complete-linear-system}  by $M(x_1)^{q^s}$ and by $L_m(x_2)$, respectively, we obtain \eqref{Fq-case}.
If this system admits more than one solution, then each $2\times 2$ minor of the associated matrix of \eqref{complete-linear-system} is zero. In particular Equations \eqref{D2} holds true, obtaining a contradiction as in the previous case.

Then, System \eqref{complete-linear-system} must admits a unique nonzero solution $(a,a^\qs)  \in \F_{q^{2t}}^2$. By \eqref{Fq-case}, we get

\begin{equation*}
\begin{split}
    a^{\qs-1}=\frac{M(x_1)M(x_1)^\qs-h^{q^{st}-q^{s(t+1)}}x_2L_m(x_2)(1+h^{q^{s(t-1)}-\qs})}{mx_1^\qs(1+h^{q^{st}-q^{2s}})M(x_1)^\qs-h^{q^{st}-q^{2s}}L_m(x_2)L_m(x_2)^\qs}.
\end{split}
\end{equation*}This is again Equation \eqref{eq:a^q-1}. 
Repeating the arguments as in \textit{Case 3.1}, we get a contradiction. \qedhere
\end{proof}

\subsection{Case \texorpdfstring{$\mathrm{N}_{q^{2t}/q^t}(h)=1$}{N(h) = 1}}

In this subsection, we will prove Theorem \ref{main} in the case of $h \in \F_{q^{2t}} \setminus \F_{q^t}$ with $\mathrm{N}_{q^{2t}/q^t}(h)=1$. 
First of all, note that if $q \equiv 1 \pmod 4$, putting $k=\lambda h $ such that $\lambda \in \F_{q}$ with $\lambda^2=-1$. Then, $\mathrm{N}_{q^{2t}/q^t}(k)=-1$ and $\psi_{m,k,s}=\psi_{m,h,s}$. Hence, by Theorem \ref{t:main}, $\psi_{m,h,s}$ is scattered. Therefore, consider the case $q \equiv 3 \pmod{4}$.

\begin{prop}\label{h2=-1}
    Let $t\geq 3$ odd and $q \equiv 3 \pmod{4}$. Let $h\in \F_{q^{2t}} \setminus \F_{q^t}$ such that $h^2= -1$. Then $\psi_{m,h,s}$ is not scattered for any $m\in\Fqt$ and $s$ a positive integer such that $\gcd(s,2t)=1$.
\end{prop}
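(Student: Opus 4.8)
The plan is to show that $\psi_{m,h,s}$, viewed as an $\F_q$-linear endomorphism of $\F_{q^{2t}}$, has rank at most $t$; since its kernel then has $\F_q$-dimension at least $2t-t=t\ge 3$, it has more than $q$ roots and so cannot be scattered (taking $m=0$ in the definition of scattered forces $\dim_{\F_q}\ker f\le 1$ for any scattered $f$).

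The first step is to normalize the coefficients. Because $\gcd(s,2t)=1$ and $2\mid 2t$, the integer $s$ is odd; and since $q\equiv 3\pmod 4$ the polynomial $X^2+1$ is irreducible over $\F_q$, so $h\in\F_{q^2}$, and as $h\notin\F_{q^t}$ (in particular $h\notin\F_q$) one has $h^{q^j}=(-1)^jh$ for every $j$. As $s$ and $2t-1$ are odd while $t+1$ is even, this gives $h^{q^{s(t+1)}}=h$ and $h^{q^{s(2t-1)}}=-h$, hence $h^{1-q^{s(t+1)}}=1$ and $h^{1-q^{s(2t-1)}}=-1$. Moreover, $s$ odd yields $2t\mid t(s-1)$, so that $x^{q^{s(t+1)}}=(x^{q^s})^{q^t}$ and $x^{q^{s(t-1)}}=(x^{q^{s(2t-1)}})^{q^t}$ for all $x\in\F_{q^{2t}}$. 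Substituting these into \eqref{quadrinomial}, I would obtain, for every $x\in\F_{q^{2t}}$,
\[
\psi_{m,h,s}(x)=m\bigl(x^{q^s}-(x^{q^s})^{q^t}\bigr)-\bigl(x^{q^{s(2t-1)}}-(x^{q^{s(2t-1)}})^{q^t}\bigr).
\]
The point is then that $w-w^{q^t}\in\ker\mathrm{Tr}_{q^{2t}/q^t}$ for every $w\in\F_{q^{2t}}$, since $\mathrm{Tr}_{q^{2t}/q^t}(w-w^{q^t})=(w-w^{q^t})+(w^{q^t}-w)=0$. As $m\in\Fqt$ and $\ker\mathrm{Tr}_{q^{2t}/q^t}$ is an $\Fqt$-subspace of $\F_{q^{2t}}$ of $\F_q$-dimension $t$ (Lemma \ref{power}$(i)$), the displayed identity shows $\im\psi_{m,h,s}\subseteq\ker\mathrm{Tr}_{q^{2t}/q^t}$, whence $\dim_{\F_q}\ker\psi_{m,h,s}\ge t\ge 3$. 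Choosing two $\F_q$-linearly independent nonzero elements $y,z$ in this kernel, we have $\psi_{m,h,s}(y)/y=0=\psi_{m,h,s}(z)/z$ with $y,z$ not $\F_q$-proportional, so \eqref{scattpol} fails and $\psi_{m,h,s}$ is not scattered.

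I do not anticipate a genuine obstacle: the substance is entirely the rank collapse $\im\psi_{m,h,s}\subseteq\ker\mathrm{Tr}_{q^{2t}/q^t}$, which becomes transparent once the coefficients are normalized. The one step requiring care is the exponent bookkeeping in the normalization — checking that the parity of $s$ (forced by $\gcd(s,2t)=1$) is precisely what makes $h^{q^{s(t+1)}}=h$, $h^{q^{s(2t-1)}}=-h$, and the two Frobenius identities hold — which is a routine verification with congruences modulo $2t$ and modulo $4$.
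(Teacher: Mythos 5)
Your proof is correct and takes essentially the same route as the paper: after the identical normalization $h^{1-q^{s(t+1)}}=1$, $h^{1-q^{s(2t-1)}}=-1$ (forced by $h^2=-1$, $h\notin\F_q$ and the parity of the exponents), you exploit the same rank collapse of $\psi_{m,h,s}$ through the map $x\mapsto x-x^{q^{st}}$, only phrased on the image side ($\im\psi_{m,h,s}\subseteq\ker\Tr_{q^{2t}/q^t}$ plus rank--nullity) instead of the paper's direct remark that $\F_{q^t}\subseteq\ker\psi_{m,h,s}$. Either way the kernel has dimension at least $t\ge 3$, so the conclusion follows.
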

\begin{proof}
    Fix $h\in\F_{q^{2t}}$ such that $h^2=-1$, then $$\psi_{m,h,s}=m(X^{q^s}-X^{q^{s(t+1)}})+X^{q^{s(t-1)}}-X^{q^{s(2t-1)}}=m(X-X^{q^{st}})^{q^s}+(X-X^{q^{st}})^{q^{s(t-1)}}.$$
    Hence, $\Fqt\subset\ker \psi_{m,h,s} $, so $\psi_{m,h,s}$ is not scattered.
\end{proof}

\begin{theorem}\label{norma=1}
	Let $t\geq 3$ and $q \equiv 3 \pmod{4}$ and $s$ be an integer such that $\gcd(s,2t)=1$. For each $h \in \F_{q^{2t}} \setminus \F_{q^t}$ such that $\mathrm{N}_{q^{2t}/q^t}(h)=1$ with $h^2 \neq -1$ and $m \in \mathbb{F}_{q^t} \setminus (\mathscr{P}_s^+ \cup \mathscr{P}_s^{-})$, the $q^s$-linearized polynomial
 \begin{equation*}
 \psi_{m,h,s}=m(X^{q^s}-h^{1-q^{s(t+1)}}X^{q^{s(t+1)}})+X^{q^{s(t-1)}}+h^{1-q^{s(2t-1)}}X^{q^{s(2t-1)}}
 \end{equation*}
 is scattered.
\end{theorem}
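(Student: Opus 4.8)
The plan is to reproduce the proof of Theorem~\ref{t:main} essentially verbatim, the only global change being that the identity $h^{q^{st}+1}=-1$ (equivalently $h^{q^{st}}=-1/h$) used there is everywhere replaced by $h^{q^{st}+1}=1$ (equivalently $h^{q^{st}}=1/h$). This is legitimate because all the ingredients of that proof --- Proposition~\ref{directsum}, the description~\eqref{im-1} of $\im L_m$ and $\im M$, Propositions~\ref{product-a}, \ref{product-b}, \ref{prop-W}, \ref{p:hcondition} and Lemma~\ref{bases+split} --- remain valid for $\mathrm{N}_{q^{2t}/q^t}(h)=1$ with $h^2\neq -1$, as recorded in Section~\ref{ScattCond}. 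Setting $\psi:=\psi_{m,h,s}=L_m+M$ and using scatteredness in the form ``$\psi(\gamma x)=\gamma\psi(x)$ with $x\in\F_{q^{2t}}^*$ forces $\gamma\in\F_q$'', I would decompose $x=x_1+x_2$ ($x_1\in\ker L_m$, $x_2\in\ker M$) and $\gamma=\lambda_1+\mu_1\rho=\lambda_2+\mu_2\tau$ in the $\F_{q^t}$-bases $\{1,\rho\}$, $\{1,\tau\}$ of Lemma~\ref{bases+split}, and put $a=\mu_1\rho\in\ker R$, $b=\mu_2\tau\in\ker T$; the same computation yields System~\eqref{system1} together with $\mu_1=\mu_2$ and $b=h^{q^{s(t-1)}-q^s}a$. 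When $a=0$ one has $\gamma\in\F_{q^t}$ and \eqref{system1} forces $\lambda_1=\lambda_1^{q^s}$, i.e.\ $\gamma\in\F_q$, exactly as in Theorem~\ref{t:main}.

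For $a\neq 0$ I would keep the split into $x_1=0$, $x_2=0$, $x_1x_2\neq 0$. In the first two cases System~\eqref{system1} reduces to a single equation giving an explicit value of $b^{q^s-1}$ (resp.\ $a^{q^s-1}$); this forces $m$ to be a $(q^s-1)$-power, say $m=z^{q^s-1}$, whence (since $st\equiv t\pmod{2t}$) $z^{q^{st}}=\mu z$ for some $\mu\in\F_q^*$. Here the hypothesis $m\notin\mathscr{P}_s^-$ yields $\mu\neq -1$, since $\mu=-1$ would give $z\in\ker\mathrm{Tr}_{q^{2t}/q^t}$ and hence $m=z^{q^s-1}\in\mathscr{P}_s^-$ by~\eqref{p+p-}. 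Substituting the explicit $b$ (resp.\ $a$) into the condition $b\in\ker T$ (resp.\ $a\in\ker R$) and simplifying with $x_2\in\ker M$, $h^{q^{st}}=1/h$ (resp.\ $x_1\in\ker L_m$), one reaches $\mu=-1$, a contradiction --- these are Cases~1 and~2 of Theorem~\ref{t:main} with the sign of $h^{q^{st}}$ flipped.

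In the remaining case $x_1x_2\neq 0$ one is led, as in Case~3 of Theorem~\ref{t:main}, to the $2\times2$ linear system~\eqref{complete-linear-system} in $a,a^{q^s}$, and by combining its rows to the analogue of~\eqref{Fq-case}. If some $2\times2$ minor of~\eqref{complete-linear-system} vanishes --- automatic when $\lambda_1\in\F_q$, and forced once the system has more than one solution --- then the coefficient of $a^{q^s}$ in that analogue vanishes (the analogue of~\eqref{D2}), and then so does the coefficient of $a$, producing the analogue of~\eqref{eq:m(q+1)}. Running that chain of substitutions with $h^{q^{st}}=1/h$ instead of $h^{q^{st}}=-1/h$ gives $m=\bigl(M(x_1)/(x_2(h^{q^s}+h^{q^{s(t-1)}}))\bigr)^{q^s+1}$ --- now with a $+$ sign in place of the $-$ of~\eqref{eq:m(q+1)} --- and by Proposition~\ref{prop-W} the bracketed element lies in $\ker\mathrm{Tr}_{q^{2t}/q^t}$, so $m\in\mathscr{P}_s^+$, against the hypothesis. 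Hence that coefficient is nonzero, the analogue of~\eqref{Fq-case} can be solved for $a^{q^s-1}$ (the analogue of~\eqref{eq:a^q-1}), and imposing $a\in\ker R$ with $x_1\in\ker L_m$, $x_2\in\ker M$ gives once more $\mu=-1$, a contradiction; this handles both $\lambda_1\in\F_q$ and $\lambda_1\notin\F_q$.

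The proof being purely structural, the real work is bookkeeping: re-deriving, with $h^{q^{st}+1}=1$, the explicit formulas for $b^{q^s-1}$, $a^{q^s-1}$ and, above all, the identity for $m$ coming from the vanishing minor, keeping track of every sign. The point to be confirmed --- and the main obstacle --- is precisely that this last identity yields $+\bigl(M(x_1)/(x_2(h^{q^s}+h^{q^{s(t-1)}}))\bigr)^{q^s+1}$ and not its negative: this is what lets the single hypothesis $m\notin\mathscr{P}_s^+\cup\mathscr{P}_s^-$ suffice for every $t$, avoiding the case analysis (on whether $-1$ is a $(q^s+1)$-power in $\F_{q^t}$) that Theorem~\ref{t:main} needs when $t$ is odd. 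One should also check that $h^2\neq -1$ is exactly what makes Propositions~\ref{prop-W} and~\ref{p:hcondition} usable here, so that the relevant denominators do not vanish and the final step closes.
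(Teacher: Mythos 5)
Your plan coincides with the paper's own proof of Theorem \ref{norma=1}, which proceeds exactly as you describe: repeat the argument of Theorem \ref{t:main} with $h^{q^{st}+1}=1$ in place of $h^{q^{st}+1}=-1$, reach the same system \eqref{complete-linear-system+}, and in the degenerate subcase of Case 3 the vanishing of both coefficients gives precisely $m=\bigl(M(x_1)/(x_2(h^{q^s}+h^{q^{s(t-1)}}))\bigr)^{q^s+1}$ with the plus sign (the paper's \eqref{eq:m(q+1)+}), so Proposition \ref{prop-W} puts $m\in\mathscr{P}_s^+$ directly, with no case analysis on whether $-1$ is a $(q^s+1)$-power. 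The single computation you flag as ``to be confirmed'' is exactly the one the paper writes out, and it comes out as you predict, so the proposal is correct and essentially identical to the paper's argument.
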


\begin{proof}
By using the same notation and arguing as in the first part of the proof of Theorem \ref{t:main} we obtain the system 
\begin{equation}\label{system1+}
\begin{cases}
L_m(ax_1)-aM(x_1)=(\lambda_2-\lambda^{q^s}_1)L_m(x_2)\\
b^{q^s}L_m(x_2)^{q^s}-M(bx_2)^{q^s}=(\lambda_2-\lambda_1^{q^s})M(x_1)^{q^s}.
\end{cases}
\end{equation}
Since $a=\mu_1 \rho$, $b=\mu_2\tau$ and $\tau=h^{q^{s(t-1)}-q^s}\rho$, by Lemma \ref{bases+split}, it follows
\begin{equation*}
\lambda_1+\mu_1\rho=\lambda_1+\mu_1\rho(1-h^{q^{s(t-1)}-q^s})+\mu_1\tau=\lambda_2+\mu_2\tau.\end{equation*}
Since $\{1,\tau\}$ is an $\F_{q^t}$-basis of $\F_{q^{2t}}$ and $\rho(1-h^{q^{s(t-1)}-q^{s}})$ belongs to $\F_{q^t}$, we get $\mu_1=\mu_2$ and $b=h^{q^{s(t-1)}-q^s}a$.  The case $a=0$ can be settled as in Theorem \ref{t:main}. If $a \neq 0$, we again split our discussion into three cases.\\
\noindent\textbf{Case 1.} $x_1=0$. This case is analogous to \textbf{Case 1.} in Theorem \ref{t:main}.

\noindent\textbf{Case 2.} $x_2=0$. This case is analogous to \textbf{Case 2.} in Theorem \ref{t:main}.

\noindent\textbf{Case 3.} $x_1,x_2 \neq 0$. 

Recall that $a \in \ker R$, $b=h^{q^{s(t-1)}-\qs}a$, $\lambda_2 = \lambda_1+(1-h^{q^{s(t-1)}-\qs})a$, $x_1 \in \ker L_m$ and $x_2 \in \ker M$. Then, by \eqref{system1+}, $a$ turns out to be a nonzero solution of the following linear system in the unknowns $a$ and $a^{q^s}$:

\begin{equation}\label{complete-linear-system+}
    \begin{cases}
mx_1^\qs(1+h^{q^{st}-q^{2s}})a^\qs-\left(M(x_1)+(1-h^{q^{s(t-1)}-\qs})L_m(x_2)\right)a=(\lambda_1-\lambda^\qs_1)L_m(x_2)\\
h^{q^{st}-q^{2s}}L_m(x_2)^\qs a^\qs+\left(x_2^{q^{st}}(1+h^{q^{s(t-1)}-\qs})-(1-h^{q^{s(t-1)}-\qs})M(x_1)^\qs\right)a=(\lambda_1-\lambda_1^\qs)M(x_1)^\qs.
\end{cases}
\end{equation}

\noindent\textit{- Case 3.1} First of all, suppose that $\lambda_1 \in \mathbb{F}_q$, then System \eqref{complete-linear-system+} becomes

\begin{equation}\label{eq:case3.1_star+}
    \begin{cases}
mx_1^\qs(1+h^{q^{st}-q^{2s}})a^\qs-\left(M(x_1)+(1-h^{q^{s(t-1)}-\qs})L_m(x_2)\right)a=0\\
h^{q^{st}-q^{2s}}L_m(x_2)^\qs a^\qs+\left(x_2^{q^{st}}(1+h^{q^{s(t-1)}-\qs})-(1-h^{q^{s(t-1)}-\qs})M(x_1)^\qs\right)a=0.
\end{cases}
\end{equation}
and since $a$ is a nonzero solution then
\begin{eqnarray}\label{determinant-incomplete+}
 mx_1^\qs(1+h^{q^{st}-q^{2s}})\left(x_2^{q^{st}}(1+h^{q^{s(t-1)}-\qs})-(1-h^{q^{s(t-1)}-\qs})M(x_1)^\qs\right)=\nonumber\\=
 -h^{q^{st}-q^{2s}}L_m(x_2)^\qs\left(M(x_1)+(1-h^{q^{s(t-1)}-\qs})L_m(x_2)\right).
\end{eqnarray}

Since $L_m(x_2) \neq 0 \neq M(x_1)$, by \eqref{eq:case3.1_star+}, we have
\begin{equation*}
\begin{aligned}
M(x_1)^\qs &\left(mx_1^\qs(1+h^{q^{st}-q^{2s}})a^\qs-M(x_1)a\right)\\
&=L_m(x_2)\left(h^{q^{st}-q^{2s}}L_m(x_2)^\qs a^\qs+x_2^{q^{st}}(1+h^{q^{s(t-1)}-\qs})a\right)
\end{aligned}
\end{equation*}
and hence 
\begin{align}\label{aqs}
\nonumber \left(mx_1^\qs M(x_1)^\qs(1+h^{q^{st}-q^{2s}}) - h^{q^{st}-q^{2s}}L_m(x_2)L_m(x_2)^\qs\right)a^\qs=\\
=\left(M(x_1)M(x_1)^\qs+x_2^{q^{st}}(1+h^{q^{s(t-1)}-\qs})L_m(x_2)\right)a. 
\end{align}

Next we want to show that the coefficient of $a^\qs$ in \eqref{aqs} cannot be $0$. By way of contradiction, suppose that
\begin{equation}\label{D2+}
	mx_1^\qs\left(1+h^{q^{st}-q^{2s}}\right)M(x_1)^\qs=h^{q^{st}-q^{2s}}L_m(x_2)L_m(x_2)^{q^s}.
\end{equation}

Then, also the coefficient of $a$ in \eqref{aqs} must be $0$, and similarly to  \eqref{eq:m(q+1)}, we get 

\begin{equation}\label{eq:m(q+1)+}
\begin{split}
    m&=\frac{M(x_1)^{q^s+1}}{(x_2^\qs)^{q^{s(t-1)}+1}(1+h^{1-q^{s(t+2)}})^{q^{s(t-1)}+1}}=\\
    &=\frac{M(x_1)^{q^s+1}}{-x_2^\qs h^{q^{st}-q^{s(t+1)}}x_2(1+h^{q^{s(t-1)}-q^{s}})(1+h^{1-q^{s(t+2)}})}=\\
    &=\frac{M(x_1)^{q^s+1}}{x_2^{\qs+1} h^{q^s-1}(1+h^{q^{s(t-1)}-q^{s}})(1+h^{-q^{s(t-1)}+q^{s}})^\qs}=\\
    &=\frac{M(x_1)^{q^s+1}}{x_2^{\qs+1}(1+h^{q^{s(t-1)}-q^{s}})^{1+q^s} h^{q^s-1}(h^{-q^{st}+q^{2s}})}=\\
    &=\frac{M(x_1)^{q^s+1}}{x_2^{\qs+1}(1+h^{q^{s(t-1)}-q^{s}})^{1+q^s}(h^\qs)^{1+\qs}}=\\
    &=\left(\frac{M(x_1)}{x_2(h^{q^s}+h^{q^{s(t-1)}})}\right)^{q^s+1}.
\end{split}
\end{equation}

By Proposition \ref{prop-W}, $m \in \mathscr{P}_s^+$, a contradiction. 
Finally, the result follows  as $\textit{Case 3.1}$ of Theorem \ref{t:main}.

\noindent- $\textit{Case 3.2}$. Analogous to the $\textit{Case 3.2}$ in Theorem \ref{t:main}.

\end{proof}

\section{Equivalence issue}\label{sec:equiv}

In this section, we will show that the family of quadrinomials in Theorem \ref{main} is not equivalent to the three known families of  scattered polynomials of $\F_{q^n}$ existing for infinitely many $n$ and $q$. 


First, we prove that the family of quadrinomials defined in Theorem~\ref{main} is not equivalent to the families of pseudoregulus-type and Lunardon–Polverino LP–type polynomials.  
We will use some techniques from the theory of linear sets. More precisely, we will show that the linear sets associated with pseudoregulus-type and LP-type polynomials are not equivalent to those associated with a polynomial $\psi_{m,h,s}$.
Consequently, $\psi_{m,h,s}$ is $\GaL$-equivalent neither to $f_{1,s}$ nor $f_{2,s}$.\\
To this end, we recall the following notions. 
In \cite[Theorem 2]{Lunardon_Polverino}, it has been shown that every linear set of rank $u$ of $\Lambda=\PG(r-1,q^n)$ spanning it over $\F_{q^n}$, is either a canonical subgeometry of $\Lambda$ or it can be obtained as projection of a canonical subgeometry $\Sigma \cong \PG(u-1,q)$ of $\Sigma^* =\PG(u-1,q^n) \supset \Lambda$ from a suitable $(u-r-1)$-dimensional subspace $\Gamma$ of $\Sigma^*$ onto $\Lambda$ such that $\Gamma \cap \Sigma = \emptyset = \Gamma \cap \Lambda$. The subspaces $\Gamma$ and $\Lambda$ are also called the {\it vertex} and {\it axis} of the projection, respectively. We denote such a projection by the symbol $\mathrm{p}_{\Gamma,\Lambda}(\Sigma)$.

Let $\sigma$ be a collineation of $\PG(n-1,q^n)$ fixing pointwise  a canonical subgeometry $\Sigma$ of $\PG(n-1,q^n)$. Let $\Gamma$ be a non-empty $k$-dimensional subspace of $\PG(n-1,q^n)$. In \cite{Zanella-Zullo}, the authors define the \textit{intersection number of $\Gamma$ with respect to $\sigma$}, denoted by $\mathrm{intn}_{\sigma}(\Gamma)$, as the least positive  integer $\gamma$ satisfying 
\begin{equation*}
    \dim(\Gamma \cap \Gamma^\sigma \cap \ldots \cap \Gamma^{\sigma^\gamma}) > k -2 \gamma.
\end{equation*}
It is easy to see that $\mathrm{intn}_{\sigma}(\Gamma)$ is invariant under the action of the automorphism group $\mathrm{Aut}(\Sigma)=\{ \varphi \in \mathrm{P\Gamma L}(n,q^n) : \varphi \sigma = \sigma \varphi \}$. In \cite[Theorem 2.3]{Csajbok_Zanella1} and \cite[Theorem 3.2]{Zanella-Zullo}, the authors give a characterization of the families of scattered  $\F_q$-linear sets of pseudoregulus type and LP-type by means of the intersection number of the vertex of the projection, see also \cite{equivalences}. More precisely, let $\Gamma$ be an $(n-3)$-dimensional subspace and let $\Lambda$ be a line of $\PG(n-1,q^n)$ such that $\Gamma \cap \Lambda=\emptyset$, then we have:
\begin{itemize}
\item $L=\mathrm{p}_{\Gamma, \Lambda}(\Sigma)$ is a scattered $\F_q$-linear set of pseudoregulus type if, and only if, there exists $\sigma$ a collineation of order $n$ fixing pointwise $\Sigma$  such that $\mathrm{intn}_{\sigma}(\Gamma)=1$; 
\item $L=\mathrm{p}_{\Gamma, \Lambda}(\Sigma)$ is a scattered $\F_q$-linear set of LP-type if, and only if,
\begin{itemize}
\item[a)] there exists $\sigma$ a collineation of order $n$ fixing pointwise $\Sigma$ such that $\mathrm{intn}_{\sigma}(\Gamma)=2$;
\item[b)] there exist a unique point $P \in \PG(n - 1,q^n)$ and some point $Q$ such that $$\Gamma = \langle P,P^{\sigma}, \ldots, P^{\sigma^{n-4}},Q \rangle;$$
\item[c)] the line $\langle P^{\sigma^{n-1}}, P^{\sigma^{n-3}} \rangle$ meets $\Gamma$.
\end{itemize}
\end{itemize}

The following proposition shows that the family of quadrinomials defined as in Theorem \ref{main} is not equivalent to the families of pseudoregulus type and LP-type.

\begin{prop}
   Let $\Sigma$ be a canonical subgeometry of $\PG(2t-1,q^{2t})$, $t \geq 5$, and  $\sigma$ denote a collineation of order $2t$ of $\PG(2t-1,q^{2t})$ fixing pointwise $\Sigma$.  Let  $\mathrm{p}_{\Gamma, \Lambda}(\Sigma)$ be a linear set of $\PG(2t-1,q^{2t})$ equivalent to $L_{\psi_{m,h,s}}$, with $\gcd(s,2t)=1$. Then $\mathrm{intn}_{\sigma}(\Gamma)\geq 3$.
\end{prop}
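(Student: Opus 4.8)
The strategy is to realize the subspace $U_{\psi_{m,h,s}}$, viewed inside $\F_{q^{2t}}\times\F_{q^{2t}}$, as a projection of a canonical subgeometry $\Sigma\cong\PG(2t-1,q)$ of $\Sigma^*=\PG(2t-1,q^{2t})$ from an explicit $(2t-3)$-dimensional vertex $\Gamma$ onto a line $\Lambda$, exactly as in the Lunardon--Polverino construction recalled above, and then to bound $\mathrm{intn}_\sigma(\Gamma)$ from below by a direct computation with the defining equations of $\Gamma$. Concretely, I would pass to the standard coordinatization in which $\Sigma^*=\PG(\F_{q^{2t}},\F_{q^{2t}})$ with $\sigma$ the collineation induced by $x\mapsto x^{q^s}$ (which has order $2t$ since $\gcd(s,2t)=1$) fixing $\Sigma$ pointwise; then the linear set $L_{\psi_{m,h,s}}$ is the projection of $\Sigma$ from the vertex
\[
\Gamma=\bigl\{\langle(x_0,\dots,x_{2t-1})\rangle : x_1-\bigl(m x_1^{?}+\cdots\bigr)=0,\ \ldots\bigr\},
\]
i.e. $\Gamma$ is cut out by the two linear equations coming from $U=\{(x,\psi_{m,h,s}(x))\}$ written in the $\sigma$-eigenbasis; the precise equations are obtained by reading off the four exponents $q^s,\,q^{s(t+1)},\,q^{s(t-1)},\,q^{s(2t-1)}$ appearing in $\psi_{m,h,s}$, so that $\Gamma$ meets only the coordinate positions $\{1,\,t+1,\,t-1,\,2t-1\}$ (indices mod $2t$). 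This is the setup used in \cite{Zanella-Zullo, Bartoli_Zanella_Zullo} and in the papers on $\psi_{m,h,s}$.

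Next I would compute $\dim(\Gamma\cap\Gamma^\sigma\cap\cdots\cap\Gamma^{\sigma^\gamma})$ for $\gamma=1$ and $\gamma=2$ and show in both cases that the inequality $\dim(\cap_{i=0}^{\gamma}\Gamma^{\sigma^i})>(2t-3)-2\gamma$ \emph{fails}, which by definition forces $\mathrm{intn}_\sigma(\Gamma)\geq 3$. Since $\sigma$ acts on the eigenbasis by a cyclic shift of the $2t$ coordinates, $\Gamma^{\sigma^i}$ is cut out by the two linear forms of $\Gamma$ with all indices shifted by $i$. The four ``support'' indices of $\Gamma$ are $\{1,t-1,t+1,2t-1\}$; intersecting $\Gamma$ with $\Gamma^\sigma$ adds two more independent linear conditions provided the shifted support $\{2,t,t+2,0\}$ is not contained in the span of the original conditions, and likewise for $\Gamma\cap\Gamma^\sigma\cap\Gamma^{\sigma^2}$. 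Here the hypothesis $t\geq 5$ is exactly what guarantees that the supports $\{1,t-1,t+1,2t-1\}$, $\{2,t,t+2,0\}$, $\{3,t+1,t+3,1\}$ are ``generic enough'' — pairwise they overlap in at most one index — so that at each of the first two steps the dimension drops by the full $2$, giving $\dim(\Gamma\cap\Gamma^\sigma)=2t-5$ and $\dim(\Gamma\cap\Gamma^\sigma\cap\Gamma^{\sigma^2})=2t-7$, neither of which exceeds the threshold $2t-3-2\gamma$. Hence $\mathrm{intn}_\sigma(\Gamma)\geq 3$.

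The main obstacle is the bookkeeping at $\gamma=2$: one must check that the six linear forms defining $\Gamma\cap\Gamma^\sigma\cap\Gamma^{\sigma^2}$ are genuinely independent, i.e. that no nontrivial $\F_{q^{2t}}$-combination of the shifted pairs of forms vanishes. This is where the precise coefficients of $\psi_{m,h,s}$ (the powers of $h$ and the scalar $m$) enter, and where one uses that $m\neq 0$, $h\neq 0$, and the conditions from Proposition \ref{p:hcondition} (such as $h^{q^{2s}+1}\neq\pm1$ and $h^{q^{s(t-2)}}\neq -h$) to rule out accidental degeneracies; the case $t=3,4$ is excluded precisely because for small $t$ the supports collide (e.g. $t-1$ and $t+1$ become adjacent to $0,2,\dots$ in a way that creates extra coincidences), which is why the statement assumes $t\geq 5$. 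Once independence is established the dimension count is immediate and the conclusion $\mathrm{intn}_\sigma(\Gamma)\geq 3$ follows, so that $L_{\psi_{m,h,s}}$ is neither of pseudoregulus type nor of LP-type, and therefore $\psi_{m,h,s}$ is $\GaL$-equivalent to neither $f_{1,s}$ nor $f_{2,s}$.
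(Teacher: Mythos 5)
Your plan is essentially the paper's own proof: it realizes $\Sigma$ as $\{\langle(x,x^{q^s},\ldots,x^{q^{s(2t-1)}})\rangle_{\F_{q^{2t}}}\}$, takes the vertex $\Gamma$ cut out by $X_0=0$ together with the quadrinomial form supported on the indices $s,s(t-1),s(t+1),s(2t-1)$, and uses the shift action of $\sigma$ to get $\dim(\Gamma\cap\Gamma^{\sigma})=2t-5$ and $\dim(\Gamma\cap\Gamma^{\sigma}\cap\Gamma^{\sigma^2})=2t-7$, which by the definition of intersection number yields $\mathrm{intn}_{\sigma}(\Gamma)\geq 3$. The only small correction is that the independence check at $\gamma=2$ does not need the conditions of Proposition \ref{p:hcondition}: for $t\geq 5$ each of the three (shifted) quadrinomial forms contains a variable (e.g.\ $X_{s(t-1)}$, $X_{st}$, $X_{s(t+3)}$) occurring in none of the other five forms, so $m\neq 0$ and $h\neq 0$ already give the full rank drop.
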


\begin{proof}
Consider $$\Sigma=\left \{ \langle (x,x^{q^s},\ldots,x^{q^{s(2t-1)}}) \rangle _{\F_{q^{2t}}} : x \in \F_{q^{2t}}^* \right \}$$ a canonical subgeometry of $\PG(2t-1,q^{2t})$. This is fixed by the collineation 
$$\sigma : \langle (x_0,x_s,\ldots,x_{s(2t-1)}) \rangle_{ \F_{q^{2t}}} \in \PG(2t-1,q^{2t}) \longrightarrow \langle(x_{s(2t-1)}^{q^{s}},x_{0}^{q^{s}},x_s^{q^s}, \ldots, x_{s(2t-2)}^{q^{s}}) \rangle _{\F_{q^{2t}}} \in \PG(2t-1,q^{2t}).$$ It is straightforward to see that the linear set $L_{\psi_{m,h,s}}$ is equivalent to $\mathrm{p}_{\Gamma, \Lambda}(\Sigma)$ where

\begin{equation*}
    \Gamma : \begin{cases} 
        X_{0}  = 0\\
      m(X_s-h^{1-q^{s(t+1)}}X_{s(t+1)})+X_{s(t-1)}+h^{1-q^{s(2t-1)}}X_{s(2t-1)}=0,
       \end{cases}
\end{equation*}
and $\Lambda$ is defined by the equations
\[
X_{sj}=0
\quad\text{for every positive integer } j \text{ with } 
j\not\equiv 0,t-1 \pmod{2t}.
\] 
Then,
    $$\Gamma^{\sigma} : \begin{cases} 
        X_{s}  = 0\\
      m^{q^{s}}(X_{2s}-h^{q^s-q^{s(t+2)}}X_{s(t+2)})+X_{st}+h^{q^s-1}X_0=0,
       \end{cases}
    $$
    and
     $$
    \Gamma^{\sigma^2} : \begin{cases} 
        X_{{2s}}  = 0\\
      m^{q^{2s}}(X_{3s}-h^{q^{2s}-q^{s(t+3)}}X_{s(t+3)})+X_{s(t+1)}+h^{q^{2s}-q^{s}}X_{s}=0.
       \end{cases}
    $$
    Therefore, it follows $\dim( \Gamma \cap  \Gamma^{\sigma})=2t-5$ and $\dim( \Gamma \cap  \Gamma^{\sigma} \cap \Gamma^{\sigma^2})=2t-7$. Hence. $\mathrm{intn}_{\sigma}(\Gamma) \geq 3$ and this concludes the proof.
\end{proof}

Next, we deal with possible equivalences between two polynomials $\psi_{m,h,s}$ and $\psi_{\mu,k,\ell}$. 

\begin{remark}\label{remequiv}
\textnormal{    Let $s,\ell$ be positive integers with $\gcd(s,2t)=1=\gcd(\ell,2t)$, $m,\mu\in\F_{q^t}$, $h,k\in\F_{q^{2t}}$ such that they respect the assumptions of Theorem \ref{main}, 
    then $\psi_{m,h,s}\sim_{\GaL} \psi_{\mu,k,\ell}$ if and only if there exist $\tau\in\mathrm{Aut}(\F_{q^{2t}})$ such that $\psi_{m,h,s}\sim_{\mathrm{GL}} \psi_{\mu^\tau,k^\tau,\ell}$}
\end{remark}

Since the analysis is considerably more involved, and to make the section more accessible, we will postpone the proof of the following theorem to the Appendix.

\begin{theorem}\label{equivalence}
Let $t \geq 5$, $s,\ell$ be positive integers with $\gcd(s,2t)=1=\gcd(\ell,2t)$, $m,\mu\in\F_{q^t}$, $h,k\in\F_{q^{2t}}$ verifying the assumptions of Theorem \ref{main} and let $\psi_{m,h,s},\psi_{\mu,k,\ell}$ be defined as above. Then we have the following results:
\begin{itemize}
    \item[a)] if $  \ell \not\equiv \pm s \pmod{2t}$ and $  \ell \not\equiv t\pm s \pmod{2t}$, then $\psi_{m,h,s}\not\sim_{\mathrm{GL}} \psi_{\mu,k,\ell}$;
    \item[b)] if $  \ell \equiv - s \pmod{2t}$ and $\psi_{m,h,s}\sim_{\mathrm{GL}} \psi_{\mu,k,\ell}$, then we have that $kh\in\mathbb{F}_{q^{\gcd(t-2,2t)}}$ and we have one of the two following properties on $m,\mu$: \begin{itemize}
        \item there exist $z\in\F_{q^{2t}}$ such that $z^{q^{st}}=(kh)^{q^{3s}-1}z$ and $m\mu^\qs=z^{q^{s(t-2)}-1}$;
        \item there exist $z\in\F_{q^{2t}}$ such that $z^{q^{st}}=-(kh)^{q^{3s}-1}z$ and $m\mu=-z^{q^{s(t-2)}-1}$.
    \end{itemize}
    \item[c)] if $  \ell \equiv  s \pmod{2t}$ and $\psi_{m,h,s}\sim_{\mathrm{GL}} \psi_{\mu,k,\ell}$, then we have that $h/k\in\mathbb{F}_{q^{\gcd(s(t-2),2t)}}$ and we have one of the two following properties on $m,\mu$: \begin{itemize}
        \item there exist $z\in\F_{q^{2t}}$ such that $z^{q^{st}}=-(h/k)^{q^{3s}-1}z$ and $m\mu^{-q^{s(t-1)}}=-z^{q^{s(t-2)}-1}$;
        \item there exist $z\in\F_{q^{2t}}$ such that $z^{q^{st}}=(h/k)^{q^{3s}-1}z$ and $m/\mu=z^{q^{s(t-2)}-1}$.
    \end{itemize}
    \item[d)] if $  \ell \equiv t- s \pmod{2t}$ and $\psi_{m,h,s}\sim_{\mathrm{GL}} \psi_{\mu,k,\ell}$, then we have that $h/k\in\mathbb{F}_{q^{\gcd(s(t-2),2t)}}$ and we have one of the two following properties on $m,\mu$: \begin{itemize}
        \item there exist $z\in\F_{q^{2t}}$ such that $z^{q^{st}}=(h/k)^{q^{3s}-1}z$ and $m\mu^\qs=-z^{q^{s(t-2)}-1}$;
        \item there exist $z\in\F_{q^{2t}}$ such that $z^{q^{st}}=-(h/k)^{q^{3s}-1}z$ and $m\mu=z^{q^{s(t-2)}-1}$.
\end{itemize}
\item[e)] if $  \ell \equiv t+ s \pmod{2t}$ and $\psi_{m,h,s}\sim_{\mathrm{GL}} \psi_{\mu,k,\ell}$, then we have that $hk\in\mathbb{F}_{q^{\gcd(s(t-2),2t)}}$ and we have one of the two following properties on $m,\mu$: \begin{itemize}
        \item there exist $z\in\F_{q^{2t}}$ such that $z^{q^{st}}=-(hk)^{q^{3s}-1}z$ and $m\mu^{-q^{s(t-1)}}=z^{q^{s(t-2)}-1}$;
        \item there exist $z\in\F_{q^{2t}}$ such that $z^{q^{st}}=(hk)^{q^{3s}-1}z$ and $m/\mu=-z^{q^{s(t-2)}-1}$.
\end{itemize}
\end{itemize}
\end{theorem}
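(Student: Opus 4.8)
The plan is to translate the $\mathrm{GL}$-equivalence into a functional identity among linearized polynomials and then compare monomial coefficients. By Remark~\ref{remequiv} it suffices to treat $\mathrm{GL}$-equivalence, so suppose a matrix $\begin{pmatrix}\alpha & \beta\\ \gamma & \delta\end{pmatrix}\in\mathrm{GL}(2,q^{2t})$ carries $U_{\psi_{m,h,s}}$ to $U_{\psi_{\mu,k,\ell}}$; equivalently,
\[
\gamma X+\delta\,\psi_{m,h,s}(X)=\psi_{\mu,k,\ell}\bigl(\alpha X+\beta\,\psi_{m,h,s}(X)\bigr)
\]
as an identity in $\tilde{\mathcal L}_{2t,q,1}$, obtained by rewriting every $q^s$- or $q^\ell$-linearized polynomial as a $q$-linearized one. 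Since $\gcd(s,2t)=\gcd(\ell,2t)=1$ forces $s$ and $\ell$ to be odd, we have $st\equiv t$, $s(t\pm1)\equiv t\pm s$ and $s(2t-1)\equiv-s\pmod{2t}$, so the four monomials of $\psi_{m,h,s}$ lie at the $q$-exponents $\{s,-s,t+s,t-s\}$ and those of $\psi_{\mu,k,\ell}$ at $\{\ell,-\ell,t+\ell,t-\ell\}$, all with nonzero coefficients because $m,\mu,h,k$ are nonzero under the hypotheses of Theorem~\ref{main}.

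I would first dispose of the case $\beta=0$: then $\alpha,\delta\neq0$ and the identity reads $\gamma X+\delta\,\psi_{m,h,s}(X)=\psi_{\mu,k,\ell}(\alpha X)$, whose right-hand side is supported on $\{\ell,-\ell,t\pm\ell\}$; matching the constant term gives $\gamma=0$, and matching the remaining support gives the set equality $\{\pm s,\pm(t+s)\}=\{\pm\ell,\pm(t+\ell)\}$ modulo $2t$, which (using $t\geq5$ to rule out small-modulus degeneracies) forces $\ell\equiv\pm s$ or $\ell\equiv t\pm s\pmod{2t}$. When $\beta\neq0$, expanding $\psi_{\mu,k,\ell}(\alpha X+\beta\,\psi_{m,h,s}(X))$ yields an ``$\alpha$-part'' supported on $\{\ell,-\ell,t\pm\ell\}$ together with a ``$\beta$-part'' supported on the sumset $\{s,-s,t\pm s\}+\{\ell,-\ell,t\pm\ell\}$. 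Since the left-hand side is supported on the five exponents $\{0,s,-s,t\pm s\}$, almost all monomials of the $\beta$-part must cancel, either among themselves or against the $\alpha$-part; a bookkeeping of residues modulo $2t$ — which is where the hypothesis $t\geq5$ really enters, since for $t=3,4$ the exponent set $\{\pm s,\pm(t+s)\}$ degenerates — shows that unless $\ell$ is one of the four residues above, the $\beta$-part leaves a nonzero monomial at an exponent outside $\{0,s,-s,t\pm s\}\cup\{\ell,-\ell,t\pm\ell\}$, a contradiction. This proves item~(a).

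For items (b)--(e), I would specialise $\ell$ to $-s,\,s,\,t-s,\,t+s$ in turn and compare the coefficients of the individual monomials $X^{q^j}$ in the identity. In each case the equations coming from the extreme exponents (such as $X^{q^{2s}}$, $X^{q^{st}}$, $X^{q^{s(t-2)}}$, $X^{q^{s(t+2)}}$) first pin down $\alpha$ and $\delta$: elementary congruence considerations modulo $2t$ together with $q$ odd (so that $\F_q\cap\ker\Tr_{q^{2t}/q^t}=\{0\}$) force $\alpha=\delta=0$, i.e.\ the equivalence matrix is anti-diagonal. Two of the surviving equations then each express $\beta^{q^{st}-1}$ as a monomial in $h$ and $k$, and equating the two expressions collapses to $(h/k)^{q^{s(t-2)}-1}=1$ — with $hk$, resp.\ $kh$, in place of $h/k$ in items (e) and (b) — whence $h/k$ (resp.\ $hk$, $kh$) lies in $\F_{q^{\gcd(s(t-2),2t)}}$, as claimed. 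Finally, comparing the coefficients of $X$ and of $X^{q^s}$ expresses $m$ and $\mu$ through $\beta$, and eliminating $\beta$ yields the stated relation between $m$ and $\mu$; the two alternatives listed in each item correspond to the two branches of the root that must be extracted in odd characteristic — precisely the mechanism exploited in Lemma~\ref{lem:P} — with the auxiliary element $z$ chosen so that $z^{q^{st}}=\pm(h/k)^{q^{3s}-1}z$ (resp.\ with $hk$ or $kh$).

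The main obstacle is the exponent analysis in item~(a): for every residue $\ell\not\equiv\pm s,\,t\pm s\pmod{2t}$ one must check that the (up to sixteen) monomials of the $\beta$-part cannot conspire to collapse onto the five-element support of the left-hand side, and that the necessary case distinctions are all valid exactly when $t\geq5$. Once (a) is established, the computations in (b)--(e) are lengthy but essentially mechanical — of the same type as those appearing in Remark~\ref{hfqt} — the only delicate point being to track carefully the sign ambiguities that produce the dichotomies in the statement.
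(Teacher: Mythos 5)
Your overall setup (reducing to the polynomial identity $\psi_{m,h,s}(\alpha X+\beta\,\psi_{\mu,k,\ell}(X))=\gamma X+\delta\,\psi_{\mu,k,\ell}(X)$ and comparing coefficients) is the same as the paper's, but your argument for item (a) in the case $\beta\neq0$ is only asserted, and the mechanism you invoke is not the one that actually closes the case. Since $s(t+1)\equiv t+s$, $s(t-1)\equiv t-s$, $s(2t-1)\equiv -s \pmod{2t}$ (and similarly for $\ell$), the sixteen exponents of the $\beta$-part collapse onto the sumset $\{\pm s,\,t\pm s\}+\{\pm\ell,\,t\pm\ell\}$, which consists of at most eight residues, each carrying \emph{two} monomials; these pairs can cancel for suitable $\beta$ (precisely the cancellations exploited later in cases (b)--(e)), so your claim that ``the $\beta$-part leaves a nonzero monomial at an exponent outside the support'' is not guaranteed and would itself require the bookkeeping you postpone. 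The paper's contradiction in (a) comes from a different place: the coefficient of $X$ forces $\gamma=0$; then one checks that the only possible coincidences of the exponent $s$ (resp.\ $\ell$) with the remaining exponents are $2s\mp\ell\equiv t$ (resp.\ $2\ell\mp s\equiv t$) modulo $2t$, that these are pairwise incompatible for $t\geq 5$, and hence either the coefficient $m\alpha^{q^s}$ at exponent $s$ or the coefficient $-\delta\mu$ at exponent $\ell$ stands alone and must vanish; together with $\gamma=0$ this makes the matrix singular, independently of any internal cancellation in the $\beta$-block.

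For items (b)--(e) there is a genuine error: you claim the extreme-exponent equations force $\alpha=\delta=0$, i.e.\ an anti-diagonal matrix. This is false --- the identity matrix realizes $\psi_{m,h,s}\sim_{\mathrm{GL}}\psi_{m,h,s}$, a \emph{diagonal} equivalence falling under item (c), so $\alpha=\delta=0$ cannot be forced, and the dichotomy in each item is not produced by ``two branches of a root extraction''. In the paper the two alternatives correspond to the case split $\beta\neq0$ versus $\beta=0$: when $\beta\neq0$, the conditions on $h,k$ and on $m,\mu$ are read off from the coefficients of $X^{q^{2s}},X^{q^{s(t-2)}},X^{q^{s(t+2)}},X^{q^{s(2t-2)}},X^{q^{st}}$, which involve only $\beta$ (no determination of $\alpha,\delta$ is needed or made); when $\beta=0$, one gets $\gamma=0$, $\alpha\delta\neq0$, and the coefficients of $X^{q^{s}},X^{q^{s(t\pm1)}},X^{q^{s(2t-1)}}$ yield the second bullet of each item. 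Your scheme of ``comparing the coefficients of $X$ and $X^{q^s}$ and eliminating $\beta$'' would mix $\alpha,\gamma,\delta$ into those equations, and by discarding the diagonal case it would miss one of the two listed alternatives altogether. So the proposal needs the residue bookkeeping in (a) actually carried out along the lines above, and the case distinction on $\beta$ restored in (b)--(e).
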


The following corollary of Theorem \ref{equivalence} ensures that the family of polynomials in Theorem \ref{main} contains new examples of scattered polynomials other than those already investigated in \cite{longobardi_marino_trombetti_zhou, SmaZaZu}. 
\begin{cor}
    Let $q$ be an odd prime power, $t\geq5$ a positive integer and $\gcd(s,2t)=1$. If $t$ is odd and $q\geq7$ or $t$ is even and $q\geq5$ there exists a pair $(m,h)$ (verifying the assumptions of Theorem \ref{main}) such that for any  positive integer $\ell$ with $\gcd(\ell,2t)=1$, for any $k\in\F_{q^{2t}}$ with $\mathrm{N}_{q^{2t}/q^t}(k)=-1$, and for any $\mu\in\F_{q^t}$ we have $$\psi_{1,k,\ell}\not\sim_{\GaL}\psi_{m,h,s}\not\sim_{\GaL}\psi_{\mu,1,\ell}.$$
\end{cor}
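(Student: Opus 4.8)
The plan is to translate $\GaL$-equivalence into the explicit conditions of Theorem~\ref{equivalence} and then finish with a counting argument. First, by Remark~\ref{remequiv} — and since $1^\tau=1$ for every $\tau\in\mathrm{Aut}(\F_{q^{2t}})$, while such a $\tau$ permutes both $\F_{q^t}$ and the set of norm-$(-1)$ elements of $\F_{q^{2t}}$ — it suffices to prove that $\psi_{m,h,s}$ is not $\mathrm{GL}$-equivalent to any $\psi_{1,k,\ell}$ with $\mathrm{N}_{q^{2t}/q^t}(k)=-1$, nor to any $\psi_{\mu,1,\ell}$ with $\mu\in\F_{q^t}$. When $\psi_{\mu,1,\ell}$ fails to be scattered (equivalently, $\mu\in\mathscr{P}^+_s\cup\mathscr{P}^-_s$, by \cite{SmaZaZu}) inequivalence is automatic; otherwise both sides satisfy the hypotheses of Theorem~\ref{main}, so Theorem~\ref{equivalence} applies, and by part~(a) a $\mathrm{GL}$-equivalence can only occur if $\ell\equiv\pm s$ or $\ell\equiv t\pm s\pmod{2t}$. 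It then remains to rule out the four situations of parts~(b)--(e), recalling $\gcd(s(t-2),2t)=\gcd(t-2,2t)=\gcd(t-2,4)$, which divides $4$.

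I would first dispose of the family $\psi_{\mu,1,\ell}$: putting $k=1$ in (b)--(e), each of $kh$, $h/k$, $hk$ equals $h$, so the field condition becomes $h\in\F_{q^{\gcd(t-2,4)}}\subseteq\F_{q^4}$. Hence it is enough to pick $h\notin\F_{q^4}$ (in particular $h\notin\F_{q^t}$); since there are $q^t+1$ elements of each norm $\pm1$, at most $q^4$ of them in $\F_{q^4}$ and at most four in $\F_{q^t}$, and $t\ge5$, such $h$ exist, and then $\psi_{m,h,s}\not\sim_{\mathrm{GL}}\psi_{\mu,1,\ell}$ for every $\mu$ and $\ell$.

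The core is to rule out $\psi_{1,k,\ell}$ with $\mathrm{N}_{q^{2t}/q^t}(k)=-1$. Setting $\mu=1$ in (b)--(e), the field condition forces one of $kh$, $h/k$, $hk$ into $\F_{q^{\gcd(t-2,4)}}$ with $\mathrm{N}_{q^{2t}/q^t}$-value $-\mathrm{N}_{q^{2t}/q^t}(h)$. If $t$ is odd this subfield is $\F_q$, on which the norm is the square, so $-\mathrm{N}_{q^{2t}/q^t}(h)$ must be a square of $\F_q$; choosing, when $q\equiv3\pmod4$, an $h$ with $\mathrm{N}_{q^{2t}/q^t}(h)=1$ and $h^2\neq-1$ (admissible by Theorem~\ref{main}) makes this impossible, so no $\mathrm{GL}$-equivalence to any $\psi_{1,k,\ell}$ occurs and every $m\in\F_{q^t}\setminus(\mathscr{P}^+_s\cup\mathscr{P}^-_s)$ works — such $m$ exist because $|\mathscr{P}^+_s\cup\mathscr{P}^-_s|=\frac{q^t-1}{\gcd(q^s+1,q^t-1)}+\frac{q^t-1}{q-1}+1<q^t$ exactly when $q\ge7$. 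In the remaining cases ($t$ even, or $t$ odd with $q\equiv1\pmod4$) the field condition may be satisfiable, but then the corresponding power $(kh)^{q^{3s}-1}$ (resp. $(h/k)^{q^{3s}-1}$, $(hk)^{q^{3s}-1}$) still lies in $\F_{q^{\gcd(t-2,4)}}$, and analysing the two alternatives of each of (b)--(e) shows that the relations $z^{q^{st}}=\pm(\,\cdot\,)^{q^{3s}-1}z$ and $m=\pm z^{q^{s(t-2)}-1}$ confine $m$ to a fixed union $\mathcal B$ of boundedly many cosets of the subgroup $\{\alpha^{q^s-1}:\alpha\in\F_{q^t}^*\}$ (if $t$ is odd) or $\{\alpha^{q^{2s}-1}:\alpha\in\F_{q^t}^*\}$ (if $t$ is even) of $\F_{q^t}^*$, the cosets depending only on $\ell\bmod2t$ and on the sign choices, not on $k$. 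One then takes $m\in\F_{q^t}\setminus(\mathscr{P}^+_s\cup\mathscr{P}^-_s\cup\mathcal B)$ together with a valid $h\notin\F_{q^4}\cup\F_{q^t}$: by Theorem~\ref{main}, $\psi_{m,h,s}$ is scattered, and by the above it is $\GaL$-inequivalent to all members of the two older families.

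The main obstacle is the last step: controlling $\mathcal B$ tightly enough to run the count. One must check that $\mathcal B$ is a union of only a few cosets of the relevant index-$(q^s-1)$ (resp. index-$(q^{2s}-1)$) subgroup, and then that $|\mathscr{P}^+_s\cup\mathscr{P}^-_s\cup\mathcal B|<q^t$ precisely under $q\ge7$ (for $t$ odd) and $q\ge5$ (for $t$ even). This relies on the explicit sizes $|\mathscr{P}^-_s\setminus\{0\}|=\frac{q^t-1}{q-1}$ and $|\mathscr{P}^+_s\setminus\{0\}|=\frac{q^t-1}{\gcd(q^s+1,q^t-1)}$ — equal to $\frac{q^t-1}{2}$ for $t$ odd and $\frac{q^t-1}{q+1}$ for $t$ even, the larger value in the odd case being exactly what forces $q\ge7$ rather than $q\ge5$ — which follow from Lemmas~\ref{power} and~\ref{lem:P}, and on the observation that, for $t$ odd, $\mathscr{P}^-_s\setminus\{0\}$ is precisely the fibre of $\mathrm{N}_{q^t/q}$ over $-1$ and $\mathscr{P}^+_s\setminus\{0\}$ a fixed square class, so that the cosets making up $\mathcal B$ either lie inside $\mathscr{P}^+_s\cup\mathscr{P}^-_s$ or are readily enumerated.
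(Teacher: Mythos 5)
Your skeleton follows the paper's: reduce to $\mathrm{GL}$-equivalence via Remark \ref{remequiv}, kill the family $\psi_{\mu,1,\ell}$ by choosing $h\notin\F_{q^{\gcd(t-2,2t)}}$, and attack $\psi_{1,k,\ell}$ through the constraints on $m$ in parts (b)--(e) of Theorem \ref{equivalence}. Your norm obstruction in the sub-case $t$ odd, $q\equiv3\pmod4$ (take $\mathrm{N}_{q^{2t}/q^t}(h)=1$, $h^2\neq-1$, so that $\xi=kh^{\pm1}\in\F_q$ would force $\xi^2=\mathrm{N}_{q^{2t}/q^t}(\xi)=-1$, impossible) is a legitimate variant of the paper's treatment of that sub-case, where instead $\mathrm{N}_{q^{2t}/q^t}(h)=-1$ and $m\in\mathscr{P}^+_s$ are used.

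The genuine gap is in the remaining cases ($t$ even; $t$ odd with $q\equiv1\pmod4$), which you do not actually close. You assert that the relations $m=\pm z^{q^{s(t-2)}-1}$, $z^{q^{st}}=\pm\xi^{q^{3s}-1}z$ confine $m$ to a union $\mathcal B$ of ``boundedly many'' cosets of an explicit subgroup of $\F_{q^t}^*$, with cosets independent of $k$, and you explicitly defer both the verification of this structure and the final count --- but that is precisely the heart of the proof. For $t$ odd the independence of $k$ does hold, since $\xi\in\F_q$ gives $\xi^{q^{3s}-1}=1$; for $t$ even, however, $\xi$ ranges over $\F_{q^{\gcd(t-2,4)}}\subseteq\F_{q^4}$, so $\xi^{q^{3s}-1}$ genuinely varies with $k$, no bound on the number of cosets is given, and without one the inequality $|\mathscr{P}^+_s\cup\mathscr{P}^-_s\cup\mathcal B|<q^t$ at the stated thresholds cannot be extracted. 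The paper closes this with a single observation you are missing: since $q-1$ divides $q^{s(t-2)}-1$ and $-1$ is itself a $(q-1)$-th power in $\F_{q^{2t}}^*$, every admissible value $\pm z^{q^{s(t-2)}-1}$ lies in $\mathscr{D}=\{x^{q-1}:x\in\F_{q^{2t}}^*\}\cap\F_{q^t}^*$, whose size is computed to be $2\frac{q^t-1}{q-1}$; it then suffices to pick $m\notin\mathscr{P}^+_s\cup\mathscr{P}^-_s\cup\mathscr{D}$ (or $m\in\mathscr{P}^+_s\setminus\mathscr{D}$ in case (ii)(a)), and the thresholds $q\geq5$ ($t$ even) and $q\geq7$ ($t$ odd) drop out of this one count, uniformly in $k$, $\ell$ and the sign choices. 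A further slip: $|\mathscr{P}^+_s\cup\mathscr{P}^-_s|<q^t$ holds for every odd $q\geq5$ (for $t$ odd it amounts to $\tfrac12+\tfrac1{q-1}<1$), not ``exactly when $q\geq7$''; the $q\geq7$ threshold in the odd case comes from the additional set one must avoid, not from $\mathscr{P}^+_s\cup\mathscr{P}^-_s$ alone.
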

\begin{proof}
    By Remark \ref{remequiv}, it is enough to prove that $$\psi_{1,k,\ell}\not\sim_{\mathrm{GL}}\psi_{m,h,s}\not\sim_{\mathrm{GL}}\psi_{\mu,1,\ell},$$
    for any $k\in\F_{q^{2t}}$ with $\mathrm{N}_{q^{2t}/q^t}(k)=-1$, and for any $\mu\in\F_{q^t}$.
    Note that to obtain $\psi_{m,h,s}\not\sim_{\mathrm{GL}}\psi_{\mu,1,\ell}$ we only need that $h\notin\mathbb{F}_{q^{\gcd(t-2,2t)}}$. 
By Theorem \ref{equivalence}    To have $\psi_{1,k,\ell}\not\sim_{G L}\psi_{m,h,s}$  we need that do not exist $k,z\in\F_{q^{2t}}$ with $\mathrm{N}_{q^{2t}/q^t}(k)=-1$ such that $m=\pm z^{q^{s(t-2)}-1}$ with $z^{q^{st}}=\pm(\xi)^{q^{3s}-1}z$, where $\xi=hk^{\pm1}\in\mathbb{F}_{q^{\gcd(t-2,2t)}}.$

Firstly, note that in every case we will find $h$ with the assumptions of Theorem \ref{t:main} or Theorem \ref{norma=1} such that $h\notin\mathbb{F}_{q^{\gcd(t-2,2t)}}$. Now, since the set of $(q-1)$-th power of an element in $\F_{q^{2t}}^*$ is closed by the multiplication of $-1$, it is sufficient to take  $m$ such that is not a $(q-1)$-th power of an element in $\F_{q^{2t}}^*$ .
Let us compute the cardinality of $\mathscr{D}=\{x^{q-1}\,:\,x \in \F_{q^{2t}}^*\}\cap\F_{q^t}^*$. Given $\alpha$ a generator of $\F_{q^{2t}}^*$ and consider the multiplicative subgroup of $\F_{q^{2t}}^*$,  $\langle\alpha^{q-1}\rangle$ and $\langle\alpha^{q^t+1}\rangle$.
Then,
$\langle\alpha^{q-1}\rangle\cap\langle\alpha^{q^t+1}\rangle=\langle\alpha^{\text{lcm}(q-1,q^t+1)}\rangle$ and its size is $$\frac{q^{2t}-1}{{\text{lcm}}(q-1,q^t+1)}=\frac{(q^t-1)\gcd(q-1,q^t+1)}{q-1}=2\frac{q^t-1}{q-1}.$$

If $t$ is even, we need $m\in\F_{q^t}^*$ such that is not in $\mathscr{P}^+_s\cup \mathscr{D}$. By \cite[Proposition 2.5]{SmaZaZu} and Lemma \ref{lem:P}, we have that $\vert \mathscr{P}^+_s \vert =\frac{q^t-1}{q-1}$, so when $q\geq5$ we obtain 
$$\vert\F_{q^t}^* \vert =q^t-1>2\frac{q^t-1}{q-1}+\frac{q^t-1}{q-1}\vert= \vert \mathscr{P}^+_s\cup \mathscr{D} \vert  \geq  \vert \mathscr{P}^+_s \cup \mathscr{P}^-_s. \vert $$
If $t$ is odd and $q\equiv3\pmod{4}$, we need $m\in\mathscr{P}^+_s$ such that is not in $\mathscr{D}$. By \cite[Proposition 2.5]{SmaZaZu} and Lemma \ref{lem:P}, we have that $\vert \mathscr{P}^+_s \vert =\frac{q^t-1}{2}$. So when $q\geq7$, we obtain $$\vert \mathscr{P}^+_s \vert =\frac{q^t-1}{2}>2\frac{q^t-1}{q-1}=\vert \mathscr{D} \vert .$$
In this case, we must choose $h$ with $\mathrm{N}_{q^{2t}/q^t}(h)=-1$.
Finally, if $t$ is odd and $q \equiv 1 \pmod 4$, we need again $m\in\F_{q^t}^*$ such that is not in $\mathscr{P}^+_s\cup \mathscr{D}$. Again, by \cite[Proposition 2.5]{SmaZaZu} and Lemma \ref{lem:P},  $\vert \mathscr{P}^+_s \vert =\frac{q^t-1}{2}$, Then, for $q\geq7$, we obtain $$\vert \F_{q^t}^* \vert =q^t-1>2\frac{q^t-1}{q-1}+\frac{q^t-1}{2}\geq\ \vert \mathscr{P}^+_s \cup \mathscr{D}  \vert \geq \vert \mathscr{P}^+_s \cup \mathscr{P}^-_s \vert .$$
Therefore, in every case, there exists such a suitable pair $(m,h) \in \F_{q^t} \times \F_{q^{2t}}$ and this completes the proof.  
\end{proof}

\section{The stabilizer of \texorpdfstring{$U_{\psi_{m,h,s}}$}{U_{psi}}}
\label{sec:stab}

The aim of this section is to compute the stabilizer in $\mathrm{GL}(2,q^n)$ of the subspace $U_{\psi_{m,h,s}}$. From now on, we will denote by $\mathcal{S}_{n,q,s}$ the set of all scattered polynomials $f \in \tilde{\mathcal{L}}_{n,q,s}$
such that $G_f^\circ$ is not isomorphic to $\mathbb{F}_q$.

\begin{definition}\label{standard-form}
\textnormal{For a scattered polynomial $F=\sum_{i=0}^{n-1}b_iX^{q^{si}}$, let 
\[\Delta_F=\{(i-j) \mod n\colon b_ib_j\neq0 \, \text{and} \, i \neq j \}\cup\{n\},\] and $r_F$ be the greatest common divisor
of $\Delta_F$.
If $r_F>1$ then $F$ is in \emph{standard form}.}
\end{definition}
A scattered polynomial $F$ in standard form has the following shape:
\begin{equation}\label{standardshape}
\sum^{n/r
-1}_{
j=0}
b_jX^{q^{jr+s}}
,
\end{equation}
where $r = r_F$ and $1 \leq  s < r$ with $\gcd(s,r)=1$, see \cite[Definition 2.4]{survey}.
Let $f$ be  a scattered polynomial in $\mathcal{S}_{n,q,s}$. 
Then, $f$ is $\GL$-equivalent to a polynomial $F$ in standard form, see \cite[Theorem 2.5]{survey}.

\begin{theorem}\label{eqRI}
 Let $\cC_F = \langle X, F \rangle_{\F_{q^n}}  \subseteq \tilde{\mathcal{L}}_{n,q,s}$ be a linear  MRD code with minimum distance $n-1$. The following statements are equivalent: 
 \begin{itemize}
     \item [$(i)$] $|G^\circ_ F| = q^r$, $r > 1$, and all elements of $G^\circ _F$ are diagonal.
    \item  [$(ii)$] the polynomial $F$ is in standard form. 
    \item [$(iii)$] the right idealizer $$I_R(\cC_F)=\{\alpha X : \alpha \in \F_{q^r}\}.$$ 
    \end{itemize}
    Moreover, if the above conditions $(i)$, $(ii)$ and $(iii)$ hold, then $r = r_F$ and $G^\circ_F =
   \left \{ \begin{pmatrix}
         \alpha & 0\\
         0 & \alpha^{q^{s}}
    \end{pmatrix}  : \alpha \in \F_{q^r} \right \}$, with $\gcd(s, r) =1$.
\end{theorem}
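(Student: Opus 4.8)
The plan is to establish the cycle of implications $(ii)\Rightarrow(i)\Rightarrow(iii)\Rightarrow(ii)$ and to read off the explicit shape of $G_F^\circ$ (hence the final assertion) from the first of these. Throughout I would use \cite[Proposition~2.2]{survey}: $G_F^\circ$ is a commutative field of matrices, isomorphic as a field to $I_R(\cC_F)$, the isomorphism carrying $\begin{pmatrix} a & b \\ c & d\end{pmatrix}\in G_F$ to $aX+bF\in I_R(\cC_F)$ (and the zero matrix to $0$); in particular $|G_F^\circ|=|I_R(\cC_F)|=q^m$ for some $m\mid n$. I would also use that $F\notin\langle X\rangle_{\F_{q^n}}$ and that a polynomial in standard form has no term in $X$.

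For $(ii)\Rightarrow(i)$, write $F=\sum_{j=0}^{n/r-1}b_jX^{q^{jr+\sigma}}$ in standard form, with $r=r_F$, $1\le\sigma<r$ and $\gcd(\sigma,r)=1$. For $\alpha\in\F_{q^r}$ one has $\alpha^{q^{jr}}=\alpha$, so $F(\alpha x)=\alpha^{q^\sigma}F(x)$, and hence $D_\alpha:=\mathrm{diag}(\alpha,\alpha^{q^\sigma})$ lies in $G_F^\circ$; this exhibits a subfield $\{D_\alpha:\alpha\in\F_{q^r}\}$ of $G_F^\circ$ of order $q^r>q$. For the reverse inclusion I would take $M=\begin{pmatrix} a & b \\ c & d\end{pmatrix}\in G_F^\circ$ and use commutativity: $MD_\alpha=D_\alpha M$ forces, on the off-diagonal entries, $b(\alpha^{q^\sigma}-\alpha)=0=c(\alpha-\alpha^{q^\sigma})$ for all $\alpha\in\F_{q^r}$, and since $\sigma\not\equiv 0\pmod r$ some $\alpha\in\F_{q^r}$ has $\alpha^{q^\sigma}\ne\alpha$; thus $b=c=0$, $M=\mathrm{diag}(a,d)$ and $F(ax)=dF(x)$, i.e. $a^{q^e}=d$ for every $q$-exponent $e$ of $F$. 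Since the differences of these exponents together with $n$ have greatest common divisor $r_F=r$ (here $\gcd(s,n)=1$ is used), this forces $a\in\F_{q^r}$, and then $d=a^{q^\sigma}$. Hence $G_F^\circ=\{\mathrm{diag}(\alpha,\alpha^{q^\sigma}):\alpha\in\F_{q^r}\}$, which gives $(i)$ and the final assertion, $\sigma$ being the exponent written $s$ there.

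For $(i)\Rightarrow(iii)$: if every element of $G_F^\circ$ is diagonal, the isomorphism above sends each $\mathrm{diag}(a,d)$ to $aX$, so $I_R(\cC_F)=\{aX:a\in K\}$ for some $K\subseteq\F_{q^n}$; since composition of the maps $x\mapsto ax$ is multiplication and their sum is of the same shape, $K$ is a subfield of $\F_{q^n}$, and $|I_R(\cC_F)|=|G_F^\circ|=q^r$ forces $K=\F_{q^r}$. For $(iii)\Rightarrow(ii)$: assuming $I_R(\cC_F)=\{\alpha X:\alpha\in\F_{q^r}\}$ with $r>1$, for each $\alpha\in\F_{q^r}$ we get $F\circ(\alpha X)=c_\alpha X+d_\alpha F$; additivity of $F$ makes $\alpha\mapsto d_\alpha$ additive and multiplicative with $d_1=1$ (using $F\notin\langle X\rangle_{\F_{q^n}}$), hence an automorphism of $\F_{q^r}$, say $d_\alpha=\alpha^{q^\sigma}$. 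Writing $F=\sum a_iX^{q^{si}}$ and comparing, in $F(\alpha x)=c_\alpha x+\alpha^{q^\sigma}F(x)$, the coefficient of $x^{q^{si}}$ for each $i\ne 0$ with $a_i\ne 0$ — this is where the absence of a term $X$ in $F$ is needed — gives $\alpha^{q^{si}}=\alpha^{q^\sigma}$ for all $\alpha\in\F_{q^r}$, so $si\equiv\sigma\pmod r$, hence (as $\gcd(s,r)=1$) $i\equiv i'\pmod r$ for any two indices of nonzero terms; therefore $r\mid r_F$ and $r_F>1$, so $F$ is in standard form. Finally, $(ii)\Rightarrow(i)$ gives $|G_F^\circ|=q^{r_F}$, while $|G_F^\circ|=|I_R(\cC_F)|=q^r$, so $r_F=r$.

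The step I expect to be the main obstacle is $(iii)\Rightarrow(ii)$: one must turn the purely algebraic statement that $I_R(\cC_F)$ consists only of scalar maps into the combinatorial fact that the exponents of $F$ occupy a single residue class modulo $r$, and then reconcile the two a priori unrelated integers $r$ and $r_F$; the coefficient comparison at the heart of this is exactly where one uses that $F$ carries no monomial $X$.
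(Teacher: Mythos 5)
The paper itself does not prove this theorem: it is recalled (with the surrounding citations to \cite{survey}) as known background, so there is no internal argument to measure your route against. Judged on its own, your implications $(ii)\Rightarrow(i)$ and $(i)\Rightarrow(iii)$, and the explicit description of $G_F^\circ$ extracted from $(ii)\Rightarrow(i)$, are sound given the correspondence $\begin{pmatrix} a & b\\ c & d\end{pmatrix}\mapsto aX+bF$ between $G_F^\circ$ and $I_R(\cC_F)$ that the paper cites. The problem is the closing implication $(iii)\Rightarrow(ii)$.

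There you compare coefficients in $F(\alpha x)=c_\alpha x+d_\alpha F(x)$ and correctly conclude that all indices $i\neq 0$ of nonzero terms of $F$ lie in a single residue class modulo $r$; but the jump from this to $r\mid r_F$ is exactly the point where you remark that ``the absence of a term $X$ in $F$ is needed''. At that stage nothing gives you $a_0=0$: it is not a hypothesis of the theorem, and it does not follow from $(iii)$. If $a_0\neq 0$, then $\Delta_F$ also contains the differences $i-0$, which need not be divisible by $r$, and $r_F$ can equal $1$ even though all indices $i\neq0$ are congruent to each other. Concretely, take $F=X+X^{q^s}$: it is scattered, $\cC_F=\langle X,X^{q^s}\rangle_{\F_{q^n}}$ is MRD with minimum distance $n-1$, and $I_R(\cC_F)=\{\alpha X:\alpha\in\F_{q^n}\}$ (as the paper records for the pseudoregulus code), so $(iii)$ holds with $r=n>1$; yet $\Delta_F=\{1,\,n-1,\,n\}$, $r_F=1$, and $F$ is not in standard form — correspondingly $G_F^\circ=\bigl\{\bigl(\begin{smallmatrix}\alpha&0\\ \alpha-\alpha^{q^s}&\alpha^{q^s}\end{smallmatrix}\bigr):\alpha\in\F_{q^n}\bigr\}$ is not diagonal. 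So the implication cannot be completed by your coefficient comparison alone: one must first impose or justify the normalization $a_0=0$ (the implicit convention under which the quoted result is used), and your argument neither assumes it nor derives it; rerouting through $(i)$ does not help, since the same example blocks $(iii)\Rightarrow(i)$ as literally stated. A smaller cosmetic point in the same step: an automorphism of $\F_{q^r}$ is a priori a power of the $p$-Frobenius, not of the $q$-Frobenius, so writing $d_\alpha=\alpha^{q^\sigma}$ before the comparison is premature — harmless here, because the identity $\alpha^{q^{si}}=d_\alpha$ you obtain afterwards shows $d_\alpha$ is a $q$-power, but the order of the deductions should be reversed.
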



\begin{theorem}
Let $t\geq 5$, and $\psi_{m,h,s}$ with $(m,h) \in \F_{q^t} \times \F_{q^{2t}}$ as in Theorem \ref{main}. Then,
\begin{itemize}
\item if $t$ is even, then
$$G_{\psi_{m,h,s}}^{\circ}= \left \{ \begin{pmatrix}
    \alpha & 0 \\
     0 & \alpha^{q^{s}}
\end{pmatrix} \colon \alpha \in \F_{q^2} \right \};$$
\item if $t$ is odd, then

\begin{equation*}
\begin{aligned}
    G_{\psi_{m,h,s}}^{\circ}=\Biggl \{ &\begin{pmatrix} \alpha & \xi\frac{m^R}{h^{q^s}+h^{q^{s(t-1)}}} \\
   - \xi \mathrm{N}_{q^{2t}/q^t}(h)\left ( m^{Rq^s+1}  h^{q^s} 
    + m^{q^{s(t-1)}(R+1)}h^{q^{s(t-1)}} \right )  & \alpha \end{pmatrix} \colon \\ 
    & \alpha \in \F_{q}, \,\, \xi \in \F_{q^2}  \textnormal{ s.t. } \xi^{q^s}+\xi=0  \Biggr \}
    \end{aligned}
    \end{equation*}
where $R=-\frac{q^{s(t+1)-1}}{q^{2s}-1}$.
\end{itemize}
In particular $I_R(\cC_{\psi_{m,h,s}})$ is isomorphic to $\F_{q^2}$.
\end{theorem}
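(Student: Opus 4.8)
The plan is to read the stabilizer off the defining functional equation. Write $\psi:=\psi_{m,h,s}$. A matrix $A=\begin{pmatrix}\alpha&\beta\\\gamma&\delta\end{pmatrix}\in\mathrm{GL}(2,q^{2t})$ lies in $G_{\psi}$ exactly when
\[
\psi\bigl(\alpha X+\beta\psi(X)\bigr)=\gamma X+\delta\psi(X)
\]
in $\tilde{\mathcal{L}}_{2t,q,s}$. By additivity $\psi(\alpha X+\beta\psi(X))=\psi(\alpha X)+\psi(\beta\psi(X))$: the term $\psi(\alpha X)$ is supported on the $q^s$-exponents whose indices lie in $\{1,t-1,t+1,2t-1\}\bmod 2t$, while expanding $\psi(X)^{q^{si}}$ for $i\in\{1,t-1,t+1,2t-1\}$ shows $\psi(\beta\psi(X))$ is supported on the indices in $\{0,2,t-2,t,t+2,2t-2\}\bmod 2t$. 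Since $\psi$ is scattered by Theorem~\ref{main}, $\cC_\psi=\langle X,\psi\rangle_{\F_{q^{2t}}}$ is a linear MRD code of minimum distance $2t-1$, so $G_\psi^\circ=G_\psi\cup\{O\}$ is a field isomorphic to $I_R(\cC_\psi)$ by \cite[Proposition~2.2]{survey}; this fact is used at the end.

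For $t$ even the indices $1,t-1,t+1,2t-1$ are all odd, so the set $\Delta_\psi$ of Definition~\ref{standard-form} has greatest common divisor $2$; thus $r_\psi=2$ and $\psi$ is already in standard form. Theorem~\ref{eqRI} applied to $\cC_\psi$ then gives directly that every element of $G_\psi^\circ$ is diagonal, that $|G_\psi^\circ|=q^{r_\psi}=q^{2}$, and that $G_\psi^\circ=\{\mathrm{diag}(\alpha,\alpha^{q^{s}}):\alpha\in\F_{q^2}\}$ (since $s$ is odd, $\alpha^{q^{s}}=\alpha^{q}$ on $\F_{q^2}$); in particular $I_R(\cC_\psi)\cong\F_{q^2}$.

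For $t$ odd, $r_\psi=\gcd(2,t)=1$, so $\psi$ is not in standard form and, by Theorem~\ref{eqRI}, $G_\psi^\circ$ contains a non-diagonal element; here I would compare coefficients directly. For $t\ge5$ the sets $\{1,t-1,t+1,2t-1\}$ and $\{0,2,t-2,t,t+2,2t-2\}$ are disjoint, so the coefficients of $X^{q^{s}},X^{q^{s(t-1)}},X^{q^{s(t+1)}},X^{q^{s(2t-1)}}$ give $\delta=\alpha^{q^{s}}=\alpha^{q^{s(t-1)}}=\alpha^{q^{s(t+1)}}=\alpha^{q^{s(2t-1)}}$; from $\alpha^{q^{s}}=\alpha^{q^{s(t-1)}}$ one gets $\alpha^{q^{s}}$ in the fixed field of $x\mapsto x^{q^{s(t-2)}}$, i.e.\ in $\F_{q^{\gcd(s(t-2),2t)}}=\F_{q^{\gcd(t-2,2t)}}=\F_q$ since $t$ is odd, hence $\alpha=\delta\in\F_q$. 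The diagonal being pinned down, it remains to solve for $\beta$ from the vanishing of the coefficients of $X^{q^{s\kappa}}$, $\kappa\in\{2,t-2,t,t+2,2t-2\}$, and then to read $\gamma$ off the coefficient of $X$. After using $m^{q^{st}}=m$ (because $m\in\F_{q^t}$) and $\mathrm{N}_{q^{2t}/q^t}(h)=\pm1$, the equation from $\kappa=2$ reduces to $\beta^{\,q^{s(t+1)}-q^{s}}=-h^{q^{2s}-1}$, and combining it with the remaining equations (some of which become automatic, the obstruction being a power of $\mathrm{N}_{q^{2t}/q^t}(h)$ with even exponent) forces $\beta=\xi\,m^{R}/\bigl(h^{q^{s}}+h^{q^{s(t-1)}}\bigr)$, with $R$ as in the statement and $\xi$ running over $\{\xi\in\F_{q^{2t}}:\xi^{q^{s}}+\xi=0\}$, a set contained in $\F_{q^2}$; substituting back, the coefficient of $X$ yields $\gamma=-\xi\,\mathrm{N}_{q^{2t}/q^t}(h)\bigl(m^{Rq^{s}+1}h^{q^{s}}+m^{q^{s(t-1)}(R+1)}h^{q^{s(t-1)}}\bigr)$. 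This shows $G_\psi$ is contained in the displayed set, which has exactly $q^2$ elements; conversely $G_\psi^\circ$ is a field containing both the scalar subfield $\F_q I$ and a non-scalar matrix (necessarily with $\beta\ne0$, since $\beta=0$ forces $\gamma=0$ and $\alpha=\delta\in\F_q$ by the computation above), so $|G_\psi^\circ|\ge q^2$ and equality holds; in particular $I_R(\cC_\psi)\cong\F_{q^2}$. (When $h\in\F_{q^t}$ here one necessarily has $h\in\F_q$ with $h^{2}=\pm1$, so $h^{q^{s}}+h^{q^{s(t-1)}}=2h\ne0$ and the argument is unchanged.)

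The main obstacle is the bookkeeping in the odd case: one must track exponents both modulo $2t$ (in the $q^{s}$-indices) and modulo $q^{2t}-1$ (in the exponents of $h$), repeatedly invoke $m^{q^{st}}=m$ and $\mathrm{N}_{q^{2t}/q^t}(h)=\pm1$, and use the non-vanishing statements of Proposition~\ref{p:hcondition} and Proposition~\ref{prop-W} (for instance $h^{q^{s(t-2)}}\ne-h$, which guarantees $h^{q^{s}}+h^{q^{s(t-1)}}\ne0$, and $1+h^{q^{s(t-1)}-q^{s}}\ne0$) to ensure the various index-classes combine into a consistent system whose solution set for $\beta$ is precisely the stated one-parameter family. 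The structural inputs — Theorem~\ref{eqRI} in the even case and the field property of $G_\psi^\circ$ for the final count in the odd case — are what keep the argument from degenerating into an unwieldy case analysis.
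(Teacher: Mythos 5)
Your even-$t$ case and the forward direction of your odd-$t$ analysis coincide with the paper's proof: for $t$ even you read $r_{\psi}=2$ off $\Delta_{\psi}$ and invoke Theorem~\ref{eqRI}, and for $t$ odd the comparison of the coefficients of $X^{q^s},X^{q^{s(t-1)}},X^{q^{s(t+1)}},X^{q^{s(2t-1)}}$ forces $\alpha=\delta\in\F_q$, exactly as in the paper. The genuine gap is in your reverse inclusion for $t$ odd. You assert that, since $\psi_{m,h,s}$ is not in standard form, Theorem~\ref{eqRI} guarantees a non-diagonal element of $G^{\circ}_{\psi_{m,h,s}}$, and your final count $|G^{\circ}_{\psi_{m,h,s}}|\geq q^2$ rests entirely on this. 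That inference is invalid: the failure of condition $(ii)$ of Theorem~\ref{eqRI} only negates the conjunction in $(i)$, which is perfectly compatible with $G^{\circ}\cong\F_q$ (all elements scalar). A concrete counterexample to your inference pattern is the LP binomial $f_{2,s}$ with $n$ odd: it is not in standard form, yet $G^{\circ}_{f_{2,s}}=\{\mathrm{diag}(\alpha,\alpha^{q^s}):\alpha\in\F_q\}$ contains no non-diagonal element. So nothing in your argument excludes the possibility $G^{\circ}_{\psi_{m,h,s}}\cong\F_q$, i.e.\ $I_R(\cC_{\psi_{m,h,s}})\cong\F_q$, which is precisely what has to be ruled out.

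To close the odd case you must actually verify that the non-scalar matrices of the displayed family stabilize $U_{\psi_{m,h,s}}$, which is what the paper does: the constraints on $\beta$ reduce to the single linearized equation \eqref{bq2} together with $\beta^{q^{st}}+\beta h^{q^s-q^{s(2t-1)}}=0$, which bounds the number of admissible $\beta$ by $q$; one then checks directly that every $\beta=\xi m^{R}/(h^{q^s}+h^{q^{s(t-1)}})$ with $\xi^{q^s}+\xi=0$ satisfies these equations (here Proposition~\ref{p:hcondition}, via $h^{q^{s(t-2)}}\neq -h$, gives $h^{q^s}+h^{q^{s(t-1)}}\neq 0$, and the hypotheses $m\in\F_{q^t}$, $\mathrm{N}_{q^{2t}/q^t}(h)=\pm 1$ enter the computation). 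With $\gamma$ then read off from the coefficient of $X$, all coefficient equations hold, giving both inclusions and $|G^{\circ}_{\psi_{m,h,s}}|=q^2$. Your phrase ``combining it with the remaining equations forces $\beta=\dots$'' only asserts the forward containment and, as it stands, cannot substitute for this membership verification; without it (or some other explicit non-scalar element), the claimed equality and the conclusion $I_R(\cC_{\psi_{m,h,s}})\cong\F_{q^2}$ are not established.
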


\begin{proof}
First, assume $t$ even and $\psi:=\psi_{m,h,s}$. It is straightforward to see $$\Delta_{\psi}=\{2,t-2,t,2t-2,2t\}.$$
Thus $\psi$ is in standard form  and $r_{\psi}=2$. By Theorem \ref{eqRI}, we get easily the statement.\\
Now, assume $t$ odd.  Let $A=\begin{pmatrix}  \alpha & \beta \\ \gamma & \delta\end{pmatrix} \in G_{\psi}^\circ$. Then, for any $ x \in \F_{q^{2t}}$ there exists $y \in \F_{q^{2t}}$  such that 
    $$
    A \begin{pmatrix}
        x \\ \psi_{m,h,s}(x)
    \end{pmatrix} =\begin{pmatrix}
        y \\ \psi_{m,h,s}(y)
    \end{pmatrix}.
    $$
    
    Hence, $\gamma x+\delta\psi_{m,h,s}(x)=\psi_{m,h,s}(\alpha x+\beta \psi_{m,h,s}(x))$ for any $x \in \F_{q^{2t}}$ and this leads to the following equation in $\tilde{\mathcal{L}}_{2t,q,s}$
\begin{equation}\label{eq:equivalence}
\gamma X + \delta \psi_{m,h,s}(X)=\psi_{m,h,s}(\alpha X +\beta \psi_{m,h,s}(X)).
\end{equation} 
  Equating the coefficients with the same $q^s$-degree in  \eqref{eq:equivalence} and recalling that $m \in \F_{q^t}$, we get the following conditions:
    \begin{equation}\label{coeff:$x$}
        \gamma=m\beta^{q^s}h^{q^s-1}-mh^{1-q^{s(t+1)}}\beta^{q^{s(t+1)}}-m^{q^{s(t-1)}}\beta^{q^{s(t-1)}}h^{q^{s(t-1)}-1}+h^{1-q^{s(2t-1)}}m^{q^{s(t-1)}}\beta^{q^{s(2t-1)}};
    \end{equation}
\begin{equation}\label{coeff:$x^{q^s}$}
        \delta m=m\alpha^{q^s};
    \end{equation}
    \begin{equation}\label{coeff:$x^{q^{2s}}$}  m^{q^s+1}\beta^{q^s}+m^{q^s+1}\beta^{q^{s(t+1)}}h^{1-q^{2s}}=0;
    \end{equation}
    \begin{equation}\label{coeff:$x^{q^{s(t-2)}}$}  \beta^{q^{s(t-1)}}h^{q^{s(t-1)}-q^{s(t-2)}}+h^{1-q^{s(2t-1)}}\beta^{q^{s(2t-1)}}=0;
    \end{equation}
    \begin{equation}\label{coeff:$x^{q^{s(t-1)}}$}  \delta=\alpha^{q^{s(t-1)}};
    \end{equation}
    \begin{equation}\label{coeff:$x^{q^{st}}$} m\beta^{q^s}-m\beta^{q^{s(t+1)}}h^{1-q^{st}}+m^{q^{s(t-1)}}\beta^{q^{s(t-1)}}-m^{q^{s(t-1)}}\beta^{q^{s(2t-1)}}h^{1-q^{st}}=0;
    \end{equation}
    \begin{equation}\label{coeff:$x^{q^{s(t+1)}}$}
-\delta mh^{1-q^{s(t+1)}}=-mh^{1-q^{s(t+1)}}\alpha^{q^{s(t+1)}};
    \end{equation}
    \begin{equation}\label{coeff:$x^{q^{s(t+2)}}$}
-m^{q^s+1}\beta^{q^s}h^{q^s-q^{s(t+2)}}-h^{1-q^{s(t+1)}}m^{q^s+1}\beta^{q^{s(t+1)}}=0;
    \end{equation}
    \begin{equation}\label{coeff:$x^{q^{s(2t-2)}}$}
\beta^{q^{s(t-1)}}+\beta^{q^{s(2t-1)}}h^{1-q^{s(2t-2)}}=0;
    \end{equation}

    \begin{equation}\label{coeff:$x^{q^{s(2t-1)}}$}
\delta h^{1-q^{s(2t-1)}}=h^{1-q^{s(2t-1)}}\alpha^{q^{s(2t-1)}}.
    \end{equation}
    By \eqref{coeff:$x^{q^s}$}, \eqref{coeff:$x^{q^{s(t-1)}}$}, \eqref{coeff:$x^{q^{s(t+1)}}$}, \eqref{coeff:$x^{q^{s(2t-1)}}$}, we have $\alpha \in \F_q$ and $\delta=\alpha$. Note that \eqref{coeff:$x^{q^{2s}}$},  \eqref{coeff:$x^{q^{s(t-2)}}$}, \eqref{coeff:$x^{q^{s(t+2)}}$} and  \eqref{coeff:$x^{q^{s(2t-2)}}$} are  equivalent under the hypothesis on $h$. Putting together these conditions with \eqref{coeff:$x^{q^{st}}$} we get
    \begin{equation}\label{bq2}
    m^{q^s}\beta^{q^{2s}}(1+h^{q^{3s}-q^{s(t+1)}})=m\beta h^{q^s-q^{s(2t-1)}}(1+h^{q^{s(2t-1)}-q^{s(t+1)}}).
    \end{equation}
By Proposition \ref{p:hcondition}, we have $1+h^{q^{3s}-q^{s(t+1)}} \ne 0$ and $1+h^{q^{s(2t-1)}-q^{s(t+1)}} \ne 0$. The roots of equation above in the unknown $\beta$ determine the kernel of an $\F_{q^2}$-linear map, then it has one or $q^2$ solutions. By \eqref{coeff:$x^{q^{s(2t-2)}}$}, since $\beta^{q^{st}}+\beta h^{q^s-q^{s(2t-1)}}=0$, it follows that 
the number of allowable values $\beta$ in \eqref{bq2} is either one or $q$. Let 
$$z=\frac{1}{h^{q^s}+h^{q^{s(t-1)}}}$$ and put $R:=-\frac{q^{s(t+1)}-1}{q^{2s}-1}$. Clearly, since $t$ is odd, $R$ is an integer. Then, it is easy to check that the solutions of Equation \eqref{bq2} are of the shape $\beta=\xi z m^R$ for $\xi \in \F_{q^2}$ such that $\xi^{q^s}+\xi=0$.
Finally, by \eqref{coeff:$x$}, we get
\begin{equation*}
\begin{aligned}
            \gamma&=m\beta^{q^s}h^{q^s-1}-mh^{1-q^{s(t+1)}}\beta^{q^{s(t+1)}}-m^{q^{s(t-1)}}\beta^{q^{s(t-1)}}h^{q^{s(t-1)}-1}+h^{1-q^{s(2t-1)}}m^{q^{s(t-1)}}\beta^{q^{s(2t-1)}}\\
            &=-\xi \Bigl (
 z^{q^s} m^{R q^s+1} h^{q^s-1}
+ z^{q^{s(t-1)}} m^{q^{s(t-1)(R+1)}} h^{q^{s(t-1)}-1} \\
& \quad + z^{q^{s(t+1)}} m^{1+R q^s} h^{1-q^{s(t+1)}}
+ z^{q^{s(2t-1)}} m^{q^{s(t-1)}(R+1)} h^{1-q^{s(2t-1)}}
\Bigr )\\
            &=-\xi \left( m^{Rq^s+1} \left (\frac{h^{q^s-1}}{h^{q^{2s}}+h^{q^{st}}}+\frac{h^{1-q^{s(t+1)}}}{h^{q^{s(t+2)}}+h} \right)+m^{q^{s(t-1)}(R+1)} \left ( \frac{h^{q^{s(t-1)}-1}}{h^{q^{st}}+h^{q^{s(2t-2)}}} + \frac{h^{1-q^{s(2t-1)}}}{h+h^{q^{s(t-2)}}}\right) \right )\\
&=-\xi \Biggl ( m^{Rq^s+1}  h^{q^s} \left (\frac{1}{h^{q^{2s}+1}+N}+\frac{1}{h^{q^{s(t+2)}+q^{st}}+N} \right)\\
& \quad + m^{q^{s(t-1)}(R+1)}h^{q^{s(t-1)}} \left ( \frac{1}{N+h^{q^{s(2t-2)}+1}} + \frac{1}{N+h^{q^{s(t-2)}+q^{st}}}\right) \Biggr ),
\end{aligned}
\end{equation*}
where  
$N = \mathrm{N}_{q^{2t}/q^t}(h)$. Then,
\begin{equation*}
    \begin{aligned}
    \gamma & = -\xi \Biggl ( m^{Rq^s+1}  h^{q^s} \left (\frac{1}{h^{q^{2s}+1}+N}+\frac{h^{q^{2s}+1}}{1+N h^{q^{2s}+1}} \right)\\
    & \quad + m^{q^{s(t-1)}(R+1)}h^{q^{s(t-1)}} \left ( \frac{h^{q^{s(t-2)}+q^{st}}}{N h^{q^{s(t-2)}+q^{st}}+1} + \frac{1}{N+h^{q^{s(t-2)}+q^{st}}}\right) \Biggr )\\
&= 
   - \xi \mathrm{N}_{q^{2t}/q^t}(h)\left ( m^{Rq^s+1}  h^{q^s} 
    + m^{q^{s(t-1)}(R+1)}h^{q^{s(t-1)}} \right ).
\end{aligned}
\end{equation*}
Hence, $\vert I_R(\cC_{\psi_{m,h,s}}) \vert =q^2$. Since $\psi_{m,h,s}$ is scattered,  $I_R(\cC_{\psi_{m,h,s}})$ is a subfield of $\F_{q^{2t}}$, so necessarily $I_R(\cC_{\psi_{m,h,s}})$ is isomorphic to $\F_{q^{2}}$.

\end{proof}

\section{Necessary conditions}\label{sec:necessarie}

In this section we provide some partial results on the necessary conditions for the quadrinomial $\psi_{m,h,s}$ to be scattered. 

\begin{theorem}\label{q-1}
    If $m \in \mathscr{P}^-_{s}$ and $h \in \F_{q^t}$ with $\mathrm{N}_{q^{2t}/q^t}(h)=\pm{1}$, then $\psi_{m,h,s}$ is not scattered.
\end{theorem}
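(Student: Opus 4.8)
The plan is to prove that $\psi_{m,h,s}$ is not scattered by producing two $\F_q$-linearly independent elements $y,z\in\F_{q^{2t}}^*$ with $\psi_{m,h,s}(y)/y=\psi_{m,h,s}(z)/z$, which contradicts the characterization \eqref{scattpol} of scattered polynomials.

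First I would normalize the polynomial. Since $h\in\F_{q^t}$, $\mathrm{N}_{q^{2t}/q^t}(h)=h^{q^t+1}=h^2$, so $h^2\in\{1,-1\}$; note that $h^2=-1$ together with $q\equiv 3\pmod 4$ forces $t$ even. As $s$ is odd we have $st\equiv t\pmod{2t}$, hence $x^{q^{st}}=x^{q^t}$ on $\F_{q^{2t}}$, and the computations already carried out in Remark \ref{hfqt} show that $\psi_{m,h,s}$ equals, as a map on $\F_{q^{2t}}$, either
$$m\,D(X)^{q^s}+S(X)^{q^{s(t-1)}}\qquad\text{or}\qquad m\,S(X)^{q^s}+D(X)^{q^{s(t-1)}},$$
where $D(X)=X-X^{q^t}$ and $S(X)=X+X^{q^t}$: the first shape in case $h^2=1$, or $h^2=-1$ and $q\equiv 1\pmod 4$; the second in case $h^2=-1$ and $q\equiv 3\pmod 4$. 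Write $x=x_0+x_1$ with $x_0\in\F_{q^t}$, $x_1\in\ker\mathrm{Tr}_{q^{2t}/q^t}$ (Lemma \ref{power}$(i)$). Since $D$ annihilates $\F_{q^t}$ and acts as $y\mapsto 2y$ on $\ker\mathrm{Tr}_{q^{2t}/q^t}$, whereas $S$ annihilates $\ker\mathrm{Tr}_{q^{2t}/q^t}$ and acts as $y\mapsto 2y$ on $\F_{q^t}$, the first shape gives $\psi_{m,h,s}(x_0)=2x_0^{q^{s(t-1)}}\in\F_{q^t}$ and $\psi_{m,h,s}(x_1)=2m\,x_1^{q^s}\in\ker\mathrm{Tr}_{q^{2t}/q^t}$ (using that $q^s$ is odd and $m\in\F_{q^t}$), while the second shape gives $\psi_{m,h,s}(x_0)=2m\,x_0^{q^s}\in\F_{q^t}$ and $\psi_{m,h,s}(x_1)=2x_1^{q^{s(t-1)}}\in\ker\mathrm{Tr}_{q^{2t}/q^t}$. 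In particular $\psi_{m,h,s}$ preserves each of the two summands.

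I would then search for a common eigenvalue $c\in\F_{q^t}^*$. In the first shape, a nonzero $x_0\in\F_{q^t}$ with $\psi_{m,h,s}(x_0)=cx_0$ exists iff $c/2\in\{u^{q^{s(t-1)}-1}:u\in\F_{q^t}^*\}$, which is $\ker\mathrm{N}_{q^t/q}$ (the norm-one elements) by Hilbert's Theorem 90, since $\gcd(s(t-1),t)=1$; and a nonzero $x_1\in\ker\mathrm{Tr}_{q^{2t}/q^t}$ with $\psi_{m,h,s}(x_1)=cx_1$ exists iff $c/(2m)\in\{u^{q^s-1}:u\in\ker\mathrm{Tr}_{q^{2t}/q^t}\setminus\{0\}\}=\mathscr{P}^-_s\setminus\{0\}$. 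In the second shape the analogous conditions read $c/(2m)\in\ker\mathrm{N}_{q^t/q}$ (now using $\gcd(s,t)=1$) and $c/2\in\{u^{q^{s(t-1)}-1}:u\in\ker\mathrm{Tr}_{q^{2t}/q^t}\setminus\{0\}\}$, and this last set again equals $\mathscr{P}^-_s\setminus\{0\}$ by the argument used in Lemma \ref{lem:P}$(i)$ applied with $s$ replaced by $s(t-1)$, since $\gcd(s(t-1),2t)=1$ when $t$ is even. The key observation is that $m\in\mathscr{P}^-_s\setminus\{0\}$ forces $\mathrm{N}_{q^t/q}(m)=-1$: writing $m=w^{q^s-1}$ with $0\neq w\in\ker\mathrm{Tr}_{q^{2t}/q^t}$ one has $w^{q^t-1}=-1$, so
$$\mathrm{N}_{q^t/q}(m)=\bigl(w^{q^s-1}\bigr)^{(q^t-1)/(q-1)}=\bigl(w^{q^t-1}\bigr)^{(q^s-1)/(q-1)}=(-1)^{(q^s-1)/(q-1)}=-1,$$
because $(q^s-1)/(q-1)=1+q+\cdots+q^{s-1}$ is a sum of $s$ odd terms with $s$ odd. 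Choosing any $y_1\in\mathscr{P}^-_s\setminus\{0\}$ (for instance $y_1=m$) and putting $c=2my_1$ in the first shape, $c=2y_1$ in the second, both required memberships hold because $\mathrm{N}_{q^t/q}(my_1)=(-1)(-1)=1$ and $\mathrm{N}_{q^t/q}(y_1/m)=(-1)/(-1)=1$; this yields nonzero $x_0\in\F_{q^t}$ and $x_1\in\ker\mathrm{Tr}_{q^{2t}/q^t}$ with $\psi_{m,h,s}(x_0)/x_0=c=\psi_{m,h,s}(x_1)/x_1$. As $\F_{q^t}\cap\ker\mathrm{Tr}_{q^{2t}/q^t}=\{0\}$, the elements $x_0$ and $x_1$ are $\F_q$-linearly independent, and the claim follows.

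The only thing requiring care is the exponent bookkeeping: one must check, for each of the two shapes, that the twist exponent on the $\F_{q^t}$-component generates $\mathrm{Gal}(\F_{q^t}/\F_q)$ so that Hilbert 90 identifies its image with $\ker\mathrm{N}_{q^t/q}$, and that the twist exponent on the $\ker\mathrm{Tr}_{q^{2t}/q^t}$-component produces precisely $\mathscr{P}^-_s$ — immediate when the exponent is $q^s$, and a consequence of Lemma \ref{lem:P}$(i)$ when it is $q^{s(t-1)}$, which only occurs for $t$ even. The rest is routine $\F_q$-linear algebra.
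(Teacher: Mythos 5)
Your proof is correct, and it reaches the conclusion by a route that is related to, but genuinely different from, the one in the paper. Both arguments hinge on the splitting $\F_{q^{2t}}=\F_{q^t}\oplus\ker\Tr_{q^{2t}/q^t}$ and on writing $m=w^{q^s-1}$ with $w\in\ker\Tr_{q^{2t}/q^t}$, but the mechanisms differ. The paper first invokes Remark \ref{hfqt} to reduce (via the adjoint, in the case $q\equiv 3\pmod 4$, $h^2=-1$) to the single shape $m(X^{q^s}-X^{q^{s(t+1)}})+X^{q^{s(t-1)}}+X^{q^{s(2t-1)}}$, then substitutes $x=x_0+x_1$, $y=y_0+y_1$ into the scatteredness equation, splits it into its $\F_{q^t}$ and $\ker\Tr_{q^{2t}/q^t}$ components, and exhibits the explicit mixed witnesses $x=x_0+\gamma^{-1}x_0^{-q^{s(2t-1)}}$ and $y=x_0+\xi\gamma^{-1}x_0^{-q^{s(2t-1)}}$, $\xi\in\F_q\setminus\{1\}$. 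You instead observe that both summands are $\psi$-invariant and that the restrictions are the twisted scalar maps $x_0\mapsto 2x_0^{q^{s(t-1)}}$ (or $2mx_0^{q^s}$) and $x_1\mapsto 2mx_1^{q^s}$ (or $2x_1^{q^{s(t-1)}}$), so a non-scattered pair can be taken with one ``pure'' element in each summand provided the two ratio sets share a value; this is exactly where the hypothesis enters, through the computation $\mathrm{N}_{q^t/q}(m)=(-1)^{(q^s-1)/(q-1)}=-1$ for nonzero $m\in\mathscr{P}^-_s$, Hilbert 90 on the $\F_{q^t}$-side, and (for the $t$ even, $q\equiv3\pmod 4$, $h^2=-1$ shape) the identity $\mathscr{P}^-_{s(t-1)}=\mathscr{P}^-_s$, which is legitimately covered by Lemma \ref{lem:P}$(i)$ since $\gcd(s(t-1),2t)=1$ there. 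Your treatment of the second shape directly, rather than through the adjoint reduction, makes the argument more self-contained; and the eigenvalue viewpoint is more structural, essentially explaining \emph{why} $\mathscr{P}^-_s$ is the obstruction (the common-ratio condition is the norm condition $\mathrm{N}_{q^t/q}(m)=-1$), whereas the paper's computation is shorter once the reduction is granted. The only caveats are the standing assumption $m\neq 0$ (needed for your choice $y_1=m$ and for the norm computation), which the paper also uses implicitly since Section \ref{ScattCond} fixes $(m,h)\in\F_{q^t}^*\times\F_{q^{2t}}^*$, and the exponent bookkeeping you already flag, which checks out.
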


\begin{proof}
By Remark \ref{hfqt}, it is enough to consider 
$$
\psi_{m,s}:=m(X^{q^s}-X^{q^{s(t+1)}})+X^{q^{s(t-1)}}+X^{q^{s(2t-1)}}.
$$
Then, let the equation \begin{equation}\label{eqscatt}
 \psi^{q^s}_{m,s}(x)y^{q^s}-\psi^{q^s}_{m,s}(y)x^{q^s}=0,        \end{equation} 
and let $m=\gamma^{q^s-1}$ with $\gamma\in \ker \mathrm{Tr}_{q^{2t}/q^t}$ .
    Since $\mathbb{F}_{q^{2t}}=\mathbb{F}_{q^t}\oplus \ker \mathrm{Tr}_{q^{2t}/q^t}$ , we know that $x=x_0+x_1$, $y=y_0+y_1$, with $x_0,y_0\in\mathbb{F}_{q^t}$ and $x_1,y_1 \in \ker \mathrm{Tr}_{q^{2t}/q^t}$. Then we have

    \begin{eqnarray*}
        \psi_{m,s}^{q^s}(x_0+x_1) &=& m^{q^s}\Big((x_0+x_1)^{q^{2s}}-(x_0+x_1)^{q^{s(t+2)}}\Big)+ (x_0+x_1)^{q^{st}}+(x_0+x_1)\\ &=& m^{q^s}\Big((x_0+x_1)^{q^{2s}}-(x_0-x_1)^{q^{2s}}\Big)+x_0-x_1+x_0+x_1  \\ &=&  2x_1^{q^{2s}}m^{q^s} + 2x_0.
    \end{eqnarray*}

    Hence, Equation \eqref{eqscatt} reads $A+B=0$, where

    $$A:=x_0y_0^{q^s}+m^{q^s}x_1^{q^{2s}}y_1^{q^s}-y_0x_0^{q^s}-m^{q^s}y_1^{q^{2s}}x_1^{q^s},$$ and $$B:=x_0y_1^{q^s}+m^{q^s}x_1^{q^{2s}}y_0^{q^s}-y_0x_1^{q^s}-m^{q^s}y_1^{q^{2s}}x_0^{q^s}.$$ Note that $m^{q^s}=(\gamma^{q^s})^{q^s-1}$ and $\gamma^{q^s}\in \ker \mathrm{Tr}_{q^{2t}/q^t}$. By Lemma \ref{power}, we have $m^{q^s}\in\mathbb{F}_{q^t}$ and $x_1^{q^s},x_1^{q^{2s}},y_1^{q^s},y_1^{q^{2s}}\in \ker \mathrm{Tr}_{q^{2t}/q^t}$. This implies $A\in\mathbb{F}_{q^t} $ and $B\in \ker \mathrm{Tr}_{q^{2t}/q^t}$, and hence from $\mathbb{F}_{q^t}\cap \ker \mathrm{Tr}_{q^{2t}/q^t}=\{0\}$, we obtain
    \begin{equation}\label{sistemaNec}
        \begin{cases}
         x_0y_0^{q^s}+m^{q^s}x_1^{q^{2s}}y_1^{q^s}-y_0x_0^{q^s}-m^{q^s}y_1^{q^{2s}}x_1^{q^s}=0\\
          x_0y_1^{q^s}+m^{q^s}x_1^{q^{2s}}y_0^{q^s}-y_0x_1^{q^s}-m^{q^s}y_1^{q^{2s}}x_0^{q^s}=0.
        \end{cases}
    \end{equation}
In particular, for $x_0=y_0$, \eqref{sistemaNec} reads
\begin{equation}\label{sistemNec2}
        \begin{cases}
          x_1^{q^{2s}}y_1^{q^s}-y_1^{q^{2s}}x_1^{q^s}=0\\
          x_0y_1^{q^s}+m^{q^s}x_1^{q^{2s}}x_0^{q^s}-x_0x_1^{q^s}-m^{q^s}y_1^{q^{2s}}x_0^{q^s}=0,
        \end{cases}
    \end{equation}    
    whence $y_1=\xi x_1$, with $\xi\in\mathbb{F}_{q}$, must hold. By substituting $y_1=\xi x_1$ in the second equation of \eqref{sistemNec2} we have

    \begin{equation*}
         x_0\xi x_1^{q^s}+m^{q^s}x_1^{q^{2s}}x_0^{q^s}-x_0x_1^{q^s}-m^{q^s}\xi x_1^{q^{2s}}x_0^{q^s}=(\xi-1)(x_0x_1^{q^s}-m^{q^s}x_1^{q^{2s}}x_0^{q^s})=0.
    \end{equation*}
    
    We can choose $x_1=\gamma^{-1}x_0^{-q^{s(2t-1)}}$ and $y_1=\xi x_1$ with $\xi\neq 1$, to have the needed solution.
Therefore, $x=x_0+\gamma^{-1}x_0^{-q^{s(2t-1)}}$ and $y=x_0+\xi\gamma^{-1}x_0^{-q^{s(2t-1)}}$ are solutions of \eqref{eqscatt} such that $x/y\not\in\mathbb{F}_{q}$, which proves that $\psi_{m,s}$ is not scattered.
\end{proof}

\begin{remark}
\textnormal{Theorem \ref{q-1}, together with \cite[Proposition 2.4]{SmaZaZu}, show that the sufficient conditions provided in \cite{SmaZaZu} are also necessary.}
\end{remark}

Based on computational results, the conditions on $m$ and $h$ in Theorem~\ref{main} appear to be necessary for the scatteredness of $\psi_{m,h,s}$. We conclude this section with the following open problem.

\begin{conjecture}\label{conj}
\textnormal{Let $t \geq 3$, $1 \leq s \leq 2t-1$ and consider the $q^s$-polynomial
$$\psi_{m,h,s}=m(X^{q^s}-h^{1-q^{s(t-1)}}X^{q^{s{t-1}}})+X^{q^{s(t+1)}}+h^{1-q^{s(2t-1)}}X^{q^{s(2t-1)}} \in \tilde{\mathcal{L}}_{n,q,s},$$
with $(m, h ) \in \F_{q^t} \times \F_{q^{2t}}$. Then, $\psi_{m,h,s}$ is scattered if and only if $m$ and $h$ satisfy the assumptions in Theorem~\ref{main}}.
\end{conjecture}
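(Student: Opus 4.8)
The ``if'' direction of Conjecture~\ref{conj} is exactly Theorem~\ref{main} (up to the relabelling $t\pm1\mapsto t\mp1$ in the definition of the quadrinomial, and, if needed, passing to the adjoint, which preserves scatteredness), so the whole content lies in the converse: \emph{if $\psi_{m,h,s}$ is scattered then $(m,h)$ falls into one of the admissible configurations of Theorem~\ref{main}}. The plan is to prove the contrapositive: for every way in which the hypotheses of Theorem~\ref{main} can fail, produce $x\in\F_{q^{2t}}^*$ and $\gamma\in\F_{q^{2t}}\setminus\F_q$ with $\psi_{m,h,s}(\gamma x)=\gamma\,\psi_{m,h,s}(x)$. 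I would organize the discussion by the value of $\mathrm{N}_{q^{2t}/q^t}(h)$, since that quantity governs which structural machinery is available.

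First I would dispose of the sub-families that are already essentially in hand. If $h\in\F_{q^t}$ with $\mathrm{N}_{q^{2t}/q^t}(h)=\pm1$ and $m\in\mathscr{P}^-_s$, Theorem~\ref{q-1} gives non-scatteredness; combined with \cite[Proposition~2.4]{SmaZaZu} and the explicit reductions of Remark~\ref{hfqt}, this settles $h\in\F_{q^t}$ completely. If $t$ is odd, $q\equiv3\pmod4$ and $h^2=-1$, then $h\in\F_{q^{2t}}\setminus\F_{q^t}$ and Proposition~\ref{h2=-1} shows $\F_{q^t}\subseteq\ker\psi_{m,h,s}$, hence non-scattered. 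So the residual work concerns $h\in\F_{q^{2t}}\setminus\F_{q^t}$ with $h^2\neq-1$.

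For the block of cases with $\mathrm{N}_{q^{2t}/q^t}(h)=\pm1$ in which $m$ violates the required membership in $\F_{q^t}\setminus(\mathscr{P}^+_s\cup\mathscr{P}^-_s)$ or in $\mathscr{P}^+_s$, I would run the proofs of Theorems~\ref{t:main} and~\ref{norma=1} in reverse. Those arguments show that scatteredness forces a chain of constraints terminating in ``$\mu=-1$'', a contradiction whose \emph{sole} source is $m\notin\mathscr{P}^-_s$ (Cases~1--2) or $m\notin\mathscr{P}^+_s$ via Proposition~\ref{prop-W} and the equation~\eqref{eq:m(q+1)} in Case~3.1. When instead $m\in\mathscr{P}^-_s$ (resp.\ $m\in\mathscr{P}^+_s$) the same chain becomes \emph{consistent}: using Proposition~\ref{prop-W} to choose $x_1\in\ker L_m$, $x_2\in\ker M$ realizing the prescribed $(q^s\mp1)$-power representation of $m$, one solves the relevant equation (e.g.\ \eqref{case1} or~\eqref{eq:m(q+1)}) for $b$ or $a$, and then backtracks through \eqref{splitted}--\eqref{system1} to recover $\gamma=\lambda_1+a\in\F_{q^{2t}}\setminus\F_q$ witnessing non-scatteredness. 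The delicate point is to verify that the residual scalar identity — the one that produced ``$\mu=-1$'' as an obstruction — is now automatically satisfied; this is precisely where the trichotomy ($t$ even; $t$ odd with $q\equiv1\pmod4$; $t$ odd with $q\equiv3\pmod4$, further split by $\mathrm{N}_{q^{2t}/q^t}(h)=\pm1$) must be tracked carefully so that the three regimes of Theorem~\ref{main} come out exactly right, in particular the asymmetry by which $\mathrm{N}(h)=-1$ \emph{demands} $m\in\mathscr{P}^+_s$ while $\mathrm{N}(h)=1$ \emph{forbids} it when $t$ is odd and $q\equiv3\pmod4$.

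The genuine obstacle is the range $\mathrm{N}_{q^{2t}/q^t}(h)\notin\{1,-1\}$, where essentially none of Section~\ref{ScattCond} survives: the direct-sum decomposition $\F_{q^{2t}}=\ker L_m\oplus\ker M=\im L_m\oplus\im M$, the descriptions~\eqref{im-1}, and Propositions~\ref{product-a}, \ref{product-b}, \ref{prop-W} all use $\mathrm{N}(h)=\pm1$ in an essential way. Here I would abandon the above bookkeeping and study $\ker\psi_{m,h,s}$ directly: since $\psi_{m,h,s}(x)=0$ is equivalent to $L_m(x)=-M(x)$, the hope is that when $\mathrm{N}_{q^{2t}/q^t}(h)^2\neq1$ the subspaces $\im L$ and $\im M$ meet non-trivially, forcing $\dim_{\F_q}\ker\psi_{m,h,s}\geq2$ and hence $|\ker\psi_{m,h,s}|\geq q^2>q$, which already contradicts scatteredness (take $m=0$ in the defining property). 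Making this precise requires computing $\im L\cap\im M$ and the corresponding pre-images for arbitrary $h$, presumably through a rank computation on the Dickson matrix of $\psi_{m,h,s}$; controlling that rank uniformly in $(m,h)$ and in $s$ is the step I expect to be hardest, and is the most plausible reason the statement remains only conjectural at present.
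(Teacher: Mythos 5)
First, be aware that the paper itself contains no proof of this statement: it is posed explicitly as a conjecture (an ``open problem'') supported only by computational evidence, so what you have written cannot be checked against an existing argument, and indeed what you propose is a programme rather than a proof. The ``if'' direction is fine (it is Theorem~\ref{main}, modulo the swap of the exponents $s(t-1)$ and $s(t+1)$, which is handled by passing to the adjoint as in Remark~\ref{hfqt}), and your reductions via Theorem~\ref{q-1}, Remark~\ref{hfqt} and Proposition~\ref{h2=-1} correctly dispose of $h\in\F_{q^t}$ and of $h^2=-1$. But the two blocks where the conjecture actually lives are left unproved in your sketch, and the gaps are genuine, not merely technical.

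Concretely: (1) for $h\in\F_{q^{2t}}\setminus\F_{q^t}$ with $\mathrm{N}_{q^{2t}/q^t}(h)=\pm1$ and $m$ in the excluded sets, your claim that the proofs of Theorems~\ref{t:main} and~\ref{norma=1} ``run in reverse'' is unsubstantiated. Those proofs derive a contradiction from \emph{one} consequence of \eqref{e:psirx} (for instance \eqref{case1} or \eqref{eq:m(q+1)}); when $m\in\mathscr{P}^-_s$ or $m\in\mathscr{P}^+_s$ that particular obstruction disappears, but to exhibit non-scatteredness you must produce $x_1\in\ker L_m$, $x_2\in\ker M$, $a\in\ker R$, $b=h^{q^{s(t-1)}-q^s}a$ and $\lambda_1,\lambda_2$ satisfying the \emph{full} system \eqref{system1} simultaneously, and nothing in your sketch shows the remaining constraints are compatible; the only place such a witness is actually constructed is Theorem~\ref{q-1}, where $h\in\F_{q^t}$ collapses the computation to the splitting $\F_{q^{2t}}=\F_{q^t}\oplus\ker\mathrm{Tr}_{q^{2t}/q^t}$, and that construction does not transfer to $h\notin\F_{q^t}$. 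Moreover, the asymmetric regime ($t$ odd, $q\equiv3\pmod4$), where necessity would require showing non-scatteredness for \emph{all} $m\notin\mathscr{P}^+_s$ when $\mathrm{N}_{q^{2t}/q^t}(h)=-1$, is not addressed by any reversal of a single case of the sufficiency proof. (2) For $\mathrm{N}_{q^{2t}/q^t}(h)\notin\{1,-1\}$ your argument is a non sequitur: a nontrivial intersection $\im L\cap\im M$ does not produce elements of $\ker\psi_{m,h,s}$ (solving $L_m(x)=-M(x)$ requires a common \emph{preimage} relation, not a common image vector), and even $\dim_{\F_q}\ker\psi_{m,h,s}=1$ is compatible with scatteredness, so the inequality you would need, namely a kernel of $\F_q$-dimension at least $2$ (or, more generally, some fiber of $x\mapsto\psi_{m,h,s}(x)/x$ of size exceeding $q$), is exactly the open point; no Dickson-matrix rank bound is carried out, and you yourself flag this as the hardest step. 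As it stands, the proposal identifies the right sub-cases but proves neither of the two essential ones, so the statement remains what the paper says it is: a conjecture.
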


\section*{Acknowledgement}
The research  was supported by the Italian National Group for Algebraic and Geometric Structures and their Applications (INdAM -- GNSAGA Project CUP E53C24001950001). 

\vspace{1cm}
\noindent Dipartimento di Matematica e Applicazioni “Renato Caccioppoli”\\
Università degli Studi di Napoli Federico II,\\
Via Cintia, Monte S. Angelo I-80126 Napoli, Italy. \\
Email addresses: \\
\texttt{\{alessandro.giannoni, giovanni.longobardi\}@unina.it}

\vspace{1cm}
\noindent Dipartimento di Matematica e Informatica\\
Università di Perugia,\\
Via Vanvitelli 1, 06123 Perugia, Italy. \\
Email address: \\
\texttt{\{giovannigiuseppe.grimaldi, marco.timpanella\}@unipg.it}

\newpage

\section*{Appendix }

In this section, we provide the proof of Theorem \ref{equivalence}.

\begin{proof}
Let suppose that there exists an invertible matrix
\[
M=\begin{pmatrix}\alpha & \beta \\ \gamma & \delta\end{pmatrix}\in\mathrm{GL}(2, q^{2t})
\]
such that, for any $x \in \F_{q^{2t}}$,
\[
M\begin{pmatrix}x\\[4pt]\psi_{\mu,k,\ell}(x)\end{pmatrix}
=
\begin{pmatrix}y\\[4pt]\psi_{m,h,s}(y)\end{pmatrix}
.
\]
where   $y \in \F_{q^{2t}}$. 
Then, we get  $y= \alpha x + \beta\,\psi_{\mu,k,\ell}(x)$ and 
$\psi_{m,h,s}(y) = \gamma x + \delta\,\psi_{\mu,k,\ell}(x)$,
and hence for each $x \in \F_{q^{2t}}$,
\begin{equation}\label{polequiv}
\psi_{m,h,s}(\alpha x + \beta \psi_{\mu,k,\ell}(x)) - \gamma x - \delta \psi_{\mu,k,\ell}(x)=0.   
\end{equation}

We have
\begin{equation*}
\begin{aligned}
\psi_{m,h,s}(\alpha x + \beta \psi_{\mu,k,\ell}(x)) 
&=
m\Bigl(\alpha^{q^s} x^{q^s} + \beta^{q^s} \psi_{\mu,k,\ell}(x)^{q^s}
- h^{1-q^{s(t+1)}}(\alpha^{q^{s(t+1)}}x^{q^{s(t+1)}} \\
&+ \beta^{q^{s(t+1)}} \psi_{\mu,k,\ell}(x)^{q^{s(t+1)}})\Bigr)+ \alpha^{q^{s(t-1)}}x^{q^{s(t-1)}} + \beta^{q^{s(t-1)}} \psi_{\mu,k,\ell}(x)^{q^{s(t-1)}}\\
&+ h^{1-q^{s(2t-1)}}(\alpha^{q^{s(2t-1)}}x^{q^{s(2t-1)}} + \beta^{q^{s(2t-1)}} \psi_{\mu,k,\ell}(x)^{q^{s(2t-1)}})
\end{aligned}    
\end{equation*}
and each term $\psi_{\mu,k,\ell}(x)^{q^e}$ can be expanded as

\begin{equation*}
\psi_{\mu,k,\ell}(x)^{q^e} = \mu^{q^e}(x^{q^{\ell+e}} - k^{q^e(1-q^{\ell(t+1)})} x^{q^{\ell(t+1)+e}}) 
+ x^{q^{\ell(t-1)+e}} + k^{q^e(1-q^{\ell(2t-1)})} x^{q^{\ell(2t-1)+e}},
\end{equation*}
with all exponents taken modulo $2t$.



Let 
\[
R = \Bigl\{\, e + f \;\bmod 2t \;\Big|\; e \in \{0, s, s(t+1), s(t-1), s(2t-1)\},\; f \in \{0, \ell, \ell(t+1), \ell(t-1), \ell(2t-1)\} \,\Bigr\}.
\]  
 Then for each $r \in R$, Formula \eqref{polequiv} gives one equation in the unknowns $(\alpha,\beta,\gamma,\delta)$, subject to $\alpha\delta - \beta\gamma \neq 0$.  

To analyze possible coincidences among the exponents in $R$, we arrange them in  a $5 \times 5$ table, where rows correspond to $f$ and columns to $e$. This makes it clear that there cannot be two equal exponents in the same row or column, so any equality between exponents can only occur between entries in different rows and columns.  

\renewcommand{\arraystretch}{1.3}
\setlength{\tabcolsep}{8pt}

$$
\begin{array}{|c|c|c|c|c|}
\hline
0 & \ell & \ell(t+1) & \ell(t-1) & \ell(2t-1) \\ \hline
s & \ell+s & \ell(t+1)+s & \ell(t-1)+s & \ell(2t-1)+s \\ \hline
s(t+1) & \ell+s(t+1) & \ell(t+1)+s(t+1) & \ell(t-1)+s(t+1) & \ell(2t-1)+s(t+1) \\ \hline
s(t-1) & \ell+s(t-1) & \ell(t+1)+s(t-1) & \ell(t-1)+s(t-1) & \ell(2t-1)+s(t-1) \\ \hline
s(2t-1) & \ell+s(2t-1) & \ell(t+1)+s(2t-1) & \ell(t-1)+s(2t-1) & \ell(2t-1)+s(2t-1) \\ \hline
\end{array}
$$

In the following the table of the correspondent coefficients.

\begin{center}
\renewcommand{\arraystretch}{1.4}
\setlength{\tabcolsep}{10pt}
\Large
\resizebox{\textwidth}{!}{$
\begin{array}{|c|c|c|c|c|}
\hline
-\gamma & -\delta\mu & \delta\mu k^{1-q^{\ell(t+1)}} & -\delta & -\delta k^{1-q^{\ell(2t-1)}} \\ \hline
m\alpha^\qs & m\beta^\qs\mu^\qs & -m\beta^\qs\mu^\qs k^{\qs-q^{\ell(t+1)+s}} & m\beta^\qs & m\beta^\qs k^{\qs-q^{\ell(2t-1)+s}}  \\ \hline
-mh^{1-q^{s(t+1)}}\alpha^{q^{s(t+1)}} & -mh^{1-q^{s(t+1)}}\beta^{q^{s(t+1)}}\mu^{q^{s(t+1)}} & mh^{1-q^{s(t+1)}}\beta^{q^{s(t+1)}}\mu^{q^{s(t+1)}}k^{q^{s(t+1)}-q^{\ell(t+1)+s(t+1)}} & -mh^{1-q^{s(t+1)}}\beta^{q^{s(t+1)}} & -mh^{1-q^{s(t+1)}}\beta^{q^{s(t+1)}}k^{q^{s(t+1)}-q^{\ell(2t-1)+s(t+1)}} \\ \hline
\alpha^{q^{s(t-1)}} &\beta^{q^{s(t-1)}}\mu^{q^{s(t-1)}} & -\beta^{q^{s(t-1)}}\mu^{q^{s(t-1)}}k^{q^{s(t-1)}-q^{\ell(t+1)+s(t-1)}} & \beta^{q^{s(t-1)}} & \beta^{q^{s(t-1)}}k^{q^{s(t-1)}-q^{\ell(2t-1)+s(t-1)}} \\ \hline
h^{1-q^{s(2t-1)}}\alpha^{q^{s(2t-1)}} & h^{1-q^{s(2t-1)}}\beta^{q^{s(2t-1)}}\mu^{q^{s(2t-1)}} & -h^{1-q^{s(2t-1)}}\beta^{q^{s(2t-1)}}\mu^{q^{s(2t-1)}}k^{q^{s(2t-1)}-q^{\ell(t+1)+s(2t-1)}} & h^{1-q^{s(2t-1)}}\beta^{q^{s(2t-1)}} & h^{1-q^{s(2t-1)}}\beta^{q^{s(2t-1)}}k^{q^{s(2t-1)}-q^{\ell(2t-1)+s(2t-1)}} \\ \hline
\end{array}$}
\end{center}

By comparing the exponent $0$ with all the others, the possible equalities (modulo $2t)$ reduce to the following cases:
\begin{enumerate}
    \item $0$ is different from all other exponents;
    \item $  \ell \equiv -s \pmod{2t}$;
    \item $\ell \equiv s \pmod{2t}$;
    \item $\ell  \equiv t-s \pmod{2t}$;
    \item $ \ell \equiv t+s \pmod{2t}$. 
    
\end{enumerate}

\paragraph{Case (a):} \emph{$0$ is different from all other exponents}.

From the coefficient of $x$, we obtain $\gamma = 0.$ Now, comparing $s$ and $\ell$, respectively,  with the other exponents and recalling that $\gcd(s,2t)=1=\gcd(\ell,2t)$, we get  that only the following two types of nontrivial equalities can occur:

\begin{itemize}
  \item $2s - \ell \equiv t \pmod{2t} \quad\text{or}\quad 2s + \ell \equiv t \pmod{2t}.$
  \item $ 2\ell - s \equiv t \pmod{2t} \quad\text{or}\quad 2\ell + s \equiv t \pmod{2t}.$
\end{itemize}

Moreover, these four possibilities are pairwise incompatible: a solution of the first type cannot satisfy a solution of the second type when $t\geq5$.  
Hence, either $s$ is different from all other exponents or $\ell$ is different from all other exponents. Therefore the corresponding coefficient in the matrix must vanish.  In particular,

\begin{itemize}
  \item if $s$ is different from all other exponents, then $\alpha = 0$;
  \item if $\ell$ is different from all other exponents, then $\delta= 0$.
\end{itemize}

In any case, $M$ is not invertible, a contradiction.

\paragraph{Case (b): $\ell + s \equiv 0 \pmod{2t}$.}

Under the hypothesis $\ell\equiv -s\pmod{2t}$, the exponents
$e+f$ can be arranged in the following $5\times5$ table (all entries reduced to
residues in $\{0,\dots,2t-1\}$):

\[
\begin{array}{|c|c|c|c|c|}
\hline
0            & s(2t-1)        & s(t-1)     & s(t+1)       & s     \\ \hline
s     & 0           & s t          & s(t+2)      & 2s     \\ \hline
s(t+1)         &s t           & 0          & 2s     & s(t+2)      \\ \hline
s(t-1)      & s(t-2)      & s(2t-2)      & 0            &s t            \\ \hline
s(2t-1)           & s(2t-2)       & s(t-2)     &s t            & 0            \\ \hline
\end{array}
\]

\medskip

Any  other equality between two different exponents reduces to a congruence of the form
\[
2s\equiv 0,\qquad 4s \equiv t,\qquad 2s \equiv t,\qquad s\equiv t \pmod{2t}.
\]
Since $\gcd(\ell,2t)=1$ and $t\geq 5$, this is a contradiction.

The system given by the coefficients of $x^{q^{2s}},x^{q^{s(t-2)}},x^{q^{s(t+2)}},x^{q^{s(2t-2)}},x^{q^{s t}}$ is

$$\begin{cases}
    -\beta^{q^{s(t-1)}}\mu^{q^{s(t-1)}}k^{q^{s(t-1)}-q^{\ell(t+1)+s(t-1)}}+h^{1-q^{s(2t-1)}}\beta^{q^{s(2t-1)}}\mu^{q^{s(2t-1)}}=0\\
\beta^{q^{s(t-1)}}\mu^{q^{s(t-1)}}-h^{1-q^{s(2t-1)}}\beta^{q^{s(2t-1)}}\mu^{q^{s(2t-1)}}k^{q^{s(2t-1)}-q^{\ell(t+1)+s(2t-1)}}=0\\
m\beta^\qs-mh^{1-q^{s(t+1)}}\beta^{q^{s(t+1)}}k^{q^{s(t+1)}-q^{\ell(2t-1)+s(t+1)}}=0\\
m\beta^\qs k^{\qs-q^{\ell(2t-1)+s}}-mh^{1-q^{s(t+1)}}\beta^{q^{s(t+1)}}=0\\
-m\beta^\qs\mu^\qs k^{\qs-q^{\ell(t+1)+s}}-mh^{1-q^{s(t+1)}}\beta^{q^{s(t+1)}}\mu^{q^{s(t+1)}}+\beta^{q^{s(t-1)}}k^{q^{s(t-1)}-q^{\ell(2t-1)+s(t-1)}}+h^{1-q^{s(2t-1)}}\beta^{q^{s(2t-1)}}=0,
\end{cases}$$

which in this case is equivalent to
$$
\begin{cases}
     \beta^{q^{s(2t-1)}}=\beta^{q^{s(t-1)}}k^{q^{s(t-1)}-q^{s(2t-2)}}h^{q^{s(2t-1)}-1}\\
     \beta^{q^{s(2t-1)}}=\beta^{q^{s(t-1)}}h^{q^{s(2t-1)}-1}k^{q^{s(t-2)}-q^{s(2t-1)}}\\
    \beta^{q^{s(t+1)}} =\beta^\qs h^{q^{s(t+1)}-1}k^{q^{s(t+2)}-q^{s(t+1)}}\\
    \beta^{q^{s(t+1)}}=\beta^\qs h^{q^{s(t+1)}-1} k^{\qs-q^{2s}}\\
    -m\beta^\qs\mu^\qs k^{\qs-q^{st}}-mh^{1-q^{s(t+1)}}\beta^{q^{s(t+1)}}\mu^{\qs}+\beta^{q^{s(t-1)}}k^{q^{s(t-1)}-q^{st}}+h^{1-q^{s(2t-1)}}\beta^{q^{s(2t-1)}}=0.
\end{cases}
$$
The first and second equations are equivalent to 
\[
\beta^{q^{st}} = \beta\, k^{1-q^{s(t-1)}}\, h^{q^{st}-q^{s(t+1)}}.
\] 
Similarly, the third and fourth equations are equivalent to 
\[
\beta^{q^{st}} = \beta\, k^{1-q^s}\, h^{q^{st}-q^{s(2t-1)}}.
\] 
If $\beta \neq 0$, we the wen have
\[
k^{1-q^{s(t-1)}}\, h^{q^{st}-q^{s(t+1)}} = k^{1-q^s}\, h^{q^{st}-q^{s(2t-1)}},
\] 
which can be rewritten as 
\[
(kh)^{q^{s(t-2)}-1} = 1.
\] 
Setting $\lambda := kh$, we have  $\lambda \in \mathbb{F}_{q^{t-2}} \cap \mathbb{F}_{q^{2t}} = \mathbb{F}_{q^{\gcd(2t,t-2)}}$ and 
\[
N:=\mathrm{N}_{q^{2t}/q^t}(\lambda)= \mathrm{N}_{q^{2t}/q^t}(h) \, \mathrm{N}_{q^{2t}/q^t}(k) = \pm 1.
\]
If $t$ is odd, $\lambda\in\F_q$, 
so $\lambda^2=N$. If $t\equiv0 \pmod{4}$, $\lambda\in\F_{q^2}$, and $\lambda^2=N$ again. Instead, if $t\equiv2 \pmod{4}$ then $\lambda\in\F_{q^4}$ and $\lambda^{q^2+1}=N$.
Let us replace $$\beta^{q^{s(2t-1)}}=\beta^{q^{s(t-1)}}k^{q^{s(t-1)}-q^{s(2t-2)}}h^{q^{s(2t-1)}-1} \,\,\textnormal{ and }\,\, \beta^{q^{s(t+1)}} =\beta^\qs h^{q^{s(t+1)}-1}k^{q^{s(t+2)}-q^{s(t+1)}}$$  in the fifth equation. Then,

$$m\beta^\qs\mu^\qs( k^{\qs-q^{st}}+k^{q^{s(t+2)}-q^{s(t+1)}})=\beta^{q^{s(t-1)}}(k^{q^{s(t-1)}-q^{st}}+k^{q^{s(t-1)}-q^{s(2t-2)}}).$$

So

$$m\mu^\qs( k^{\qs-q^{st}}+k^{-q^{2s}+q^{s}})=\beta^{q^{s(t-1)}-\qs}(k^{q^{s(t-1)}-q^{st}}+k^{q^{s(t-1)}-q^{s(2t-2)}}),$$ 

and we have

$$m\mu^\qs=\big(\beta^\qs k^{\qs}( k^{-q^{st}}+k^{-q^{2s}})\big)^{q^{s(t-2)}-1}.$$

Therefore,
\begin{eqnarray*}\big(\beta^\qs k^{\qs}( k^{-q^{st}}+k^{-q^{2s}})\big)^{q^{st}}&=&\beta^{q^{s(t+1)}} k^{q^{s(t+1)}}( k^{-1}+k^{-q^{s(t+2)}})\\&=&\beta^\qs h^{q^{s(t+1)}-1}k^{q^{s(t+2)}-q^{s(t+1)}}k^{q^{s(t+1)}}( k^{-1}+k^{-q^{s(t+2)}})\\&=&\beta^\qs \lambda^{q^{s(t+1)}-1}k^{1-q^{s(t+1)}}k^{q^{s(t+2)}}( k^{-1}+k^{-q^{s(t+2)}})\\&=&\beta^\qs \lambda^{q^{s(t+1)}-1}k^{-q^{s(t+1)}+1}k^{q^{s(t+2)}}( k^{-1}+k^{-q^{s(t+2)}})\\&=&\beta^\qs \lambda^{q^{3s}-1}k^{-q^{s(t+1)}}( k+k^{q^{s(t+2)}})\\&=&\beta^\qs \lambda^{q^{3s}-1}k^{\qs}( k^{-q^{st}}+k^{-q^{2s}}).
\end{eqnarray*}

and, hence, $m\mu^\qs=z^{q^{s(t-2)}-1}$ with $z^{q^{st}}=\lambda^{q^{3s}-1}z$.

If $\beta=0$, then $\gamma=0$. Hence, $\alpha\delta\neq0$ and consider the system given by the coefficients of $x^\qs,x^{q^{s(t+1)}},x^{q^{s(t-1)}},x^{q^{s(2t-1)}}$

$$\begin{cases}
    -\delta\mu+h^{1-q^{s(2t-1)}}\alpha^{q^{s(2t-1)}}=0\\
\delta\mu k^{1-q^{\ell(t+1)}}+\alpha^{q^{s(t-1)}}=0\\
 \delta +mh^{1-q^{s(t+1)}}\alpha^{q^{s(t+1)}}=0\\
 -\delta k^{1-q^{\ell(2t-1)}} +m\alpha^\qs=0,
\end{cases}$$
 which in this case is equivalent to

 $$\begin{cases}
    -\delta\mu+h^{1-q^{s(2t-1)}}\alpha^{q^{s(2t-1)}}=0\\
\delta\mu k^{1-q^{s(t-1)}}+\alpha^{q^{s(t-1)}}=0\\
 \delta= -mh^{1-q^{s(t+1)}}\alpha^{q^{s(t+1)}}\\
 -\delta k^{1-q^{s}} +m\alpha^\qs=0.
\end{cases}$$
Let us substitute $\delta$ from the third equation into the others, then we obtain
 $$\begin{cases}
    mh^{1-q^{s(t+1)}}\alpha^{q^{s(t+1)}}\mu+h^{1-q^{s(2t-1)}}\alpha^{q^{s(2t-1)}}=0\\
-mh^{1-q^{s(t+1)}}\alpha^{q^{s(t+1)}}\mu k^{1-q^{s(t-1)}}+\alpha^{q^{s(t-1)}}=0\\
 \delta= -mh^{1-q^{s(t+1)}}\alpha^{q^{s(t+1)}}\\
 h^{1-q^{s(t+1)}}\alpha^{q^{s(t+1)}} k^{1-q^{s}} +\alpha^\qs=0.
\end{cases}$$
From the first two equations, $\alpha^{q^{s(2t-1)}}=-\alpha^{q^{s(t-1)}}k^{q^{s(t-1)}-1}h^{q^{s(2t-1)}-1}$ and hence $\alpha^{q^{st}}=-\alpha k^{1-q^{s(t+1)}}h^{q^{st}-q^{s(t+1)}}$.\\
From the fourth equation,$\alpha^{q^{s(t+1)}}=-\alpha^\qs h^{q^{s(t+1)}-1} k^{q^{s}-1}$, that is $\alpha^{q^{st}}=-\alpha h^{q^{st}-q^{s(2t-1)}} k^{1-q^{s(2t-1)}}$. Then,
$$k^{1-q^{s(t+1)}}h^{q^{st}-q^{s(t+1)}}=h^{q^{st}-q^{s(2t-1)}} k^{1-q^{s(2t-1)}}$$
that reads again as $(kh)^{q^{s(t-2)}-1}=1$. We have that $\lambda:=kh$ has the same properties as before.

Now from the second equation we obtain 

\begin{eqnarray*}m\mu&=&\alpha^{q^{s(t-1)}}h^{q^{s(t+1)}-1} k^{q^{s(t-1)}-1}\alpha^{-q^{s(t+1)}}\\&=&-\alpha^{q^{s(t-1)}-\qs}h^{q^{s(t+1)}-1} k^{q^{s(t-1)}-1}h^{1-q^{s(t+1)}} k^{1-q^{s}}\\&=&-\alpha^{q^{s(t-1)}-\qs}h^{q^{s(t+1)}-1} k^{q^{s(t-1)}-1}h^{1-q^{s(t+1)}} k^{1-q^{s}}\\&=&-\alpha^{q^{s(t-1)}-\qs} k^{q^{s(t-1)}-\qs}\\ &=&-\big(\alpha^\qs k^\qs)^{q^{s(t-2)}-1}.
\end{eqnarray*}
Moreover,
\begin{eqnarray*}
  (\alpha^\qs k^\qs)^{q^{st}}&=& -\alpha^\qs h^{q^{s(t+1)}-1} k^{q^{s}-1}k^{q^{s(t+1)}}\\&=&
   -\alpha^\qs \lambda^{q^{s(t+1)}-1}k^{-q^{s(t+1)}+1} k^{q^{s}-1}k^{q^{s(t+1)}}\\&=&-\alpha^\qs\lambda^{q^{3s}-1}k^\qs.
\end{eqnarray*}

So $m\mu=-z^{q^{s(t-2)}-1}$ with $z^{q^{st}}=-\lambda^{q^{3s}-1}z$.

\paragraph{Case (c): $\ell \equiv s  \pmod{2t}$.}

Under the hypothesis $\ell \equiv s \pmod{2t}$ (and $\gcd(s,2t)=\gcd(\ell,2t)=1$), the exponents
$e+f $ modulo $2t$  can be arranged in the following $5\times5$ table:

\[
\begin{array}{|c|c|c|c|c|}
\hline
0            & s        & s(t+1)     & s(t-1)       & s(2t-1)      \\ \hline
s      & 2s           & s(t+2)          & st      & 0     \\ \hline
s(t+1)       &s(t+2)           & 2s          & 0     & st      \\ \hline
s(t-1)      & st    & 0      & s(2t-2)            &s(t-2)            \\ \hline
s(2t-1)         & 0       & st    &s(t-2)            & s(2t-2)            \\ \hline
\end{array}
\]

The system given by the coefficients of $x^{q^{2s}},x^{q^{s(t+2)}},x^{q^{s(t-2)}},x^{q^{s(2t-2)}},x^{q^{s t}}$ is

$$\begin{cases}
     m\beta^\qs\mu^\qs+mh^{1-q^{s(t+1)}}\beta^{q^{s(t+1)}}\mu^{q^{s(t+1)}}k^{q^{s(t+1)}-q^{\ell(t+1)+s(t+1)}}=0\\
 -m\beta^\qs\mu^\qs k^{\qs-q^{\ell(t+1)+s}}-mh^{1-q^{s(t+1)}}\beta^{q^{s(t+1)}}\mu^{q^{s(t+1)}}=0\\
 \beta^{q^{s(t-1)}}+ h^{1-q^{s(2t-1)}}\beta^{q^{s(2t-1)}}k^{q^{s(2t-1)}-q^{\ell(2t-1)+s(2t-1)}}=0\\
 \beta^{q^{s(t-1)}}k^{q^{s(t-1)}-q^{\ell(2t-1)+s(t-1)}}+ h^{1-q^{s(2t-1)}}\beta^{q^{s(2t-1)}}=0\\
 m\beta^\qs-mh^{1-q^{s(t+1)}}\beta^{q^{s(t+1)}}k^{q^{s(t+1)}-q^{\ell(2t-1)+s(t+1)}}+\\+\beta^{q^{s(t-1)}}\mu^{q^{s(t-1)}}-h^{1-q^{s(2t-1)}}\beta^{q^{s(2t-1)}}\mu^{q^{s(2t-1)}}k^{q^{s(2t-1)}-q^{\ell(t+1)+s(2t-1)}}=0,
\end{cases}$$

which in this case is equivalent to
$$\begin{cases} \label{thisystem}
   \beta^{q^{s(t+1)}}= 
   -\beta^\qs h^{q^{s(t+1)}-1}k^{q^{2s}-q^{s(t+1)}}\\
  \beta^{q^{s(t+1)}}=-\beta^\qs k^{\qs-q^{s(t+2)}} h^{q^{s(t+1)}-1}\\
  \beta^{q^{s(2t-1)}}=-\beta^{q^{s(t-1)}}h^{q^{s(2t-1)}-1}k^{q^{s(2t-2)}-q^{s(2t-1)}}\\
  \beta^{q^{s(2t-1)}}=-\beta^{q^{s(t-1)}}k^{q^{s(t-1)}-q^{s(t-2)}} h^{q^{s(2t-1)}-1}\\
  m\beta^\qs-mh^{1-q^{s(t+1)}}\beta^{q^{s(t+1)}}k^{q^{s(t+1)}-q^{st}}+\beta^{q^{s(t-1)}}\mu^{q^{s(t-1)}}-h^{1-q^{s(2t-1)}}\beta^{q^{s(2t-1)}}\mu^{q^{s(t-1)}}k^{q^{s(2t-1)}-q^{st}}=0.
\end{cases}$$

The first and second equations are equivalent to 
\[
\beta^{q^{st}} = -\beta\, h^{q^{st}-q^{s(2t-1)}}\, k^{q^s - q^{st}}.
\] 
Similarly, the third and fourth equations are equivalent to 
\[
\beta^{q^{st}} = -\beta\, h^{q^{st}-q^{s(t+1)}}\, k^{q^{s(t-1)} - q^{st}}.
\] 
If $\beta \neq 0$, we then obtain
\[
h^{q^{st}-q^{s(2t-1)}}\, k^{q^s - q^{st}} = h^{q^{st}-q^{s(t+1)}}\, k^{q^{s(t-1)} - q^{st}},
\] 
which can be rewritten as 
\[
\left(\frac{h}{k}\right)^{q^{s(t-2)} - 1} = 1.
\] 
Setting $\nu := h/k$, we see that $\nu$ has the same properties as $\lambda$ described as in \textbf{Case (b)}.

Let us replace 
$$\beta^{q^{s(2t-1)}}=-\beta^{q^{s(t-1)}}h^{q^{s(2t-1)}-1}k^{q^{s(2t-2)}-q^{s(2t-1)}} \,\, \textnormal{ and } \,\, \beta^{q^{s(t+1)}}= 
   -\beta^\qs h^{q^{s(t+1)}-1}k^{q^{2s}-q^{s(t+1)}}$$  in the fifth equation of System \eqref{thisystem}.

The,

$$mk^{q^{2s}}(k^{-q^{2s}}+k^{-q^{st}})=-\beta^{q^{s(t-1)}-\qs}\mu^{q^{s(t-1)}}k^{q^{s(2t-2)}}(k^{-q^{s(2t-2)}}+k^{-q^{st}}),$$ that is equivalent to
$$m\mu^{-q^{s(t-1)}}=-\big(\beta^\qs k^{q^{st}+q^{2s}}(k^{-q^{2s}}+k^{-q^{st}})\big)^{q^{s(t-2)}-1}=-\big(\beta^\qs(k^{q^{2s}}+k^{q^{st}})\big)^{q^{s(t-2)}-1}.$$

Hence, we have \begin{eqnarray*}
   \big(\beta^\qs(k^{q^{2s}}+k^{q^{st}})\big)^{q^{st}}&=&\beta^{q^{s(t+1)}}(k^{q^{s(t+2)}}+k)\\&=&-\beta^\qs h^{q^{s(t+1)}-1}k^{q^{2s}-q^{s(t+1)}}(k^{q^{s(t+2)}}+k)\\&=&-\beta^\qs\nu^{q^{s(t+1)}-1}k^{q^{s(t+1)}-1}k^{q^{2s}-q^{s(t+1)}}(k^{q^{s(t+2)}}+k)\\&=&-\beta^\qs\nu^{q^{3s}-1}k^{q^{2s}-1}(k^{q^{s(t+2)}}+k)\\&=&-\beta^\qs\nu^{q^{3s}-1}(k^{q^{2s}}+k^{q^{st}}).
\end{eqnarray*}
This implies that $m\mu^{-q^{s(t-1)}}=-z^{q^{s(t-2)}-1}$ with $z^{q^{st}}=-\nu^{q^{3s}-1}z$.

If $\beta=0$, then $\gamma=0$. Hence, $\alpha\delta\neq0$ and consider the system given by the coefficients of $x^\qs,x^{q^{s(t+1)}},x^{q^{s(t-1)}},x^{q^{s(2t-1)}}$
$$\begin{cases}
    -\delta\mu+m\alpha^\qs=0\\
\delta\mu k^{1-q^{\ell(t+1)}}-mh^{1-q^{s(t+1)}}\alpha^{q^{s(t+1)}}=0\\
 -\delta+\alpha^{q^{s(t-1)}}=0\\
 -\delta k^{1-q^{\ell(2t-1)}}+h^{1-q^{s(2t-1)}}\alpha^{q^{s(2t-1)}} =0,
\end{cases}$$
which is equivalent to

 $$\begin{cases}
   -\delta\mu+m\alpha^\qs=0\\
\delta\mu k^{1-q^{s(t+1)}}-mh^{1-q^{s(t+1)}}\alpha^{q^{s(t+1)}}=0\\
\delta=\alpha^{q^{s(t-1)}}\\
-\delta k^{1-q^{s(2t-1)}}+h^{1-q^{s(2t-1)}}\alpha^{q^{s(2t-1)}} =0.
\end{cases}$$
Let us substitute $\delta$ from the third equation into the others, then we obtain
 $$\begin{cases} \label{systemcasec}
     -\alpha^{q^{s(t-1)}}\mu+m\alpha^\qs=0\\
\alpha^{q^{s(t-1)}}\mu k^{1-q^{s(t+1)}}-mh^{1-q^{s(t+1)}}\alpha^{q^{s(t+1)}}=0\\
\delta=\alpha^{q^{s(t-1)}}\\
-\alpha^{q^{s(t-1)}} k^{1-q^{s(2t-1)}}+h^{1-q^{s(2t-1)}}\alpha^{q^{s(2t-1)}} =0.
\end{cases}$$

From the first two equations, we obtain
\[
\alpha^{q^{s(t+1)}} = \alpha^\qs\, k^{1-q^{s(t+1)}}\, h^{q^{s(t+1)}-1},
\]
which can be rewritten as
\[
\alpha^{q^{st}} = \alpha\, k^{q^{s(2t-1)}-q^{st}}\, h^{q^{st}-q^{s(2t-1)}}.
\]
Similarly, from the fourth equation, we obtain
\[
\alpha^{q^{s(2t-1)}} = \alpha^{q^{s(t-1)}}\, h^{q^{s(2t-1)}-1}\, k^{1-q^{s(2t-1)}},
\]
and hence
\[
\alpha^{q^{st}} = \alpha\, h^{q^{st}-q^{s(t+1)}}\, k^{q^{s(t+1)}-q^{st}}.
\]
Comparing these expressions, we then obtain
\[
k^{q^{s(2t-1)}-q^{st}}\, h^{q^{st}-q^{s(2t-1)}} = h^{q^{st}-q^{s(t+1)}}\, k^{q^{s(t+1)}-q^{st}},
\]
which can be rewritten as 
\[
\left(\frac{h}{k}\right)^{q^{s(t-2)}-1} = 1.
\]
Hence, setting $\nu := h/k$, we see that $\nu$ has the same properties as before.

Now from the second equation of System \eqref{systemcasec}, we obtain

\begin{eqnarray*}
    m/\mu&=&\alpha^{q^{s(t-1)}} k^{1-q^{s(t+1)}}h^{q^{s(t+1)}-1}\alpha^{-q^{s(t+1)}}\\&=&\alpha^{q^{s(t-1)}-\qs} k^{1-q^{s(t+1)}}h^{q^{s(t+1)}-1} k^{q^{s(t+1)-1}}h^{1-q^{s(t+1)}}\\&=&(\alpha^\qs)^{q^{s(t-2)}-1}.
\end{eqnarray*}
Finally, we have that $(\alpha^{q^s})^{q^{st}}=\alpha^\qs k^{1-q^{s(t+1)}}h^{q^{s(t+1)}-1}=\alpha^{q^s}\nu^{q^{3s}-1}$, so $m/\mu=z^{q^{s(t-2)}-1}$ with $z^{q^{st}}=\nu^{q^{3s}-1}z$.

\paragraph{Case (d): $\ell \equiv t-s \pmod{2t} $.}

Note that this case is possible only when $t$ is even.
Under the hypothesis $\ell \equiv t-s \pmod{2t} $, $t^2 \equiv 0 \pmod{2t}$ and the exponents
$e+f$ (modulo $2t$) can be arranged in the following $5\times5$ table:
\[
\begin{array}{|c|c|c|c|c|}
\hline
0            & s(t-1)        & s(2t-1)     & s       & s(t+1)     \\ \hline
s      & st           & 0          & 2s      & s(t+2)     \\ \hline
s(t+1)       &0          & st          & s(t+2)     & 2s      \\ \hline
s(t-1)      & s(2t-2)    & 2(t-2)      & st            &0            \\ \hline
s(2t-1)         & s(t-2)       & s(2t-2)    &0            & st            \\ \hline
\end{array}
\]

The system given by the coefficients of $x^{q^{2s}},x^{q^{s(t-2)}},x^{q^{s(t+2)}},x^{q^{s(2t-2)}},x^{q^{s t}}$ is

$$\begin{cases}
    -\beta^{q^{s(t-1)}}\mu^{q^{s(t-1)}}k^{q^{s(t-1)}-q^{\ell(t+1)+s(t-1)}}+h^{1-q^{s(2t-1)}}\beta^{q^{s(2t-1)}}\mu^{q^{s(2t-1)}}=0\\
\beta^{q^{s(t-1)}}\mu^{q^{s(t-1)}}-h^{1-q^{s(2t-1)}}\beta^{q^{s(2t-1)}}\mu^{q^{s(2t-1)}}k^{q^{s(2t-1)}-q^{\ell(t+1)+s(2t-1)}}=0\\
m\beta^\qs-mh^{1-q^{s(t+1)}}\beta^{q^{s(t+1)}}k^{q^{s(t+1)}-q^{\ell(2t-1)+s(t+1)}}=0\\
m\beta^\qs k^{\qs-q^{\ell(2t-1)+s}}-mh^{1-q^{s(t+1)}}\beta^{q^{s(t+1)}}=0\\
 m\beta^\qs\mu^\qs+mh^{1-q^{s(t+1)}}\beta^{q^{s(t+1)}}\mu^{q^{s(t+1)}}k^{q^{s(t+1)}-q^{\ell(t+1)+s(t+1)}}+\\+\beta^{q^{s(t-1)}}+h^{1-q^{s(2t-1)}}\beta^{q^{s(2t-1)}}k^{q^{s(2t-1)}-q^{\ell(2t-1)+s(2t-1)}}=0,
\end{cases}$$

which in this case is equivalent to
$$
\begin{cases}\label{case(d)}
     \beta^{q^{s(2t-1)}}=\beta^{q^{s(t-1)}}k^{q^{s(t-1)}-q^{s(t-2)}}h^{q^{s(2t-1)}-1}\\
     \beta^{q^{s(2t-1)}}=\beta^{q^{s(t-1)}}h^{q^{s(2t-1)}-1}k^{q^{s(2t-2)}-q^{s(2t-1)}}\\
    \beta^{q^{s(t+1)}} =\beta^\qs h^{q^{s(t+1)}-1}k^{q^{2s}-q^{s(t+1)}}\\
    \beta^{q^{s(t+1)}}=\beta^\qs h^{q^{s(t+1)}-1} k^{\qs-q^{s(t+2)}}\\
    m\beta^\qs\mu^\qs+mh^{1-q^{s(t+1)}}\beta^{q^{s(t+1)}}\mu^{q^{s}}k^{q^{s(t+1)}-q^{st}}+\beta^{q^{s(t-1)}}+h^{1-q^{s(2t-1)}}\beta^{q^{s(2t-1)}}k^{q^{s(2t-1)}-q^{st}}=0.
\end{cases}
$$
The first and second equation of the system above are equivalent to $\beta^{q^{st}}=\beta k^{1-q^{s(2t-1)}}h^{q^{st}-q^{s(t+1)}}$. On the other hand, the third and fourth equations are equivalent to $\beta^{q^{st}}=\beta k^{1-q^{s(t+1)}}h^{q^{st}-q^{s(2t-1)}}$. If $\beta\neq0$, we have $$k^{1-q^{s(2t-1)}}h^{q^{st}-q^{s(t+1)}}= k^{1-q^{s(t+1)}}h^{q^{st}-q^{s(2t-1)}},$$ that reads as $(h/k)^{q^{s(t-2)}-1}=1$. So, $\nu:=h/k$ has the same properties as before.

Let us replace $$\beta^{q^{s(2t-1)}}=\beta^{q^{s(t-1)}}k^{q^{s(t-1)}-q^{s(t-2)}}h^{q^{s(2t-1)}-1} \,\, \textnormal{ and } \,\, \beta^{q^{s(t+1)}} =\beta^\qs h^{q^{s(t+1)}-1}k^{q^{2s}-q^{s(t+1)}}$$ in the fifth equation of System \eqref{case(d)}. Then,

$$m\mu^\qs\beta^\qs(k^{-1}+k^{-q^{s(t+2)}})=-\beta^{q^{s(t-1)}}(k^{-1}+k^{-q^{s(t-2)}}),$$ that it is equivalent to

$$m\mu^\qs=-\big(\beta^\qs (k^{-1}+k^{-q^{s(t+2)}})\big)^{q^{s(t-2)}-1}.$$

Hence, we have 
\begin{eqnarray*}\big(\beta^\qs (k^{-1}+k^{-q^{s(t+2)}})\big)^{q^{st}}&=&\beta^{q^{s(t+1)}}(k^{-q^{st}}+k^{-q^{2s}})\\&=&\beta^\qs h^{q^{s(t+1)}-1}k^{q^{2s}-q^{s(t+1)}}(k^{-q^{st}}+k^{-q^{2s}})\\&=&\beta^\qs\nu^{q^{s(t+1)}-1} k^{q^{s(t+1)}-1}k^{q^{2s}-q^{s(t+1)}}(k^{-q^{st}}+k^{-q^{2s}})\\&=&\beta^\qs\nu^{q^{s(t+1)}-1}k^{q^{2s}-1}(k^{-q^{st}}+k^{-q^{2s}})\\&=&\beta^\qs\nu^{q^{s(t+1)}-1}k^{-q^{s(t+2)}-1}(k+k^{q^{s(t+2)}})\\&=&\beta^\qs\nu^{q^{3s}-1} (k^{-1}+k^{-q^{s(t+2)}}).
\end{eqnarray*}

We get that $m\mu^\qs=-z^{q^{s(t-2)}-1}$ with $z^{q^{st}}=\nu^{q^{3s}-1}z$.

If $\beta=0$, then $\gamma=0$. Hence, $\alpha\delta\neq0$ and consider the system given by the coefficients of $x^\qs,x^{q^{s(t+1)}},x^{q^{s(t-1)}},x^{q^{s(2t-1)}}$

$$\begin{cases}
    -\delta\mu+\alpha^{q^{s(t-1)}}=0\\
\delta\mu k^{1-q^{\ell(t+1)}}+h^{1-q^{s(2t-1)}}\alpha^{q^{s(2t-1)}}=0\\
 -\delta+m\alpha^\qs =0\\
 -\delta k^{1-q^{\ell(2t-1)}} -mh^{1-q^{s(t+1)}}\alpha^{q^{s(t+1)}}=0,
\end{cases}$$
 which in this case is equivalent to

 $$\begin{cases}
    -\delta\mu+\alpha^{q^{s(t-1)}}=0\\
\delta\mu k^{1-q^{s(2t-1)}}+h^{1-q^{s(2t-1)}}\alpha^{q^{s(2t-1)}}=0\\
 \delta=m\alpha^\qs \\
 -\delta k^{1-q^{s(t+1)}} -mh^{1-q^{s(t+1)}}\alpha^{q^{s(t+1)}}=0.
\end{cases}$$
Let us substitute $\delta$ from the third equation into the others of the system above. Then, we obtain
$$\begin{cases}\label{case(d)1}
    -m\alpha^\qs\mu+\alpha^{q^{s(t-1)}}=0\\
m\alpha^\qs\mu k^{1-q^{s(2t-1)}}+h^{1-q^{s(2t-1)}}\alpha^{q^{s(2t-1)}}=0\\
 \delta=m\alpha^\qs \\
 \alpha^\qs k^{1-q^{s(t+1)}}+ h^{1-q^{s(t+1)}}\alpha^{q^{s(t+1)}}=0,
\end{cases}$$
From the first two equations of the system above,
\[
\alpha^{q^{s(2t-1)}} = -\alpha^{q^{s(t-1)}}\, k^{1-q^{s(2t-1)}}\, h^{q^{s(2t-1)}-1},
\]
which can be rewritten as
\[
\alpha^{q^{st}} = -\alpha\, k^{q^{s(t+1)}-q^{st}}\, h^{q^{st}-q^{s(t+1)}}.
\]
Similarly, from the fourth equation, 
\[
\alpha^{q^{s(t+1)}} = -\alpha^\qs\, h^{q^{s(t+1)}-1}\, k^{1-q^{s(t+1)}},
\]
which can be rewritten as
\[
\alpha^{q^{st}} = -\alpha\, h^{q^{st}-q^{s(2t-1)}}\, k^{q^{s(2t-1)}-q^{st}}.
\]
Comparing these two expressions, then
\[
k^{q^{s(t+1)}-q^{st}}\, h^{q^{st}-q^{s(t+1)}} = h^{q^{st}-q^{s(2t-1)}}\, k^{q^{s(2t-1)}-q^{st}},
\]
which can be rewritten as
\[
\left(\frac{h}{k}\right)^{q^{s(t-2)}-1} = 1.
\]
Setting $\nu := h/k$, we see that $\nu$ satisfies the same properties as before.

Now from the second equation of the System \eqref{case(d)1}, we have
\begin{eqnarray*}m\mu&=&-h^{1-q^{s(2t-1)}}\alpha^{q^{s(2t-1)}} k^{q^{s(2t-1)}-1}\alpha^{-\qs}\\&=&h^{1-q^{s(2t-1)}}\alpha^{q^{s(t-1)}}k^{1-q^{s(2t-1)}}h^{q^{s(2t-1)}-1} k^{q^{s(2t-1)}-1}\alpha^{-\qs}\\&=&\big(\alpha^\qs\big)^{q^{s(t-2)}-1},
\end{eqnarray*}
and, hence,
\begin{eqnarray*}
  (\alpha^\qs )^{q^{st}}= -\alpha^\qs h^{q^{s(t+1)}-1} k^{1-q^{s(t+1)}}=-\alpha^\qs\nu^{q^{3s}-1}.
\end{eqnarray*}

So, $m\mu=z^{q^{s(t-2)}-1}$ with $z^{q^{st}}=-\nu^{q^{3s}-1}z$.

\paragraph{Case (e): $\ell \equiv t+s \pmod{2t}$.}

Note that this case is possible only when $t$ is even.
Under the hypothesis $\ell \equiv t+s \pmod{2t} $,  $t^2\equiv0\pmod{2t}$ and  the exponents
$e+f \pmod{2t}$ can arranged in the following $5\times5$ table:

\[
\begin{array}{|c|c|c|c|c|}
\hline
0                   & s(t+1) &s          & s(2t-1)   & s(t-1)    \\ \hline
s                & s(t+2)    & 2s             & 0  & st   \\ \hline
s(t+1)                & 2s      &s(t+2)          & st  & 0     \\ \hline
s(t-1)        & 0   & st          &s(t-2)     & s(2t-2)              \\ \hline
s(2t-1)             & st  & 0      & s(2t-2) & s(t-2)               \\ \hline
\end{array}
\]

The system given by the coefficients of $x^{q^{2s}},x^{q^{s(t-2)}},x^{q^{s(t+2)}},x^{q^{s(2t-2)}},x^{q^{s t}}$ is

$$\begin{cases}
     m\beta^\qs\mu^\qs+mh^{1-q^{s(t+1)}}\beta^{q^{s(t+1)}}\mu^{q^{s(t+1)}}k^{q^{s(t+1)}-q^{\ell(t+1)+s(t+1)}}=0\\
 -m\beta^\qs\mu^\qs k^{\qs-q^{\ell(t+1)+s}}-mh^{1-q^{s(t+1)}}\beta^{q^{s(t+1)}}\mu^{q^{s(t+1)}}=0\\
 \beta^{q^{s(t-1)}}+ h^{1-q^{s(2t-1)}}\beta^{q^{s(2t-1)}}k^{q^{s(2t-1)}-q^{\ell(2t-1)+s(2t-1)}}=0\\
 \beta^{q^{s(t-1)}}k^{q^{s(t-1)}-q^{\ell(2t-1)+s(t-1)}}+ h^{1-q^{s(2t-1)}}\beta^{q^{s(2t-1)}}=0\\
m\beta^\qs k^{\qs-q^{\ell(2t-1)+s}} -mh^{1-q^{s(t+1)}}\beta^{q^{s(t+1)}}+\\-\beta^{q^{s(t-1)}}\mu^{q^{s(t-1)}}k^{q^{s(t-1)}-q^{\ell(t+1)+s(t-1)}}
 +h^{1-q^{s(2t-1)}}\beta^{q^{s(2t-1)}}\mu^{q^{s(2t-1)}}=0,
\end{cases}$$

which in this case is equivalent to
$$\begin{cases}\label{casee}
   \beta^{q^{s(t+1)}}= 
   -\beta^\qs h^{q^{s(t+1)}-1}k^{q^{s(t+2)}-q^{s(t+1)}}\\
  \beta^{q^{s(t+1)}}=-\beta^\qs k^{\qs-q^{2s}} h^{q^{s(t+1)}-1}\\
  \beta^{q^{s(2t-1)}}=-\beta^{q^{s(t-1)}}h^{q^{s(2t-1)}-1}k^{q^{s(t-2)}-q^{s(2t-1)}}\\
  \beta^{q^{s(2t-2)}}=-\beta^{q^{s(t-1)}}k^{q^{s(t-1)}-q^{s(2t-2)}} h^{q^{s(2t-1)}-1}\\
 m\beta^\qs k^{\qs-q^{st}} -mh^{1-q^{s(t+1)}}\beta^{q^{s(t+1)}}+\\-\beta^{q^{s(t-1)}}\mu^{q^{s(t-1)}}k^{q^{s(t-1)}-q^{st}}
 +h^{1-q^{s(2t-1)}}\beta^{q^{s(2t-1)}}\mu^{q^{s(t-1)}}=0.
\end{cases}$$

The first and second equations of the system above are equivalent to $\beta^{q^{st}}=-\beta h^{q^{st}-q^{s(2t-1)}}k^{q^{s(t+1)}-q^{st}}$. On the other hand, the third and fourth equations are equivalent to $\beta^{q^{st}}=-\beta h^{q^{st}-q^{s(t+1)}}k^{q^{s(2t-1)}-q^{st}}$.\\
So, if $\beta\neq0$, we obtain 
$$h^{q^{st}-q^{s(2t-1)}}k^{q^{s(t+1)}-q^{st}}=h^{q^{st}-q^{s(t+1)}}k^{q^{s(2t-1)}-q^{st}}$$ 
that is equivalent to $(hk)^{q^{s(t-2)}-1}=1$.
Then, $\lambda=hk$ that has the same properties as before.

Let us replace $$\beta^{q^{s(2t-1)}}=-\beta^{q^{s(t-1)}}h^{q^{s(2t-1)}-1}k^{q^{s(t-2)}-q^{s(2t-1)}} \,\, \textnormal{ and }\,\, \beta^{q^{s(t+1)}}= 
    -\beta^\qs h^{q^{s(t+1)}-1}k^{q^{s(t+2)}-q^{s(t+1)}}$$  in the fifth equation of System \eqref{casee}

$$m\beta^\qs( k^{\qs-q^{st}}+k^{q^{s(t+2)}-q^{s(t+1)}})-\beta^{q^{s(t-1)}}\mu^{q^{s(t-1)}}(k^{q^{s(t-1)}-q^{st}}
 +k^{q^{s(t-2)}-q^{s(2t-1)}})=0.$$

Then,

$$m\beta^\qs k^\qs( k+k^{q^{s(t+2)}})-\beta^{q^{s(t-1)}}\mu^{q^{s(t-1)}}k^{q^{s(t-1)}}(k
 +k^{q^{s(t-2)}})=0,$$ 
and hence,
$$m\mu^{-q^{s(t-1)}}=\beta^{q^{s(t-1)}-\qs}k^{q^{s(t-1)}-\qs}(k
 +k^{q^{s(t-2)}})/( k+k^{q^{s(t+2)}})=\big(\beta^\qs k^\qs (k+k^{q^{s(t+2)}})\big)^{q^{s(t-2)}-1}.$$

Again, \begin{eqnarray*}
   \big(\beta^\qs k^\qs (k+k^{q^{s(t+2)}})\big)^{q^{st}}&=&\beta^{q^{s(t+1)}}k^{q^{s(t+1)}}(k^{q^{st}}+k^{q^{2s}})\\&=&-\beta^\qs h^{q^{s(t+1)}-1}k^{q^{s(t+2)}-q^{s(t+1)}}k^{q^{s(t+1)}}(k^{q^{st}}+k^{q^{2s}})\\&=&-\beta^\qs k^{1-q^{s(t+1)}}\lambda^{q^{3s}-1}k^{q^{s(t+2)}}(k^{q^{st}}+k^{q^{2s}})\\&=&-\beta^\qs k^{1+\qs}\lambda^{q^{3s}-1}k^{q^{s(t+2)}}(k^{-1}+k^{-q^{s(t+2)}})\\&=&-\beta^\qs k^{\qs}\lambda^{q^{3s}-1}(k+k^{q^{s(t+2)}}),
\end{eqnarray*}
and $m\mu^{-q^{s(t-1)}}=z^{q^{s(t-2)}-1}$ with $z^{q^{st}}=-\lambda^{q^{3s}-1}z$.

If $\beta=0$, then $\gamma=0$. Hence, $\alpha\delta\ne 0$ and consider the system given by the coefficients of $x^\qs,x^{q^{s(t+1)}},x^{q^{s(t-1)}},x^{q^{s(2t-1)}}$
$$\begin{cases}
    -\delta\mu-mh^{1-q^{s(t+1)}}\alpha^{q^{s(t+1)}}=0\\
\delta\mu k^{1-q^{\ell(t+1)}}+m\alpha^\qs=0\\
 -\delta+h^{1-q^{s(2t-1)}}\alpha^{q^{s(2t-1)}}=0\\
 -\delta k^{1-q^{\ell(2t-1)}}+\alpha^{q^{s(t-1)}} =0,
\end{cases}$$
which in this case is equivalent to

 $$\begin{cases}\label{casee1}
    -\delta\mu-mh^{1-q^{s(t+1)}}\alpha^{q^{s(t+1)}}=0\\
\delta\mu k^{1-q^{s}}+m\alpha^\qs=0\\
 \delta=h^{1-q^{s(2t-1)}}\alpha^{q^{s(2t-1)}}\\
 -\delta k^{1-q^{s(t-1)}}+\alpha^{q^{s(t-1)}} =0.
\end{cases}$$
Let us substitute $\delta$ from the third equation into the others of the system above. Then, we obtain
 $$\begin{cases}
    -h^{1-q^{s(2t-1)}}\alpha^{q^{s(2t-1)}}\mu-mh^{1-q^{s(t+1)}}\alpha^{q^{s(t+1)}}=0\\
h^{1-q^{s(2t-1)}}\alpha^{q^{s(2t-1)}}\mu k^{1-q^{s}}+m\alpha^\qs=0\\
 \delta=h^{1-q^{s(2t-1)}}\alpha^{q^{s(2t-1)}}\\
 -h^{1-q^{s(2t-1)}}\alpha^{q^{s(2t-1)}} k^{1-q^{s(t-1)}}+\alpha^{q^{s(t-1)}} =0.
\end{cases}$$

From the first two equations of the system aboec, we obtain $\alpha^{q^{s(t+1)}}=\alpha^\qs k^{\qs-1}h^{q^{s(t+1)}-1}$
 that is equivalent to $$\alpha^{q^{st}}=\alpha k^{1-q^{s(2t-1)}}h^{q^{st}-q^{s(2t-1)}}.$$ On the other hand, from the fourth equation, we obtain $\alpha^{q^{s(2t-1)}}=\alpha^{q^{s(t-1)}} h^{q^{s(2t-1)}-1} k^{q^{s(t-1)}-1}$, and hence  $$\alpha^{q^{st}}=\alpha h^{q^{st}-q^{s(t+1)}} k^{1-q^{s(t+1)}}.$$ Then, $$k^{1-q^{s(2t-1)}}h^{q^{st}-q^{s(2t-1)}}=h^{q^{st}-q^{s(t+1)}} k^{1-q^{s(t+1)}},$$ that reads again as $(hk)^{q^{s(t-2)}-1}=1$. Setting $\lambda;=hk$, it has the same properties as before.

Now, by the second equation of System \eqref{casee1}, we obtain

\begin{eqnarray*}
    m/\mu&=&-h^{1-q^{s(2t-1)}}\alpha^{q^{s(2t-1)}} k^{1-q^{s}}\alpha^{-\qs}\\&=&-h^{1-q^{s(2t-1)}}\alpha^{q^{s(t-1)}} h^{q^{s(2t-1)}-1} k^{q^{s(t-1)}-1} k^{1-q^{s}}\alpha^{-\qs}\\&=&-\alpha^{q^{s(t-1)}-\qs}  k^{q^{s(t-1)}-q^{s}}\\&=&-\big(\alpha^\qs k^\qs\big)^{q^{s(t-2)}-1}.
\end{eqnarray*}
Finally, note that $(\alpha^{q^s}k^\qs)^{q^{st}}=\alpha^\qs k^{\qs-1}h^{q^{s(t+1)}-1}k^{q^{s(t+1)}}=\lambda^{q^{3s}+1}\alpha^\qs k^\qs$, 
so $m/\mu=-z^{q^{s(t-2)}-1}$ with $z^{q^{st}}=\lambda^{q^{3s}-1}z$.
This concludes the proof.
\end{proof}

\end{document}